\newtheorem{theorem}{Theorem}[section]
\newtheorem{lemma}{Lemma}[section]
\newtheorem{corollary}{Corollary}[section]
\newtheorem{proposition}{Proposition}[section]
\newtheorem{remark}{Remark}[section]
\newtheorem{example}{Example}[section]
\newcounter{theor}
\newtheorem{thm}[theor]{Theorem}
\DeclareMathOperator{\inter}{int}
\def\conv{\mathop\mathrm{conv}\nolimits}
\def\bd{\mathop\mathrm{bd}\nolimits}
\def\supp{\mathop\mathrm{supp}\nolimits}
\def\s{\mathbb{S}}
\def\K{\mathcal{K}}
\def\R{\mathbb{R}}
\def\N{\mathbb{N}}
\def\vol{\mathrm{vol}}
\def\c{\mathcal{C}}
\def\G{\mathrm{G}}
\newcommand{\dlat}{\mathrm{d}}
\DeclareMathOperator*{\esssup}{ess\,sup}
\numberwithin{equation}{section}
\begin{document}
\title{On Rogers-Shephard type inequalities for general measures}

\author[D. Alonso]{David Alonso-Guti\'errez}
\address{Departamento de Matem\'aticas,
Universidad de Zaragoza, 50009-Zaragoza, Spain} \email{alonsod@unizar.es}

\author[M. A. Hern\'andez]{Mar\'\i a A. Hern\'andez Cifre}
\address{Departamento de Matem\'aticas, Universidad de Murcia, Campus de
Espinar\-do, 30100-Murcia, Spain} \email{mhcifre@um.es} \email{jesus.yepes@um.es}

\author[M. Roysdon]{Michael Roysdon}
\address{Department of Mathematical Sciences, Kent State University, Kent,
OH USA} \email{mroysdon@kent.edu} \email{zvavitch@math.kent.edu}

\author[J. Yepes]{Jes\'us Yepes Nicol\'as}

\author[A. Zvavitch]{Artem Zvavitch}

\thanks{First author is supported by MINECO/FEDER project MTM2016-77710-P.
Second and fourth authors are supported by MINECO/FEDER project
MTM2015-65430-P and ``Programa de Ayudas a Grupos de Excelencia de
la Regi\'on de Murcia'', Fundaci\'on S\'eneca, 19901/GERM/15.
Third and fifth authors are supported in part by the U.S. National
Science Foundation Grant DMS-1101636. Fifth author is supported in
part by la Comue Universit\'e Paris-Est.}

\subjclass[2010]{Primary 52A40, 28A25; Secondary 52A20}

\keywords{Rogers-Shephard type inequalities, quasi-concave
density, radially decreasing density, functional inequalities}

\begin{abstract}
In this paper we prove a series of  Rogers-Shephard type inequalities for
convex bodies when dealing with measures on the Euclidean space with
either radially decreasing densities, or quasi-concave densities attaining
their maximum at the origin. Functional versions of classical
Rogers-Shephard  inequalities are also derived as consequences of our
approach.
\end{abstract}

\maketitle

\section{Introduction and main results}

We denote the length of a vector $x \in \R^n$ by $|x|$. We represent by
$B_n=\bigl\{x\in\R^n:|x|\leq 1\bigr\}$ the $n$-dimensional Euclidean unit
ball, by $\s^{n-1}$ its boundary, and $\sigma$ will denote the standard
surface area measure on $\s^{n-1}$. The $n$-dimensional volume of a
measurable set $M\subset\R^n$, i.e., its $n$-dimensional Lebesgue measure,
is denoted by $\vol(M)$ or $\vol_n(M)$ if the distinction of the dimension
is useful (when integrating, as usual, $\dlat x$ will stand for $\dlat
\vol(x)$). With $\inter M$, $\bd M$ and $\conv M$ we denote the interior,
boundary and convex hull of $M$, respectively, and we set $[x,y]$ for
$\conv\{x,y\}$, $x,y\in\R^n$. The set of all $i$-dimensional linear
subspaces of $\R^n$ is denoted by $\G(n,i)$, and for $H\in\G(n,i)$, the
orthogonal projection of $M$ onto $H$ is denoted by $P_HM$. Moreover,
$H^{\bot}\in\G(n,n-i)$ represents the orthogonal complement of $H$.
Finally, let $\K^n$ be the set of all $n$-dimensional convex bodies, i.e.,
compact convex sets with non-empty interior, in $\R^n$. We will frequently
refer to \cite{AGM}, \cite{Ga} and \cite{Sch} for general references for
convex bodies and  their properties.

The Minkowski sum of two non-empty sets $A,B\subset\R^n$ denotes the
classical vector addition of them, $A+B=\{a+b:\, a\in A, \, b\in B\}$, and
we write $A-B$ for $A+(-B)$.

One of the most famous relations involving the volume and the Minkowski addition is
the Brunn-Minkowski inequality (we refer to \cite{G} for an
extensive survey of this inequality). One form of it states that if
$K,L\in\K^n$, then
\begin{equation}\label{e:B-M_ineq}
\vol(K+L)^{1/n}\geq \vol(K)^{1/n}+\vol(L)^{1/n},
\end{equation}
and equality holds if and only if $K$ and $L$ are homothetic.

The Brunn-Minkowski inequality was generalized to different types of
measures, including the case of log-concave measures \cite{Leindler,
Prekopa}, a very powerful generalization to the case of Gaussian measures
\cite{B2, B3, E1,E2, ST}, to $p$-concave measures and many other
extensions (see e.g.~\cite{Borell, BL}). It is interesting to note that it
was proved by Borell \cite{Borell1, Borell} that most of such
generalizations would require a $p$-concavity assumption on the underlined
measure and its density (see \eqref{e:p-concavecondition} below for the
precise definition). Following those works, recently, many classical
results in Convex Geometry were generalized to the case of log-concave
(and in some cases $p$-concave) functions. We mention, among others, the
Blaschke-Santal\'o inequality \cite{AKM,Ba1, FM}, the Bourgain-Milman and
the reverse Brunn-Minkowski inequality \cite{KM}, the general works on
duality and volume \cite{Ba1, Ba2}, as well as the Gr\"unbaum inequality
\cite{MNRY, MSZ} and others \cite{GaZv,LiMaNaZv, Mar, NaTk, RYN}.

In the particular case when $L=-K$, (\ref{e:B-M_ineq}) gives
$$\vol(K-K)\geq 2^n\vol(K),$$ with equality if and only if $K$ is centrally
symmetric, i.e., there exists a point $x \in \R^n$ such that $K-x=-(K-x)$.
An upper bound for the volume of $K-K$ is given by the Rogers-Shephard
inequality, originally proven in \cite[Theorem~1]{RS1}. For more details
about this inequality, we also refer the reader to
\cite[Section~10.1]{Sch} or~\cite{AGM}.

\begin{thm}[The Rogers-Shephard inequality]\label{t:RS}
Let $K\in\K^n$. Then
\begin{equation}\label{e:RS}
\vol(K-K)\leq \binom{2n}{n}\vol(K),
\end{equation}
with equality if and only if $K$ is a simplex.
\end{thm}

Similarly to the Brunn-Minkowski inequality \eqref{e:B-M_ineq},  it is
natural to wonder about the possibility of extending \eqref{e:RS} for
measures associated to certain densities. The most natural candidates
would be the classes of $p$-concave measures. Nevertheless, it was noticed
recently that a number of results in Convex Geometry and Geometric
Tomography can be generalized to a class of measures whose densities have
no concavity assumption. This includes the solution of the Busemann-Petty
problem for general measures \cite{Z}, the Koldobsky slicing inequality
\cite{Kol, KoZ, KK, KLi}, as well as Shephard's problem for general
measures \cite{Liv}.

First we observe that one cannot expect to obtain
\begin{equation}\label{e:RS_not_true}
\mu(K-K)\leq \binom{2n}{n}\mu(K)
\end{equation}
without having certain control on the `position' of the body $K$. Indeed,
it is enough to consider the standard $n$-dimensional Gaussian measure
$\gamma_n$ given~by
\[
\dlat\gamma_n(x)=\frac{1}{(2\pi)^{n/2}}e^{\frac{-|x|^2}{2}}\dlat x,
\]
and $K=x+B_n$ for $|x|$ large enough. In this case it is clear that
$\gamma_n(K-K)=\gamma_n(2B_n)>0$, whereas $\gamma_n(K)$ can be arbitrarily
small.

One option to get control, on the right-hand side of \eqref{e:RS_not_true}
might be to exchange $\mu(K)$ with a mean of the measures of all the
translated copies of $K$ with respect to $-K$. To this end, given a
measure $\mu$ on $\R^n$, we define its \emph{translated-average}
$\overline{\mu}$ as
\begin{equation*}
\overline{\mu}(K)=\dfrac{1}{\vol(K)}\int_{K}\mu(-y+K)\,\dlat y,
\end{equation*}
for any $K\in\K^n$. With this notion, our first main result reads as
follows.
\begin{theorem}\label{t:RS_measures_rad_decreasing}
Let $K\in\K^n$. Let $\mu$ be a measure on $\R^n$ given by
$\dlat\mu(x)=\phi(x)\,\dlat x$, where $\phi:\R^n\longrightarrow[0,\infty)$
is radially decreasing. Then
\begin{equation}\label{e:RS_measures_rad_decreasing}
\mu(K-K)\leq
\binom{2n}{n}\min\bigl\{\overline{\mu}(K),\overline{\mu}(-K)\bigr\}.
\end{equation}
Moreover, if $\phi$ is continuous at the origin then equality holds in
\eqref{e:RS_measures_rad_decreasing} if and only if $\mu$ is a constant
multiple of the Lebesgue measure on $K-K$ and $K$ is a simplex.
\end{theorem}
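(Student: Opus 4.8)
The plan is to reduce the inequality to a one-dimensional weighted estimate along rays, after rewriting the right-hand side through the covariogram of $K$. Set $g(z)=\vol\bigl(K\cap(z+K)\bigr)$. Expanding $\mu(-y+K)=\int_K\phi(w-y)\,\dlat w$ and applying Fubini together with the change of variables $z=w-y$ (and using the symmetry $g(z)=g(-z)$ of the covariogram), one obtains the identity
\[
\vol(K)\,\overline{\mu}(K)=\int_{K-K}\phi(z)\,g(z)\,\dlat z .
\]
Since $\mu(K-K)=\int_{K-K}\phi(z)\,\dlat z$, the claimed bound \eqref{e:RS_measures_rad_decreasing} involving $\overline{\mu}(K)$ is equivalent to $\vol(K)\int_{K-K}\phi\,\dlat z\le\binom{2n}{n}\int_{K-K}\phi\,g\,\dlat z$. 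I would then record the three standard properties of the covariogram: $g(0)=\vol(K)=\max g$, $\supp g=K-K$, and, by the Brunn--Minkowski inequality \eqref{e:B-M_ineq}, that $g^{1/n}$ is concave on $K-K$.

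Passing to polar coordinates, it suffices to prove, for each fixed $\theta\in\s^{n-1}$, the one-dimensional inequality
\[
\vol(K)\int_0^{\rho(\theta)}\psi(r)\,r^{n-1}\,\dlat r\le\binom{2n}{n}\int_0^{\rho(\theta)}\psi(r)\,c(r)^n\,r^{n-1}\,\dlat r ,
\]
where $\rho(\theta)$ is the radial function of $K-K$, $c(r)=g(r\theta)^{1/n}$ and $\psi(r)=\phi(r\theta)$; the hypothesis that $\phi$ is radially decreasing is exactly the statement that each $\psi$ is non-increasing. Since $c$ is concave with $c(\rho(\theta))=0$ and $c(0)^n=\vol(K)$, it lies above its chord, $c(r)\ge c(0)\bigl(1-r/\rho(\theta)\bigr)$, and as $\psi\ge0$ this lets me replace $c$ by the linear profile; after cancelling $c(0)^n=\vol(K)$ it remains to show
\[
\int_0^{\rho(\theta)}\psi(r)\,r^{n-1}\,\dlat r\le\binom{2n}{n}\int_0^{\rho(\theta)}\psi(r)\,\bigl(1-r/\rho(\theta)\bigr)^n r^{n-1}\,\dlat r .
\]

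The heart of the matter is this last estimate, and this is where monotonicity of $\psi$ is used. A Beta-function computation shows that for $\psi\equiv1$ the two sides coincide, i.e. $\int_0^{\rho(\theta)}h(r)\,\dlat r=0$ for the kernel $h(r)=r^{n-1}\bigl[1-\binom{2n}{n}(1-r/\rho(\theta))^n\bigr]$. Moreover $h$ changes sign exactly once, from negative to positive, since the bracket increases from $1-\binom{2n}{n}<0$ at $r=0$ to $1>0$ at $r=\rho(\theta)$. For non-increasing $\psi\ge0$ one then gets $\int\psi\,h\le\psi(r_0)\int h=0$, where $r_0$ is the sign-change point: on $[0,r_0]$ one has $h\le0$ and $\psi\ge\psi(r_0)$, while on $[r_0,\rho(\theta)]$ one has $h\ge0$ and $\psi\le\psi(r_0)$, so in both ranges $\psi h\le\psi(r_0)h$. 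Integrating back over $\s^{n-1}$ yields the bound with $\overline{\mu}(K)$, and applying the result to $-K$ (which has the same difference body) produces the bound with $\overline{\mu}(-K)$, hence the minimum.

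For the equality case I would trace the two inequalities backwards. Equality in the chord step forces $g^{1/n}$ to be affine along almost every ray through the origin on which $\phi>0$; this is precisely the Rogers--Shephard extremal condition and yields that $K$ is a simplex. Equality in the kernel comparison forces $\psi$ to be constant on each ray wherever $h\ne0$, that is $\phi$ constant on $\inter(K-K)$, so that $\mu$ is a constant multiple of Lebesgue measure on $K-K$. Continuity of $\phi$ at the origin is what upgrades ``constant away from the vertex'' to genuinely constant, ruling out a spurious spike near $0$ where the kernel $h$ degenerates (its $r^{n-1}$ factor vanishes). I expect the main obstacle to be the rigorous handling of this mean-zero, single-sign-change kernel comparison and the careful bookkeeping in the equality analysis, rather than any single delicate estimate.
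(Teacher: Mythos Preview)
Your argument is correct and structurally identical to the paper's: both rewrite $\vol(K)\,\overline{\mu}(\pm K)$ as $\int_{K-K}\phi\,g$ via Fubini and the covariogram $g(z)=\vol\bigl(K\cap(z+K)\bigr)$, pass to polar coordinates, use the $(1/n)$-concavity of $g$ to replace it by the linear profile along each ray, and then invoke a one-dimensional weighted inequality. The only substantive difference is in the proof of that one-dimensional estimate. The paper isolates it as Lemma~\ref{l:F_nonposit} and proves it by a monotonicity argument: setting $F(x)=\binom{2n}{n}^{-1}\int_0^x t^{n-1}\phi(t)\,\dlat t-\int_0^x(1-t/x)^n t^{n-1}\phi(t)\,\dlat t$, one checks $F(0^+)=0$ and $F'\le0$ almost everywhere. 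Your single-sign-change kernel argument (the bracket $1-\binom{2n}{n}(1-r/\rho)^n$ is increasing with a unique zero, the kernel $h$ has mean zero by a Beta computation, and a decreasing weight against such a kernel gives a nonpositive integral) is an equally valid and arguably more transparent route to the same inequality, and its equality case (constancy of $\psi$) drops out in the same way.

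One small point on the equality discussion: your two deductions are correct but should be chained in the other order. From the chord step alone you only learn that $g^{1/n}$ is affine on rays where $\phi>0$, which does not by itself pin down $K$. The paper instead first reads off, from equality in the one-dimensional lemma, that $\phi$ is constant along $\sigma$-almost every ray; continuity at the origin then makes $\mu$ a constant multiple of Lebesgue on $K-K$, and the classical Rogers--Shephard equality case (Theorem~\ref{t:RS}) finishes the job. Your ingredients are the same, just reorder them.
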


A function $\phi:\R^n\longrightarrow[0,\infty)$ is said to be radially
decreasing if $\phi(tx)\geq \phi(x)$ for any $t\in[0,1]$ and any point
$x\in\R^n$.

A lower bound for $\mu(K-K)$ when the density function of $\mu$ is even
and $p$-concave (see the definition below), $p\geq -1/n$, can be directly
obtained from the results by Borell and Brascamp-Lieb \cite{Borell,BL}:
\begin{equation}\label{e:B-M(K-K)}
\mu(K-K)\geq\mu(2K).
\end{equation}
Here we extend \eqref{e:B-M(K-K)} to the case of measures with
even and quasi-concave densities (see Theorem
\ref{t:R-S_reverse}).

We recall that a function $\phi:\R^n\longrightarrow[0,\infty)$ is
$p$-concave, for $p\in\R\cup\{\pm\infty\}$, if
\begin{equation}\label{e:p-concavecondition}
\phi\bigl((1-\lambda)x+\lambda y\bigr)\geq
M_p\bigl(\phi(x),\phi(y),\lambda\bigr)
\end{equation}
for all $x,y\in\R^n$ and any $\lambda\in(0,1)$. Here $M_p$ denotes the
{\em $p$-mean} of two non-negative numbers:
\[
M_p(a,b,\lambda)=\left\{
\begin{array}{ll}
\bigl((1-\lambda)a^p+\lambda b^p\bigr)^{1/p}, & \text{ if }p\neq 0,\pm\infty,\\[1mm]
a^{1-\lambda}b^\lambda & \text{ if }p=0,\\[1mm]
\max\{a,b\} & \text{ if }p=\infty,\\[1mm]
\min\{a,b\} & \text{ if }p=-\infty;
\end{array}\right.
\]
for $ab>0$;  $M_p(a,b,\lambda)=0$, when $ab=0$ and
$p\in\R\cup\{\pm\infty\}$. A $0$-concave function is usually called
\emph{log-concave} whereas a $(-\infty)$-concave function is called
\emph{quasi-concave}. Quasi-concavity is equivalent to the fact that the
superlevel sets
\begin{equation}\label{e:ct(phi)}
\c_t(\phi)=\bigl\{x\in\supp\phi:\phi(x)\geq t\|\phi\|_{\infty}\bigr\}
\end{equation}
are convex for $t\in [0,1]$. Here $\supp\phi$ denotes the support of
$\phi$, i.e., the closure of the set $\bigl\{x\in\R^n:\phi(x)>0\bigr\}$,
and with $\|\cdot\|_{\infty}$ we mean
\[
\|\phi\|_{\infty}=\esssup_{x\in\R^n}\phi(x)=\inf\Bigl\{t\in\R:\vol\bigl(\{x\in\R^n:\phi(x)>t\}\bigr)=0\Bigr\}.
\]
We notice that if $\phi$ is $p$-concave, then $\supp\phi$ is a closed
convex set. Furthermore, if a function $\phi$ is quasi-concave and such
that $\max_{x\in\R^n}\phi(x)=\phi(0)$ then it is radially decreasing.

Although the Rogers-Shephard inequality \eqref{e:RS} has been recently
extended to the functional setting (see e.g.~\cite{AAGJV,AlGMJV,Co} and
the references therein), there seems to be no direct way to derive
inequality \eqref{e:RS_measures_rad_decreasing} from the above-mentioned
functional versions just by considering the function $\chi_{_K}\,\phi$,
where $\phi$ is the density of the given measure, and $\chi_{_K}$ is the
characteristic function of a convex body $K$ (see
Remark~\ref{r:functional_NO_RS}). More precisely, in \cite[Theorems~4.3
and~4.5]{Co}, Colesanti extended \eqref{e:RS} to the more general
functional inequality
\begin{equation}\label{e:Colesanti}
\int_{\R^n}\sup_{x=x_1+x_2}\bigl(f(x_1)^p+f(-x_2)^p\bigr)^{1/p}\,\dlat
x\leq  \binom{2n}{n}\int_{\R^n} f(x)\,\dlat x,
\end{equation}
for any $p$-concave integrable function, with $p\in[-\infty,0)$. Here, the
case $p=-\infty$ has to be understood as
$\min\bigl\{f(x_1),f(-x_2)\bigr\}$. In Section \ref{s:radial_decay} we
will also generalize \eqref{e:Colesanti} to general measures (see
Theorem~\ref{t:pquasitheorem}).

In \cite{RS2}, in addition to $K-K$, Rogers and Shephard considered  two
other centrally symmetric convex bodies associated with $K$. The first one
is
\[
CK=\bigl\{(x,\theta)\in\R^{n+1}:\,
x\in(1-\theta)K+\theta(-K),\,\theta\in[0,1]\bigr\},
\]
whose volume is given~by
\[
\vol_{n+1}(CK)=\int_0^1\vol\bigl((1-\theta)K+\theta(-K)\bigr)\,\dlat\theta.
\]
The second one is just $\conv\bigl(K\cup(-K)\bigr)$. The relation of the
volumes of $CK$ and $\conv\bigl(K\cup(-K)\bigr)$ to the volume of $K$ was
proved in \cite{RS2}:
\begin{thm}\label{t:RS_CK}
Let $K\in\K^n$ be a convex body containing the origin. Then
\begin{equation}\label{e:RS_CK}
\int_0^1\vol\bigl((1-\theta)K+\theta(-K)\bigr)\,\dlat\theta
\leq\frac{2^n}{n+1}\,\vol(K),
\end{equation}
with equality if and only if $K$ is a simplex. Moreover,
\begin{equation}\label{e:RS_conv}
\vol\Bigl(\conv\bigl(K\cup(-K)\bigr)\Bigr)\leq2^n\,\vol(K),
\end{equation}
with equality if and only if $K$ is a simplex with the origin as a vertex.
\end{thm}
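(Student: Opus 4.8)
\emph{Overview.} The plan is to deduce both inequalities from a single \emph{Rogers--Shephard integral lemma} applied to fiber-volume functions of suitable convex bodies. The lemma I will use is: if $C\in\K^m$ and $f\colon C\to[0,\infty)$ is such that $f^{1/k}$ is concave on $C$ (for some $k>0$) and $f$ attains its maximum at some point of $C$, then
\begin{equation*}
\vol_m(C)\,\max_C f\ \le\ \binom{m+k}{m}\int_C f(z)\,\dlat z .
\end{equation*}
This follows in one line from the layer-cake formula together with the inclusion $\{f\ge t\}\supseteq z_0+\bigl(1-(t/M)^{1/k}\bigr)(C-z_0)$, valid for any maximizer $z_0$ of $f$ (here $M=\max_C f$), which is immediate from the concavity of $f^{1/k}$; integrating $\vol_m(\{f\ge t\})\ge\bigl(1-(t/M)^{1/k}\bigr)^m\vol_m(C)$ over $t\in[0,M]$ and evaluating a Beta integral produces exactly the constant $\binom{m+k}{m}$.

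\emph{Proof of \eqref{e:RS_CK}.} I would introduce the auxiliary convex body
\begin{equation*}
\Omega=\bigl\{(x,v,\theta)\in\R^n\times\R^n\times\R:\ v\in\theta K,\ x+v\in(1-\theta)K,\ \theta\in[0,1]\bigr\}\subset\R^{2n+1},
\end{equation*}
which is convex since each defining condition asks that $(v,\theta)$ and $(x+v,1-\theta)$ lie in the cone $\{(w,s):w\in sK,\ s\ge 0\}$ over $K$. The linear projection $p(x,v,\theta)=(x,\theta)$ maps $\Omega$ onto $CK$, with fiber over $(x,\theta)$ equal to $\{v:v\in\theta K,\ x+v\in(1-\theta)K\}$, so the fiber-volume function is $F(x,\theta)=\vol_n\bigl(\theta K\cap((1-\theta)K-x)\bigr)$. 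A standard consequence of the Brunn--Minkowski inequality for fiber volumes of convex bodies shows that $F^{1/n}$ is concave on $CK$. Fubini's theorem gives $\int_{CK}F=\vol_{2n+1}(\Omega)=\int_0^1\theta^n(1-\theta)^n\vol(K)^2\,\dlat\theta=\frac{(n!)^2}{(2n+1)!}\vol(K)^2$, while $F(x,\theta)\le\min\{\theta,1-\theta\}^n\vol(K)$ with equality at $(x,\theta)=(0,1/2)$ (where $\tfrac12 K\cap\tfrac12 K=\tfrac12 K$), so $\max_{CK}F=2^{-n}\vol(K)$. Applying the lemma with $m=n+1$, $k=n$, and using $\binom{2n+1}{n+1}\frac{(n!)^2}{(2n+1)!}=\frac{1}{n+1}$, I obtain $\vol_{n+1}(CK)\,2^{-n}\vol(K)\le\frac{1}{n+1}\vol(K)^2$, which is \eqref{e:RS_CK}.

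\emph{Proof of \eqref{e:RS_conv}.} I would combine \eqref{e:RS_CK} with one further application of the same lemma, now to $CK$ itself. Projecting $CK$ onto $\R^n$ (forgetting $\theta$) gives $P_{\R^n}(CK)=\conv\bigl(K\cup(-K)\bigr)$, with one-dimensional fibers; the width function $w(x)=\vol_1\{\theta:(x,\theta)\in CK\}$ is concave on $\conv(K\cup(-K))$, satisfies $w\le 1$ and $w(0)=1$ (as $0\in K$), and $\int w=\vol_{n+1}(CK)$. The lemma with $m=n$, $k=1$ then yields $\vol_n\bigl(\conv(K\cup(-K))\bigr)\le\binom{n+1}{n}\vol_{n+1}(CK)=(n+1)\vol_{n+1}(CK)$, and combining this with \eqref{e:RS_CK} gives $\vol_n(\conv(K\cup(-K)))\le 2^n\vol(K)$.

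\emph{Equality and the main difficulty.} The inequalities themselves are now immediate; I expect the delicate part to be the equality characterizations. Equality in the lemma forces the relevant $f^{1/k}$ to be \emph{conical}, i.e.\ affine along every ray issuing from a maximizer and vanishing on the boundary, equivalently all superlevel sets must be homothetic copies of $C$ shrinking to a point. Tracing this back through the equality case of Brunn--Minkowski (which makes the fibers $\theta K\cap((1-\theta)K-x)$ homothetic as $(x,\theta)$ varies) should force $K$ to be a simplex, yielding equality in \eqref{e:RS_CK}; a direct one-dimensional computation confirms conversely that every simplex is extremal. For \eqref{e:RS_conv} one needs equality in \emph{both} applications of the lemma at once, and the additional rigidity coming from $w$ being conical pins down the position of the simplex, forcing the origin to be one of its vertices. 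Converting these qualitative conditions into the precise stated characterizations is the step I anticipate will require the most care.
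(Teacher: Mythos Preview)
Your proposal is correct and follows essentially the same route as the classical Rogers--Shephard argument. The paper itself does not prove this theorem---it is quoted as a known result from \cite{RS2}---but the paper's proof of its measure-theoretic generalization (Theorem~\ref{t:RS_CK_conv_hull_rad_dec}) uses exactly your strategy: the same $(1/n)$-concave function $f(x,\theta)=\vol\bigl((1-\theta)K\cap(x+\theta K)\bigr)$ on $CK$, the same comparison of $\int_{CK}f$ (computed by Fubini) against $\max f\cdot\vol(CK)$ via the radial-concavity inequality that you call the ``Rogers--Shephard integral lemma'' (in the paper this is the Lebesgue case of Lemma~\ref{l:F_nonposit}), and the same deduction of \eqref{e:RS_conv} from \eqref{e:RS_CK} through the projection--section inequality applied to $CK$ with one-dimensional fibers (your width-function argument is precisely Theorem~\ref{t:RS_section_proy} with $k=1$). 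Your handling of the equality cases is only a sketch, but the paper likewise defers those characterizations to the original source.
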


Here we will show an analog of the above result in the setting of measures
with radially decreasing density:
\begin{theorem}\label{t:RS_CK_conv_hull_rad_dec}
Let $K\in\K^n$ be a convex body containing the origin and let $\mu$ be a
measure on $\R^n$ given by $\dlat\mu(x)=\phi(x)\,\dlat x$, where
$\phi:\R^n\longrightarrow[0,\infty)$ is radially decreasing. Then
\begin{equation}\label{e:R-S_mu_CK}
\int_0^1\mu\bigl((1-\theta)K+\theta(-K)\bigr)\,\dlat\theta
\leq\frac{2^n}{n+1}\,\sup_{\substack{y\in
K\\\theta\in(0,1]}}\frac{\mu\bigl((1-\theta)y-\theta K\bigr)}{\theta^n}
\end{equation}
and
\begin{equation}\label{e:R-S_mu_conv}
\mu\Bigl(\conv\bigl(K\cup(-K)\bigr)\Bigr)\leq 2^n\,\sup_{\substack{y\in
K\\\theta\in(0,1]}}\frac{\mu\bigl((1-\theta)y-\theta K\bigr)}{\theta^n}.
\end{equation}
Moreover, if $\phi$ is continuous at the origin then equality holds in
\eqref{e:R-S_mu_CK} if and only if $\mu$ is a constant multiple of the
Lebesgue measure on $\conv\bigl(K\cup(-K)\bigr)$ and $K$ is a simplex, and
equality holds in \eqref{e:R-S_mu_conv} if and only if $\mu$ is a constant
multiple of the Lebesgue measure on $\conv\bigl(K\cup(-K)\bigr)$ and $K$
is a simplex with the origin as a vertex.
\end{theorem}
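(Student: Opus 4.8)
The plan is to exploit the fact that, although the right–hand sides of \eqref{e:R-S_mu_CK} and \eqref{e:R-S_mu_conv} are written as suprema, for a radially decreasing density they collapse to one explicit number. Set
\[
M:=\sup_{\substack{y\in K\\\theta\in(0,1]}}\frac{\mu\bigl((1-\theta)y-\theta K\bigr)}{\theta^n}.
\]
I would first prove that $M=\|\phi\|_{\infty}\vol(K)$. For the bound $M\le\|\phi\|_\infty\vol(K)$ the substitution $z=(1-\theta)y-\theta a$, whose Jacobian is $\theta^{n}$, gives
\[
\frac{\mu\bigl((1-\theta)y-\theta K\bigr)}{\theta^n}=\int_{K}\phi\bigl((1-\theta)y-\theta a\bigr)\,\dlat a\le\|\phi\|_{\infty}\vol(K)
\]
for every admissible $y,\theta$. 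For the reverse inequality I would take $y=0\in K$ and let $\theta\downarrow0$: since $\phi$ is radially decreasing the integrand $\phi(-\theta a)$ is nondecreasing in $1/\theta$ and tends, for almost every $a$, to the essential value of $\phi$ at the origin, which equals $\esssup\phi=\|\phi\|_\infty$. Monotone convergence then yields $\theta^{-n}\mu(-\theta K)=\int_K\phi(-\theta a)\,\dlat a\to\|\phi\|_{\infty}\vol(K)$, so that $M\ge\|\phi\|_\infty\vol(K)$, and hence $M=\|\phi\|_{\infty}\vol(K)$.

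With this identification both inequalities reduce to the classical ones of Theorem \ref{t:RS_CK}. Because $\phi\le\|\phi\|_\infty$ almost everywhere, any measurable set $S$ satisfies $\mu(S)\le\|\phi\|_\infty\vol(S)$. Applying this with $S=(1-\theta)K-\theta K$, integrating in $\theta$, and invoking \eqref{e:RS_CK} gives
\[
\int_0^1\mu\bigl((1-\theta)K-\theta K\bigr)\,\dlat\theta\le\|\phi\|_\infty\int_0^1\vol\bigl((1-\theta)K-\theta K\bigr)\,\dlat\theta\le\frac{2^n}{n+1}\|\phi\|_\infty\vol(K)=\frac{2^n}{n+1}M,
\]
which is \eqref{e:R-S_mu_CK}. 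Similarly $\mu\bigl(\conv(K\cup(-K))\bigr)\le\|\phi\|_\infty\vol\bigl(\conv(K\cup(-K))\bigr)$, and \eqref{e:RS_conv} bounds the last quantity by $2^n\|\phi\|_\infty\vol(K)=2^nM$, giving \eqref{e:R-S_mu_conv}. (If $\|\phi\|_\infty=\infty$ both right–hand sides are infinite and there is nothing to prove.)

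For the equality analysis I would trace equality through the two steps just used. The only non-classical step is $\mu(S)\le\|\phi\|_\infty\vol(S)$, which is an equality precisely when $\phi=\|\phi\|_\infty$ almost everywhere on $S$, that is, when $\mu$ is a constant multiple of Lebesgue measure on $S$. For \eqref{e:R-S_mu_CK} this must hold for almost every $\theta$ with $S=(1-\theta)K-\theta K$; since $\bigcup_{\theta\in[0,1]}\bigl((1-\theta)K-\theta K\bigr)=\conv(K\cup(-K))$, it forces $\mu$ to be a constant multiple of Lebesgue measure on $\conv(K\cup(-K))$, while equality in \eqref{e:RS_CK} forces $K$ to be a simplex. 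For \eqref{e:R-S_mu_conv} the same density condition appears on $S=\conv(K\cup(-K))$, together with equality in \eqref{e:RS_conv}, which characterises simplices having the origin as a vertex. The continuity of $\phi$ at the origin guarantees $\|\phi\|_\infty=\phi(0)$, so that $M$ is genuinely attained in the limit and the density condition is exactly the stated one.

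I expect the main obstacle to lie not in the inequalities themselves—they follow immediately once $M=\|\phi\|_\infty\vol(K)$ is known—but in two technical points: establishing the identity $M=\|\phi\|_\infty\vol(K)$ for a merely measurable radially decreasing $\phi$ (the monotone–convergence step and the computation of the essential value of $\phi$ at the origin), and the bookkeeping that upgrades "equality on almost every slice $(1-\theta)K-\theta K$" to "equality on the whole convex hull $\conv(K\cup(-K))$".
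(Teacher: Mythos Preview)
Your proposal has a genuine gap: the identity $M=\|\phi\|_{\infty}\vol(K)$ is false in general, and without it your reduction to the classical Theorem~\ref{t:RS_CK} only yields the weaker bound $\frac{2^n}{n+1}\|\phi\|_{\infty}\vol(K)$, not $\frac{2^n}{n+1}M$. Your monotone--convergence step with $y=0$ actually computes
\[
\lim_{\theta\to 0^+}\int_K\phi(-\theta a)\,\dlat a=\int_K\Bigl(\lim_{r\to 0^+}\phi(-ra/|a|\cdot|a|)\Bigr)\dlat a,
\]
and for a radially decreasing $\phi$ the radial limit $\lim_{r\to0^+}\phi(ru)$ may depend on the direction $u$; it need not equal $\|\phi\|_{\infty}$ for a.e.\ $a$. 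Concretely, in $\R^2$ take $C=\{|x_2|<\epsilon x_1\}$ a thin open cone, set $\phi=1+\chi_{C}$ (with $\phi(0)=2$), and let $K$ be the left half--disc $\{x_1\le 0,\ |x|\le 1\}$. Then $\phi$ is radially decreasing, $\|\phi\|_{\infty}=2$, yet for every $y\in K$ and $\theta\in(0,1]$ the translated cone $\frac{1-\theta}{\theta}y-C$ has apex in $\{v_1\le 0\}$ and intersects $K$ in a sliver of area at most $\epsilon$. Hence
\[
M=\sup_{y,\theta}\int_K\phi\bigl((1-\theta)y-\theta a\bigr)\,\dlat a\le \vol(K)+\epsilon<2\vol(K)=\|\phi\|_{\infty}\vol(K).
\]
So Theorem~\ref{t:RS_CK_conv_hull_rad_dec} is a genuinely sharper statement than the one you prove, and the paper's remark that $M\le\|\phi\|_{\infty}\vol(K)$ is an inequality, not an equality.

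The paper's argument avoids this by never passing through $\|\phi\|_{\infty}\vol(K)$. It works in $\R^{n+1}$ with the $(1/n)$--concave covariogram $f(x,\theta)=\vol\bigl((1-\theta)K\cap(x+\theta K)\bigr)$ on $CK$, and compares $f$ radially (from the point $(0,1/2)$) to the affine profile $g$, then applies Lemma~\ref{l:F_nonposit} directly against the density $\phi(rP_Hu)$. This is exactly where the radial--decay hypothesis on $\phi$ enters and is what produces $M$ rather than $\|\phi\|_{\infty}\vol(K)$ on the right--hand side. Your approach discards this coupling between $f$ and $\phi$ at the very first step $\mu(S)\le\|\phi\|_{\infty}\vol(S)$, which is why it cannot recover the sharper constant.
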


We note that the upper bounds in Theorem \ref{t:RS_CK_conv_hull_rad_dec}
are bounded and can be restated using $\|\phi\|_{\infty}\vol(K)$; indeed,
$\mu\bigl((1-\theta)y-\theta K\bigr)/\theta^n$ is bounded from above by
$\|\phi\|_{\infty}\vol(K)$.

In \cite[Theorem~1]{RS2}, Rogers and Shephard also gave the following
lower bound for the volume of $K$ in terms of the volumes of a projection
and a maximal section of $K$:
\begin{thm}\label{t:RS_section_proy}
Let $k\in\{1,\dots,n-1\}$, $H\in\G(n,n-k)$ and $K\in\K^n$. Then
\begin{equation}\label{e:RS_section_proy}
\vol_{n-k}\bigl(P_HK\bigr)\max_{x_0\in H}
\vol_k\bigl(K\cap\bigl(x_0+H^{\bot}\bigr)\bigr)\leq\binom{n}{k}\vol(K).
\end{equation}
\end{thm}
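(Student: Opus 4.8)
The plan is to reduce \eqref{e:RS_section_proy} to a one-variable concavity estimate for the function measuring the volumes of the $k$-dimensional fibres of $K$, and then to integrate a conical lower bound coming from Brunn's theorem. Fix $H\in\G(n,n-k)$, write each $x\in\R^n$ as $x=u+v$ with $u\in H$ and $v\in H^{\bot}$, and set $m=n-k$. For $u\in P_HK$ denote by
\[
K_u=\bigl\{v\in H^{\bot}:u+v\in K\bigr\}
\]
the fibre of $K$ over $u$, a ($k$-dimensional, possibly lower-dimensional) convex body, and put $g(u)=\vol_k(K_u)$. By Fubini's theorem $\vol(K)=\int_{P_HK}g(u)\,\dlat u$, while $M:=\max_{x_0\in H}\vol_k\bigl(K\cap(x_0+H^{\bot})\bigr)=\max_{u\in P_HK}g(u)$. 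Thus \eqref{e:RS_section_proy} is equivalent to
\[
\int_{P_HK}g(u)\,\dlat u\geq\frac{1}{\binom{n}{k}}\,\vol_m(P_HK)\,M,
\]
so the problem becomes a statement about the single function $g$ on the convex body $P:=P_HK\subset H\cong\R^m$.

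Geometrically, the right-hand side above is exactly $\vol_n\bigl(\conv(P\cup S)\bigr)$, where $S$ is a maximal section placed in $H^{\bot}$ through the origin: a direct computation with the Minkowski functionals of $P$ and $S$ shows that the join of an $m$-dimensional and a $k$-dimensional body in complementary subspaces has volume $\binom{n}{k}^{-1}\vol_m(P)\vol_k(S)$. Hence the claim also reads $\vol_n\bigl(\conv(P\cup S)\bigr)\leq\vol(K)$, which is the viewpoint closest to the rest of the paper.

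To carry out the analytic route I would invoke Brunn's theorem. Since $K$ is convex, for $u_1,u_2\in P_HK$ and $\lambda\in[0,1]$ one has $K_{(1-\lambda)u_1+\lambda u_2}\supseteq(1-\lambda)K_{u_1}+\lambda K_{u_2}$, so the Brunn--Minkowski inequality \eqref{e:B-M_ineq} applied in $H^{\bot}$ gives that $g^{1/k}$ is concave on $P$. Let $u_0\in P$ satisfy $g(u_0)=M$, and let $\rho$ be the Minkowski functional of $P-u_0$, so that every $u\in P$ can be written as $u=(1-\rho(u))u_0+\rho(u)w$ with $w\in\bd P$ and $\rho(u)\in[0,1]$. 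Concavity of $g^{1/k}$ then yields the pointwise bound
\[
g(u)^{1/k}\geq(1-\rho(u))\,g(u_0)^{1/k}+\rho(u)\,g(w)^{1/k}\geq(1-\rho(u))\,M^{1/k}.
\]

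Finally I would integrate this conical estimate over $P$ via the radial decomposition adapted to $u_0$. Since the dilate $u_0+r(P-u_0)=\{\rho\leq r\}$ has $m$-volume $r^m\vol_m(P)$, the layer-cake formula gives
\[
\int_P g(u)\,\dlat u\geq M\int_P\bigl(1-\rho(u)\bigr)^k\dlat u=M\,\vol_m(P)\int_0^1(1-r)^k\,m\,r^{m-1}\,\dlat r,
\]
and the last integral equals $m\,B(m,k+1)=\dfrac{m!\,k!}{(m+k)!}=\binom{n}{k}^{-1}$. As $\vol_m(P)=\vol_{n-k}(P_HK)$, this is precisely the required inequality. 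The routine points are Fubini and the Beta integral; the one genuinely structural input is Brunn's theorem, and the only place demanding care is that $g$ may vanish or be lower-dimensional on $\bd P$, which is harmless since the lower bound uses only $g(w)^{1/k}\geq0$.
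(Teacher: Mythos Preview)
Your argument is correct. The paper does not actually prove Theorem~\ref{t:RS_section_proy}: it is quoted from \cite[Theorem~1]{RS2} as a classical result of Rogers and Shephard, and what you have written is essentially their original proof---Brunn's theorem makes the section function $g(u)=\vol_k(K_u)$ $(1/k)$-concave on $P_HK$, the conical lower bound $g(u)\ge M\bigl(1-\rho(u)\bigr)^k$ from a maximum point is integrated radially, and the Beta integral yields $\binom{n}{k}^{-1}$. One minor remark: your geometric aside that the inequality reads $\vol_n\bigl(\conv(P\cup S)\bigr)\le\vol(K)$ is only a heuristic reformulation (the inclusion $\conv(P\cup S)\subset K$ need not hold, since $P_HK$ is generally not a subset of $K$); but you do not use it, and your analytic route is complete.

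It is worth noting that the paper's own machinery recovers the statement in exactly the same way. Applying \eqref{e:int(suppf_C_theta)_g_uno} (or Corollary~\ref{c:|f|_x0} with Lebesgue measure) to the $(1/k)$-concave function $f=g$ on $H$ gives
\[
\alpha^{n-k}_{1/k,0}\,\vol_{n-k}(P_HK)\le\frac{1}{M}\int_{P_HK}g(x)\,\dlat x=\frac{\vol(K)}{M},
\]
with $\alpha^{n-k}_{1/k,0}=\binom{n}{k}^{-1}$. The proof of Proposition~\ref{l:integral ineqs} behind this is your conical bound repackaged as the superlevel-set inclusion $(1-\theta^{1/k})\supp g\subset\c_\theta(g)$, followed by the same Beta integral; so your direct computation and the paper's abstract framework coincide here.
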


In this paper we will show that the above result remains true for products
of measures associated to quasi-concave densities, provided that
$P_HK\subset K$, i.e., $P_HK=K\cap H$. The assumption on the projection is
necessary, as pointed out in Example \ref{r:hip_P_HK}. In particular, this
hypothesis does not allow one to prove
Theorem~\ref{t:RS_CK_conv_hull_rad_dec} by directly following the proof of
Theorem \ref{t:RS_CK} (see \cite[Theorems~2 and~3]{RS2}): there, the
authors constructed a suitable higher dimensional set to which
\eqref{e:RS_section_proy} was applied. This will be not possible here.

Before stating the result, we fix the following notation: given a convex
body $K$ and $x\in P_HK$, we write $K(x)=(K-x)\cap H^{\bot}$. We will use
the definition of superlevel set $\c_t(\phi)$ given by \eqref{e:ct(phi)}.

\begin{theorem}\label{t:RS_secc_proy_K(0)}
Let $k\in\{1,\dots,n-1\}$ and $H\in\G(n,n-k)$. Given a continuous at the
origin and quasi-concave function $\phi_k:\R^k\longrightarrow[0,\infty)$
with $\|\phi_k\|_{\infty}=\phi_k(0)$ and a radially decreasing function
$\phi_{n-k}:\R^{n-k}\longrightarrow[0,\infty)$, let
$\mu_n=\mu_{n-k}\times\mu_{k}$ be the product measure on $\R^n$ given by
$\dlat\mu_{n-k}(x)=\phi_{n-k}(x)\,\dlat x$ and
$\dlat\mu_{k}(y)=\phi_k(y)\,\dlat y$. Let $K\in\K^n$ with $P_HK\subset K$
and so that
$\vol_k\bigl(\c_t(\phi_k)\cap K(x)\bigr)$ attains its maximum
at $x=0$ for every $t\in(0,1)$. Then
\begin{equation}\label{e:RS_secc_proy_K(0)}
\mu_{n-k}\bigl(P_HK\bigr)\mu_k\bigl(K\cap
H^{\bot}\bigr)\leq\binom{n}{k}\mu_n(K).
\end{equation}
\end{theorem}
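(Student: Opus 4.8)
The plan is to combine Fubini's theorem with a layer-cake decomposition of the section density $\phi_k$ by its superlevel sets, reducing the whole statement to a one-parameter family of weighted Rogers--Shephard estimates on the projection $P_HK$, each of which is treated by passing to polar coordinates centred at the origin. First I would use Fubini's theorem for the product measure together with the identity $K(0)=K\cap H^{\bot}$ to write
\[
\mu_n(K)=\int_{P_HK}\mu_k\bigl(K(x)\bigr)\,\phi_{n-k}(x)\,\dlat x,\qquad \mu_k\bigl(K\cap H^{\bot}\bigr)=\mu_k\bigl(K(0)\bigr).
\]
Since $\phi_k$ is quasi-concave with $\|\phi_k\|_{\infty}=\phi_k(0)$, the layer-cake formula and the substitution $s=t\,\phi_k(0)$ give, for every $x\in P_HK$,
\[
\mu_k\bigl(K(x)\bigr)=\phi_k(0)\int_0^1 g_t(x)\,\dlat t,\qquad g_t(x):=\vol_k\bigl(\c_t(\phi_k)\cap K(x)\bigr).
\]
Inserting both expressions into \eqref{e:RS_secc_proy_K(0)}, cancelling the common factor $\phi_k(0)$ and interchanging the order of integration, it suffices to prove, for each fixed $t\in(0,1)$,
\begin{equation}\label{e:reduced}
g_t(0)\int_{P_HK}\phi_{n-k}(x)\,\dlat x\le\binom{n}{k}\int_{P_HK}g_t(x)\,\phi_{n-k}(x)\,\dlat x.
\end{equation}

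To set up \eqref{e:reduced} I would record two structural facts. Writing $K_t:=\{z\in K:\,P_{H^{\bot}}z\in\c_t(\phi_k)\}$, which is convex because $\c_t(\phi_k)$ is, Brunn's theorem applied to $K_t$ shows that $g_t^{1/k}$ is concave on $P_HK_t$; moreover, since $0\in\c_t(\phi_k)$ and $P_HK\subset K$, one checks $P_HK\subseteq P_HK_t$, so $g_t^{1/k}$ is concave on $C:=P_HK$. The standing hypothesis says that $g_t$ attains its maximum on $C$ at the origin. Setting $\rho:=(g_t/g_t(0))^{1/k}$ (the case $g_t(0)=0$ being trivial), the function $\rho$ is concave on $C$ with $\rho(0)=1=\max_C\rho$ and $0\le\rho\le 1$, so \eqref{e:reduced} is exactly the weighted Rogers--Shephard estimate
\[
\int_C\phi_{n-k}(x)\,\dlat x\le\binom{m+k}{k}\int_C\rho(x)^{k}\,\phi_{n-k}(x)\,\dlat x,\qquad m:=n-k .
\]

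The heart of the argument, and the step I expect to be the main obstacle, is this last inequality, where the radially decreasing weight $\phi_{n-k}$ must be carried through simultaneously with the concave profile $\rho$. The key observation is that both $\rho$ and $\phi_{n-k}$ are nonincreasing along every ray emanating from the common centre $0$: for $\phi_{n-k}$ this is the radial-decrease hypothesis, while for $\rho$ it follows from concavity together with $\rho(0)=\max\rho$. Passing to polar coordinates $x=r\omega$ and fixing $\omega\in\s^{m-1}$, I would reduce matters to the one-dimensional inequality
\[
\int_0^{R}w(r)\,r^{m-1}\,\dlat r\le\binom{m+k}{k}\int_0^{R}a(r)^{k}\,w(r)\,r^{m-1}\,\dlat r,
\]
where $a(r)=\rho(r\omega)$ is concave with $a(0)=1$ and $w(r)=\phi_{n-k}(r\omega)$ is nonnegative and nonincreasing, and $R$ is the radial length of $C$ in the direction $\omega$.

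Since $w$ is nonincreasing, its superlevel sets are intervals $[0,\tau]$, so a further layer-cake decomposition of $w$ reduces the displayed one-dimensional inequality to the weight-free statement $\int_0^{\rho^*}r^{m-1}\,\dlat r\le\binom{m+k}{k}\int_0^{\rho^*}a(r)^{k}\,r^{m-1}\,\dlat r$ for every $\rho^*\in[0,R]$. Finally, concavity together with $a(0)=1$ and $a(\rho^*)\ge 0$ yields the chord bound $a(r)\ge 1-r/\rho^*$ on $[0,\rho^*]$, and the Beta-function computation $\int_0^{\rho^*}(1-r/\rho^*)^{k}r^{m-1}\,\dlat r=(\rho^*)^m\,(m-1)!\,k!/(m+k)!$ closes the estimate with exactly the constant $\binom{m+k}{k}=\binom{n}{k}$, with equality precisely when $a$ is affine (the cone/simplex case). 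Reassembling by integrating over $\omega$, then over the levels of $w$ and finally over $t$, gives \eqref{e:RS_secc_proy_K(0)}. The only delicate points are the measurability and interchange of integrals, the verification $P_HK\subseteq P_HK_t$ that lets Brunn's theorem apply on all of $C$, and the reduction to rays, which crucially exploits that the maximum of $g_t$ and the maximum of $\phi_{n-k}$ occur at the \emph{same} point, namely the origin.
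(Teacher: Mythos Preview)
Your proof is correct and follows the same global architecture as the paper's: Fubini for the product measure, layer-cake decomposition of $\phi_k$ via its superlevel sets $\c_t(\phi_k)$, and the observation that $g_t(x)=\vol_k\bigl(\c_t(\phi_k)\cap K(x)\bigr)$ is $(1/k)$-concave on $P_HK$ with maximum at the origin. The reduction to the weighted inequality
\[
g_t(0)\int_{P_HK}\phi_{n-k}(x)\,\dlat x\le\binom{n}{k}\int_{P_HK}g_t(x)\,\phi_{n-k}(x)\,\dlat x
\]
is exactly what the paper obtains (equation \eqref{e:secc1} there).

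Where you diverge is in how you dispatch this last inequality. The paper invokes its Proposition~\ref{l:integral ineqs} (in the form \eqref{e:int(suppf_C_theta)_g_rad_dec}), which is based on the superlevel-set inclusion $(1-\theta^{1/k})\supp g_t\subset\c_\theta(g_t)$ coming from $(1/k)$-concavity, followed by integration in $\theta$ and a change of variables. You instead pass to polar coordinates in $P_HK$, use the equivalent pointwise chord bound $\rho(r\omega)\ge 1-r/\rho^*$, layer-cake the radially decreasing weight $\phi_{n-k}$, and close with the Beta-integral identity. This is precisely the mechanism behind the paper's Lemma~\ref{l:F_nonposit} and the proof of Theorem~\ref{t:RS_measures_rad_decreasing}; in effect you are treating Theorem~\ref{t:RS_secc_proy_K(0)} with the Section~\ref{s:radial_decay} toolkit rather than the Section~\ref{s:functions} one. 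Both routes rest on the same concavity fact, but yours is more self-contained (no auxiliary proposition needed), while the paper's packaging via Proposition~\ref{l:integral ineqs} is reusable for the other results of Sections~\ref{s:functions}--\ref{s:remark} and handles the equality analysis uniformly.
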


The above assumption on the maximal section $K(0)$ of $K$ can be omitted
when the density of the product measure is also quasi-concave, as shown in
Theorem~\ref{t:RS_seccion_proy_quasi}, which is a straightforward
consequence of the following functional version of
\eqref{e:RS_section_proy}.

\begin{theorem}\label{t:functional_RS}
Let $k\in\{1,\dots,n-1\}$ and $H\in\G(n,n-k)$. Let
$f:\R^n\longrightarrow[0,\infty)$ be a bounded quasi-concave function such
that $\vol_k\bigl(\c_t(f)\cap(x+H^{\bot})\bigr)$, $x\in H$, attains its
maximum at $x=0$ for every $t\in(0,1)$, and let
$g:H\longrightarrow[0,\infty)$ be a radially decreasing function. Then,
\begin{equation*}\label{e:proy_sect_f_g}
\int_{H}g(x)P_Hf(x)\,\dlat x\int_{H^{\bot}}f(y)\,\dlat y
\leq\binom{n}{k}\|f\|_{\infty}\int_{\R^n}g(P_Hx)f(x)\,\dlat x.
\end{equation*}
\end{theorem}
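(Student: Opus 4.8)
The plan is to descend to the superlevel sets of $f$, convert each of the three integrals into a one‑parameter integral over levels by the layer‑cake formula, establish a weighted Rogers--Shephard estimate at each individual level, and finally recombine the levels by means of Chebyshev's integral inequality. Write $M=\|f\|_{\infty}$ and, for $s\in[0,M]$, let $K_s=\{x\in\R^n:f(x)\geq s\}$. By quasi-concavity each $K_s$ is convex, the family is nested and decreasing in $s$, and $\c_t(f)=K_{tM}$, so the hypothesis says precisely that $x\mapsto\vol_k\bigl(K_s\cap(x+H^{\bot})\bigr)$, $x\in H$, attains its maximum at $x=0$ for every $s\in(0,M)$. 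Recalling that $P_Hf(x)=\sup\{f(z):z\in\R^n,\ P_Hz=x\}$, whose superlevel sets coincide with the projections $P_HK_s$ up to the boundary, and applying $\phi(x)=\int_0^M\chi_{\{\phi\geq s\}}(x)\,\dlat s$ to $f$ and to $P_Hf$ together with Fubini, the three integrals become
\[
\int_H g\,P_Hf\,\dlat x=\int_0^M a(s)\,\dlat s,\qquad
\int_{H^{\bot}}f\,\dlat y=\int_0^M b(s)\,\dlat s,\qquad
\int_{\R^n}g(P_Hx)f(x)\,\dlat x=\int_0^M c(s)\,\dlat s,
\]
where $a(s)=\int_{P_HK_s}g\,\dlat x$, $b(s)=\vol_k(K_s\cap H^{\bot})$ and $c(s)=\int_{K_s}g(P_Hx)\,\dlat x$. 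It then suffices to bound $\bigl(\int_0^M a\bigr)\bigl(\int_0^M b\bigr)$ by $\binom{n}{k}M\int_0^M c$.

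At each level a further application of Fubini gives $c(s)=\int_{P_HK_s}g(x)F_s(x)\,\dlat x$ with $F_s(x)=\vol_k\bigl(K_s\cap(x+H^{\bot})\bigr)$, while $a(s)=\int_{P_HK_s}g$ and $b(s)=F_s(0)$. By the Brunn--Minkowski inequality $F_s^{1/k}$ is concave on the convex body $C:=P_HK_s\subset H\cong\R^{n-k}$, and the hypothesis gives $F_s(0)=\max_C F_s$. Hence the desired per-level estimate $a(s)b(s)\leq\binom{n}{k}c(s)$ is exactly the following weighted Rogers--Shephard lemma (with $m=n-k$): whenever $C\subset\R^m$ is a convex body with $0\in C$, $h:C\to[0,\infty)$ is concave with $h(0)=\max_C h$, and $g:\R^m\to[0,\infty)$ is radially decreasing, one has $h(0)^k\int_C g\,\dlat x\leq\binom{m+k}{k}\int_C g\,h^k\,\dlat x$.

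To prove this lemma I would pass to polar coordinates about the origin, which reduces it to the one-dimensional inequality, for every direction $\theta\in\s^{m-1}$,
\[
\int_0^{\rho(\theta)}w(r)r^{m-1}\,\dlat r\leq\binom{m+k}{k}\int_0^{\rho(\theta)}w(r)\psi(r)^k r^{m-1}\,\dlat r,
\]
where $\rho$ is the radial function of $C$, $w(r)=g(r\theta)$ is non-increasing, and $\psi(r)=h(r\theta)/h(0)$ is concave with $\psi(0)=1=\max\psi$. Since a concave $\psi$ with maximum value $1$ at $0$ dominates the linear profile $1-r/\rho(\theta)$, replacing $\psi$ by this profile only decreases the right-hand side, so it is enough to treat $\psi(r)=1-r/\rho(\theta)$; after rescaling $r\mapsto r/\rho(\theta)$ and a layer-cake decomposition of the non-increasing weight $w$ (whose superlevel sets are intervals $[0,c]$), everything collapses to the elementary one-variable inequality
\[
\frac{c^m}{m}\leq\binom{m+k}{k}\int_0^c(1-u)^ku^{m-1}\,\dlat u,\qquad c\in(0,1].
\]
The difference of the two sides vanishes at $c=0$ and, by the Beta-integral identity, also at $c=1$; its derivative $c^{m-1}\bigl(\binom{m+k}{k}(1-c)^k-1\bigr)$ changes sign exactly once (from positive to negative), so the difference stays non-negative on $(0,1]$, which proves the displayed inequality and hence the lemma.

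Finally, because the bodies $K_s$ decrease as $s$ grows, both $a(s)=\int_{P_HK_s}g$ and $b(s)=\vol_k(K_s\cap H^{\bot})$ are non-increasing on $[0,M]$, so Chebyshev's integral inequality yields $\int_0^M a\int_0^M b\leq M\int_0^M ab$. Combining this with the per-level bound $a(s)b(s)\leq\binom{n}{k}c(s)$ gives $\int_0^M a\int_0^M b\leq\binom{n}{k}M\int_0^M c$, which is the asserted inequality once the integrals are read back off the layer-cake identities. I expect the weighted Rogers--Shephard lemma to be the crux: the assumption that the central section is maximal is exactly what aligns the maximum of $h$ with the centre of the radially decreasing weight $g$, and without it the anchoring bound $\psi(r)\geq 1-r/\rho(\theta)$ at the origin — and thus the whole polar reduction — would fail. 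The remaining ingredients, namely the layer-cake identities and Chebyshev's inequality, are routine, the only delicate point being the measure-zero identification of the superlevel sets of $P_Hf$ with the projections $P_HK_s$.
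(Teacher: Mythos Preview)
Your proof is correct and follows essentially the same architecture as the paper's: both decompose via the layer-cake representation over the superlevel sets of $f$, establish the per-level bound $a(s)\,b(s)\le\binom{n}{k}c(s)$, and then recombine the levels by a Chebyshev-type step (the paper phrases this as the pointwise inequality $\min(u,v)\ge uv$ for $u,v\in[0,1]$, which is the same content). The only real difference is in the proof of the per-level lemma: the paper invokes its Proposition~\ref{l:integral ineqs} (a set-inclusion argument yielding \eqref{e:int(suppf_C_theta)_g_rad_dec}), whereas you rederive it by polar coordinates and the one-variable Beta-integral inequality, which is precisely the paper's Lemma~\ref{l:F_nonposit} in disguise.
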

Here, the projection function $P_Hf:H\longrightarrow[0,\infty)$ of $f$ is
defined by $P_Hf(x)=\sup_{y\in H^{\bot}}f(x+y)$.

In the particular case of a log-concave integrable function $f$, this
result has been recently obtained in \cite[Theorem~1.1]{AAGJV}.

The paper is organized as follows. Section \ref{s:radial_decay} is mainly
devoted to the proofs of Theorems \ref{t:RS_measures_rad_decreasing} and
\ref{t:RS_CK_conv_hull_rad_dec} as well as the functional analogs of these
results. We start Section~\ref{s:functions} by deriving a general result
for functions with certain concavity conditions, which will play a
relevant role along the manuscript. As a consequence of this result we
prove, in particular, Theorem \ref{t:functional_RS}. Next, in Section
\ref{s:quasi_concave}, we study  Rogers-Shephard type inequalities for
measures  with quasi-concave densities, and prove  Theorem
\ref{t:RS_secc_proy_K(0)}. Finally, in Section \ref{s:remark}, we present
another Rogers-Shephard type inequality when assuming a further concavity
for the density of the involved measure.

\section{Rogers-Shephard type inequalities for measures with radially decreasing densities}\label{s:radial_decay}

\subsection{The case of convex sets}\label{ss:R-S_sets}

As pointed out in the previous section, one cannot expect to obtain
\eqref{e:RS_not_true} without having control on the translations of the
set $K$. Moreover, certain requirements on the density of the measure
$\mu$ must be made (see also the comments after
Remark~\ref{c:RS_measures_rad_decay} and Example~\ref{ex:ring}). To this
regard, in Section \ref{s:quasi_concave} we will show that one may
consider quasi-concave densities with maximum at the origin. In this
setting, we will also obtain other Rogers-Shephard type inequalities.

Let us now  follow a different approach. First we will prove an extension
of \eqref{e:RS} for the more general case of radially decreasing
densities, collected in Theorem \ref{t:RS_measures_rad_decreasing}. Before
showing it, we need the following auxiliary result.

\begin{lemma}\label{l:F_nonposit}
Let $\phi:[0,\infty)\longrightarrow[0,\infty)$ be a decreasing function
and let $n,m\in\N$. Then, for every $x\in(0,\infty)$,
\begin{equation*}
\int_0^x \left(1-\frac{t}{x} \right)^n t^{m-1} \phi(t)\,\dlat t \geq
\binom{n+m}{n}^{-1} \int_0^x t^{m-1}\phi(t) \,\dlat t,
\end{equation*}
with equality if and only if $\phi$ is constant on $(0,x)$.
\end{lemma}
\begin{proof}
Considering the function $F:(0,\infty)\longrightarrow[0,\infty)$ given by
\begin{equation*}
F(x)=\binom{n+m}{n}^{-1}\int_0^x t^{m-1}\phi(t) \,\dlat t -\int_0^x
\left(1-\frac{t}{x} \right)^n t^{m-1} \phi(t)\,\dlat t,
\end{equation*}
we need to show that it is non-positive.

Expanding the binomial $\left(1 - t/x \right)^n$ we may assert on one hand
that $F(x)\to 0$ as $x\to 0^+$. On the other hand, and jointly with
Lebesgue's differentiation theorem, we get that the derivative of $F$
exists for almost every $x\in(0,\infty)$ and further
\[
F'(x)=\binom{n+m}{n}^{-1}\,x^{m-1}\phi(x)-n\int_0^x\left(1-\frac{t}{x}\right)^{n-1}\frac{t^m}{x^2}\,\phi(t)
\,\dlat t.
\]
Now, applying the change of variable $u = t/x$, we get
\[
n\int_0^x\left(1-\frac{t}{x}\right)^{n-1}t^m\,\dlat
t=\dfrac{n\,\Gamma(n)\Gamma(m+1)}{\Gamma(n+m+1)}x^{m+1}=\binom{n+m}{n}^{-1}x^{m+1},
\]
where $\Gamma$ represents the Gamma function. This together with the fact
that $\phi$ is decreasing implies that $F'(x)\leq0$, with equality if and
only if $\phi$ is constant on $(0,x)$.

Since $F$ is absolutely continuous on every interval
$[a,b]\subset(0,\infty)$, because it arises as a finite sum of products of
absolutely continuous functions,
\[
F(x)=F(a)+\int_a^xF'(s)\,\dlat s\leq F(a)
\]
for all $x>0$ and any $0<a\leq x$. Taking into account that
$\lim_{a\to0^+}F(a)=0$ we then have
\[
F(x)= \int_0^x F'(s)\,\dlat s\leq 0,
\]
with equality if and only if $F'\equiv0$ almost everywhere or,
equivalently, when $\phi$ is constant on $(0,x)$.
\end{proof}

Next we prove Theorem \ref{t:RS_measures_rad_decreasing}. We follow the
idea of the original proof of the Rogers-Shephard inequality (\cite{RS1}),
with the main difference of the application of Lemma \ref{l:F_nonposit} in
\eqref{e:proving_RS_polar_lemma}.

\begin{proof}[Proof of Theorem \ref{t:RS_measures_rad_decreasing}]
Let $f:\R^n\longrightarrow[0,\infty)$ be the function given by
\[
f(x)=\vol\bigl(K\cap(x+K)\bigr).
\]
Observe that $\supp f=K-K$ and $f$ vanishes on ${\rm bd}(K-K)$.
Furthermore, using the  Brunn-Minkowski inequality~\eqref{e:B-M_ineq}
together with the inclusion
\begin{equation}\label{eq:inclus}
K\cap\bigl[(1-\lambda)x+\lambda y+K\bigr]
\supset(1-\lambda)\bigl[K\cap(x+K)\bigr]+\lambda\bigl[K\cap(y+K)\bigr],
\end{equation}
which holds for all $\lambda\in[0,1]$ and $x,y\in K-K$, we get that $f$ is
$(1/n)$-concave.

On the one hand, by Fubini's theorem, we have
\begin{equation}\label{e:proving_RS_fubini}
\begin{split}
\int_{K-K}f(x) \,\dlat\mu(x) & = \int_{\R^n}\int_{\R^n}\chi_{_K}(y)\chi_{_{y-K}}(x)\,\phi(x)\,\dlat
y\,\dlat x\\
 & =\int_{K}\mu(y-K)\,\dlat y=\vol(K)\,\overline{\mu}(-K).
\end{split}
\end{equation}
On the other hand, we define the function $g:K-K\longrightarrow[0,\infty)$
given by
\[
g(x)=f(0)\left[1-\frac{|x|}{\rho_{_{\!K-K}}\bigl(x/|x|\bigr)} \right]^n,
\quad \text{for every } x\neq0,
\]
and $g(0)=f(0)$, where
\[
\rho_{_{\!L}}(u)=\max\{\rho\geq 0:\rho u\in L\},\quad u\in\s^{n-1},
\]
stands for the radial function of $L\in\K^n$. Notice that $g^{1/n}$ is
affine on $\bigl[0,\rho_{_{\!K-K}}(u)u\bigr]$, for all $u\in\s^{n-1}$, and
so $g(0)^{1/n}=f(0)^{1/n}$ and
\[
g\bigl(\rho_{_{\!K-K}}(u)u\bigr)^{1/n}=0=f\bigl(\rho_{_{\!K-K}}(u)u\bigr)^{1/n}.
\]
Hence, since $f^{1/n}$ is concave, it follows that $f^{1/n}\geq g^{1/n}$
on $\bigl[0,\rho_{_{\!K-K}}(u)u\bigr]$. Therefore, using polar
coordinates, we have
\begin{equation}\label{e:proving_RS_polar}
\begin{split}
\int_{K-K}f(x)\,\dlat\mu(x) &
    =\int_{\s^{n-1}}\int_0^{\rho_{_{\!K-K}}(u)} r^{n-1} f(r u) \phi(r u)\,\dlat r\,\dlat\sigma(u)\\
 & \geq f(0)\int_{\s^{n-1}}\int_0^{\rho_{_{\!K-K}}(u)} \left(1 - \frac{r}{\rho_{_{\!K-K}}(u)}\right)^n r^{n-1} \phi(r u) \,\dlat r \,\dlat\sigma(u).
\end{split}
\end{equation}
Now, from \eqref{e:proving_RS_polar} and Lemma \ref{l:F_nonposit} we
obtain
\begin{equation}\label{e:proving_RS_polar_lemma}
\begin{split}
\int_{K-K} f(x) \,\dlat\mu(x)
& \geq \frac{1}{\binom{2n}{n}}f(0)\int_{\s^{n-1}}\int_0^{\rho_{_{\!K-K}}(u)}r^{n-1}
\phi(r u)\,\dlat r\,\dlat\sigma(u)\\
& =\frac{1}{\binom{2n}{n}}\vol(K)\mu(K-K),
\end{split}
\end{equation}
which, together with \eqref{e:proving_RS_fubini}, yields
\[
\mu(K-K)\leq\binom{2n}{n}\overline{\mu}(-K).
\]
By replacing $K$ with $-K$, we obtain the desired inequality.

Finally we notice that equality holds in
\eqref{e:RS_measures_rad_decreasing} only if there is equality in
\eqref{e:proving_RS_polar_lemma}. This implies, by Lemma
\ref{l:F_nonposit}, that $\phi(ru)$ is constant on
$\bigl(0,\rho_{_{\!K-K}}(u)\bigr)$ for $\sigma$-almost every
$u\in\s^{n-1}$. Since $\phi$ is continuous at the origin, $\mu$ is a
constant multiple of the Lebesgue measure on $K-K$ and, by Theorem
\ref{t:RS}, $K$ is a simplex. The converse immediately follows from
Theorem \ref{t:RS}.
\end{proof}

\begin{remark}
From the proof of the equality case in the above result (and the
corresponding one of Lemma~\ref{l:F_nonposit}), we notice that the
assumption of continuity at the origin for $\phi$ is necessary in order to
`recover' the Lebesgue measure (up to a constant). Indeed, one could
consider a simplex $K$ and a function $\phi$ that is constant on
$\bigl(0,\rho_{_{\!K-K}}(u)\bigr)$ for every $u\in\s^{n-1}$, but not
necessarily constant on $K-K$, and thus
\eqref{e:RS_measures_rad_decreasing} would hold with equality.
\end{remark}

The next theorem is obtained just by repeating the same argument given in
the proof of Theorem \ref{t:RS_measures_rad_decreasing}, but replacing
$-K$ with $L$.
\begin{theorem}\label{c:RS_K_L} Let $K, L\in\K^n$ and let
$\mu$ be a measure on $\R^n$ given by $\dlat\mu(x)=\phi(x)\,\dlat x$,
where $\phi:\R^n\longrightarrow[0,\infty)$ is radially decreasing. Then
\[
\mu(K+L)\vol\bigl(K\cap(-L)\bigr)\leq \binom{2n}{n}\int_K\mu(x+L)\dlat x.
\]
\end{theorem}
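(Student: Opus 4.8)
The plan is to mirror the proof of Theorem~\ref{t:RS_measures_rad_decreasing} essentially verbatim, only exchanging the body $-K$ for $L$ throughout. Accordingly, I would introduce the covariogram-type function $f:\R^n\longrightarrow[0,\infty)$ given by $f(x)=\vol\bigl(K\cap(x-L)\bigr)$, whose support is exactly $K+L$ (since $K\cap(x-L)\neq\emptyset$ precisely when $x\in K+L$) and which satisfies $f(0)=\vol\bigl(K\cap(-L)\bigr)$, the factor appearing on the left-hand side. If $f(0)=0$ the asserted inequality is trivial, so I may assume $f(0)>0$; then $\inter K\cap\inter(-L)\neq\emptyset$, which forces $0\in\inter(K+L)$ and guarantees that the radial function of $K+L$ is positive in every direction. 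Exactly as in \eqref{eq:inclus}, the inclusion $K\cap\bigl[(1-\lambda)x+\lambda y-L\bigr]\supset(1-\lambda)\bigl[K\cap(x-L)\bigr]+\lambda\bigl[K\cap(y-L)\bigr]$ holds for $\lambda\in[0,1]$ and $x,y\in K+L$, whence the Brunn-Minkowski inequality~\eqref{e:B-M_ineq} yields that $f$ is $(1/n)$-concave.

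Next I would compute the integral of $f$ against $\mu$ in two ways. By Fubini's theorem,
\[
\int_{K+L}f(x)\,\dlat\mu(x)=\int_{\R^n}\int_{\R^n}\chi_{_K}(y)\chi_{_{y+L}}(x)\,\phi(x)\,\dlat y\,\dlat x=\int_K\mu(y+L)\,\dlat y,
\]
which is precisely the right-hand side (up to the binomial factor). For the lower bound I would define $g$ on $K+L$ by $g(x)=f(0)\bigl[1-|x|/\rho_{_{\!K+L}}(x/|x|)\bigr]^n$ for $x\neq0$ and $g(0)=f(0)$; since $g^{1/n}$ is affine along each ray $\bigl[0,\rho_{_{\!K+L}}(u)u\bigr]$ and agrees with the concave function $f^{1/n}$ at both endpoints, concavity gives $f^{1/n}\geq g^{1/n}$ on $K+L$.

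Passing to polar coordinates and using $f\geq g$, I would bound $\int_{K+L}f\,\dlat\mu$ below by $f(0)\int_{\s^{n-1}}\int_0^{\rho_{_{\!K+L}}(u)}\bigl(1-r/\rho_{_{\!K+L}}(u)\bigr)^n r^{n-1}\phi(ru)\,\dlat r\,\dlat\sigma(u)$; since $\phi$ is radially decreasing, the map $r\mapsto\phi(ru)$ is decreasing, so Lemma~\ref{l:F_nonposit} with $m=n$ applies on each ray and produces the factor $\binom{2n}{n}^{-1}$, giving $\int_{K+L}f\,\dlat\mu\geq\binom{2n}{n}^{-1}f(0)\,\mu(K+L)$. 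Combining this with the Fubini identity and recalling $f(0)=\vol(K\cap(-L))$ yields the claim. I expect no genuine obstacle here, the argument being a direct transcription; the only points needing care are the bookkeeping of which translate to intersect (it must be $x-L$, not $x+L$, so that the support comes out as $K+L$) and the reduction to the case $0\in\inter(K+L)$, which legitimizes the radial-function construction.
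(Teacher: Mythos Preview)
Your proposal is correct and is exactly the approach the paper intends: the paper states only that the theorem ``is obtained just by repeating the same argument given in the proof of Theorem~\ref{t:RS_measures_rad_decreasing}, but replacing $-K$ with $L$,'' and your write-up carries this out faithfully. Your handling of the case $\vol\bigl(K\cap(-L)\bigr)=0$ and the consequent verification that $0\in\inter(K+L)$ (needed for the radial-function construction) is a point the paper leaves implicit.
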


\begin{remark}\label{c:RS_measures_rad_decay}
As a straightforward consequence of Theorem
\ref{t:RS_measures_rad_decreasing}, we get the following statement. Let
$K\in\K^n$ and let $\mu$ be a measure on $\R^n$ given by
$\dlat\mu(x)=\phi(x)\,\dlat x$, where $\phi:\R^n\longrightarrow[0,\infty)$
is radially decreasing. Then
\begin{equation}\label{e:RS_measures_rad_decay_coro}
\mu(K-K)\leq \binom{2n}{n}
\min\left\{\sup_{x\in\R^n}\mu(x+K),\sup_{x\in\R^n}\mu(x-K)\right\}.
\end{equation}
\end{remark}

The above fact trivially holds in dimension $n=1$ for an arbitrary
measure. Indeed, given $K=[a,b]$, then
\[
\begin{split}
\mu(K-K)=\mu\bigl([a-b,b-a]\bigr)&=\mu\bigl([a,b]-a\bigr)+\mu\bigl([a,b]-b\bigr)\\
&\leq2\min\left\{\sup_{x\in\R}\mu(x+K),\,\sup_{x\in\R}\mu(x-K)\right\}.
\end{split}
\]
However, in dimension $n\geq 2$ the radial decay assumption cannot be
omitted, as the following example shows.

\begin{example}\label{ex:ring}
Fix
$0<\varepsilon<\delta<2$. Consider the measure $\mu$  on $\R^2$ with density
\[
\phi(x)=\left\{\begin{array}{ll}
1 & \text{ if }x\in\delta B_2\cup\bigl(2B_2\setminus(2-\varepsilon)B_2\bigr),\\
0 & \text{ otherwise}
\end{array}\right.
\]
(see Figure
\ref{f:example_dim_2_assump_density_needed}). Then
\begin{equation}\label{e:ex_packing}
\mu(B_2-B_2)>6\sup_{x\in\R^2}\mu(x+B_2).
\end{equation}
Note that \eqref{e:ex_packing} contradicts
\eqref{e:RS_measures_rad_decay_coro}. Indeed, on the one hand,
\[
\mu(B_2-B_2)=\mu(2B_2)=\pi\delta^2+\bigl(4-(2-\varepsilon)^2\bigr)\pi=4\pi\varepsilon+\pi(\delta^2-\varepsilon^2).
\]

\begin{figure}[h]
\begin{center}
\includegraphics[width=3.8cm]{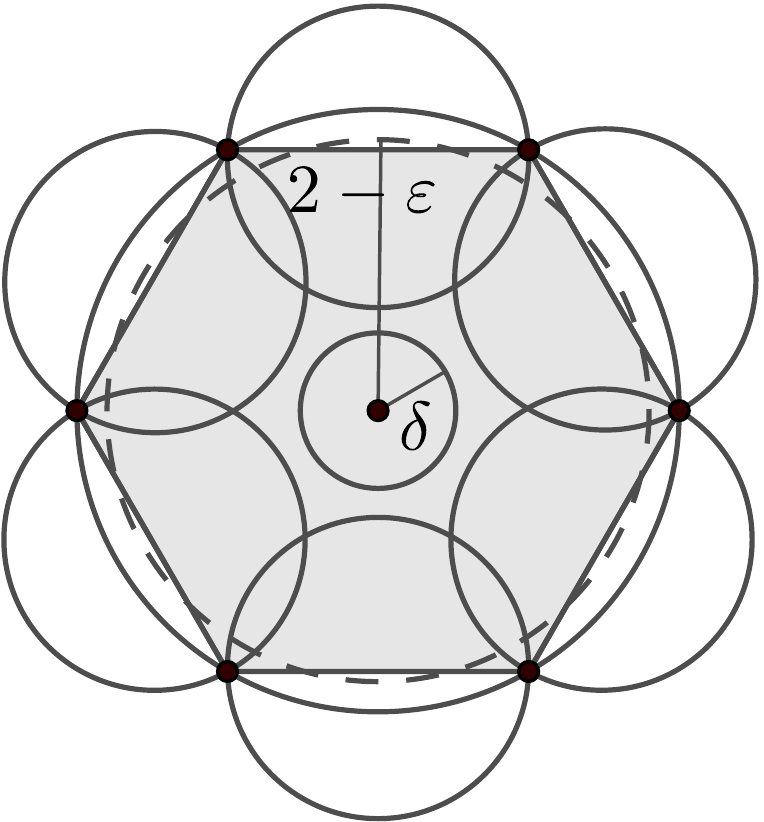}
\end{center}
\caption{Constructing a measure for which
\eqref{e:RS_measures_rad_decay_coro} does not hold.}
\label{f:example_dim_2_assump_density_needed}
\end{figure}

On the other hand, we note that we need at least 6 copies of the unit disk
in order to cover $\bd(2B_2)$, which can be seen by considering a regular
hexagon inscribed in $2B_2$ (see Figure
\ref{f:example_dim_2_assump_density_needed}). Moreover,  if we would cover
$\bd(2B_2)$ with exactly $6$ translated copies of $B_2$, then the covering
discs would stay away from the origin. Thus, for $\varepsilon>0$ small
enough,
\[
\sup_{x\in\R^2}\vol\Bigl((x+B_2)\cap\bigl(2B_2\setminus(2-\varepsilon)B_2\bigr)\Bigr)=\frac{1}{6}4\pi\varepsilon+o(\varepsilon).
\]
Taking, e.g., $\delta =\sqrt{\varepsilon}/100$ we get, for $\varepsilon$
small enough, that $\delta >\varepsilon$, and also that
$4\pi\varepsilon/6>\pi\delta^2$ and $o(\varepsilon)<\delta^2$.  Thus
\[
\begin{split}
6\sup_{x\in\R^2}\mu(x+B_2)&=6\sup_{x\in\R^2}\vol\Bigl((x+B_2)\cap\bigl(2B_2\setminus(2-\varepsilon)B_2\bigr)\Bigr)
=4\pi\varepsilon+o(\varepsilon)\\
&<4\pi\varepsilon+\pi(\delta^2-\varepsilon^2).
\end{split}
\]
Moreover, since $\sup_{x\in\R^2}\mu(x+B_2)>\overline{\mu}(B_2)$, this example
shows that the radial decay assumption is also needed in
Theorem~\ref{t:RS_measures_rad_decreasing}.
\end{example}

Regarding a reverse inequality for Theorem
\ref{t:RS_measures_rad_decreasing} (or
\eqref{e:RS_measures_rad_decay_coro}), we have the following result, which
extends \eqref{e:B-M(K-K)}.

\begin{theorem}\label{t:R-S_reverse}
Let $K\in\K^n$. Let $\mu$ be a measure on $\R^n$ given by
$\dlat\mu(x)=\phi(x)\,\dlat x$, where $\phi:\R^n\longrightarrow[0,\infty)$
is an even quasi-concave function. Then
\begin{equation}\label{e:reverse_RS_quasi}
\mu(K-K)\geq\mu(2K).
\end{equation}
Equality holds in \eqref{e:reverse_RS_quasi} only if $K\cap(\supp\phi)/2$
is centrally symmetric. Moreover, if $K$ is centrally symmetric with
respect to the origin, then equality holds in \eqref{e:reverse_RS_quasi}.
\end{theorem}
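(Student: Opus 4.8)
The plan is to reduce the inequality to a purely volumetric statement about the superlevel sets of $\phi$ and then exploit central symmetry together with the Brunn--Minkowski inequality \eqref{e:B-M_ineq}. First I would write both sides via the layer-cake (Cavalieri) representation: for any measurable $M$,
\[
\mu(M)=\int_M\phi(x)\,\dlat x=\int_0^{\infty}\vol\bigl(M\cap\{x:\phi(x)\geq s\}\bigr)\,\dlat s,
\]
so that, writing $L_s=\{x:\phi(x)\geq s\}$, it suffices to prove for almost every $s>0$ that $\vol\bigl((K-K)\cap L_s\bigr)\geq\vol\bigl(2K\cap L_s\bigr)$. Since $\phi$ is quasi-concave the sets $L_s$ are convex, and since $\phi$ is even they are symmetric about the origin, $L_s=-L_s$; in particular each nonempty $L_s$ contains the origin. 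Thus the whole problem reduces to the following claim for an arbitrary origin-symmetric convex body $C$: $\vol\bigl((K-K)\cap C\bigr)\geq\vol\bigl(2K\cap C\bigr)$.

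To prove the claim, I would set $A=2K\cap C$, which is convex, and observe that, because $C=-C$, one has $-A=(-2K)\cap C$. I then check the inclusion $\tfrac12\bigl(A+(-A)\bigr)\subseteq(K-K)\cap C$: indeed $A+(-A)\subseteq 2K+(-2K)=2(K-K)$ gives membership in $K-K$, while $A+(-A)\subseteq C+C=2C$ (using convexity of $C$) gives membership in $C$. Applying Brunn--Minkowski \eqref{e:B-M_ineq} to $A$ and $-A$, which have equal volume, yields
\[
\vol\Bigl(\tfrac12\bigl(A+(-A)\bigr)\Bigr)^{1/n}\geq\tfrac12\vol(A)^{1/n}+\tfrac12\vol(-A)^{1/n}=\vol(A)^{1/n},
\]
and hence $\vol\bigl((K-K)\cap C\bigr)\geq\vol(A)=\vol\bigl(2K\cap C\bigr)$, proving the claim and, after integrating in $s$, the inequality \eqref{e:reverse_RS_quasi}.

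For the equality analysis the converse is immediate: if $K=-K$ then $K-K=2K$ and both sides coincide. For the necessity, equality in \eqref{e:reverse_RS_quasi} forces $\vol\bigl((K-K)\cap L_s\bigr)=\vol\bigl(2K\cap L_s\bigr)$ for almost every $s$; letting $s\to0^+$, the sets $L_s$ increase to $\{\phi>0\}$, so by monotone convergence the chain of inequalities above becomes an equality for $C=\supp\phi$. Equality in Brunn--Minkowski then forces $A=2K\cap\supp\phi$ to be homothetic to $-A$ and, having equal volume, to be centrally symmetric. Since $A=2\bigl(K\cap(\supp\phi)/2\bigr)$, this says precisely that $K\cap(\supp\phi)/2$ is centrally symmetric, as claimed.

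I expect the main obstacle to lie in the rigor of the equality discussion rather than in the inequality itself: one must justify passing to the limit $s\to0^+$ inside the level-set identity, treat the possibly degenerate (lower-dimensional) case $\vol(A)=0$ separately, and argue that the origin-symmetric convex set $\supp\phi$ (which may be unbounded) can still be fed into the Brunn--Minkowski equality characterization, where translation-invariance together with the equal-volume constraint collapses ``homothetic'' to ``centrally symmetric.''
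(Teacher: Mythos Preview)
Your argument is correct and follows essentially the same route as the paper: both decompose $\mu$ via the superlevel sets of $\phi$, prove the level-wise inclusion $\tfrac12\bigl(A+(-A)\bigr)\subset (K-K)\cap C$ for $A=2K\cap C$ with $C$ origin-symmetric convex, and apply Brunn--Minkowski; for equality, both pass to the limit along a sequence of levels tending to $0$ to reduce to the Brunn--Minkowski equality case with $C=\supp\phi$. The obstacles you flag are precisely the points the paper addresses explicitly (choosing a sequence $t_m\downarrow 0$ along which the level-wise equality holds, and noting that $\supp\phi$ is $n$-dimensional so that $\vol\bigl(2K\cap\supp\phi\bigr)>0$).
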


\begin{proof}
We write $\overline{K}_t=(2K)\cap\c_t(\phi)$ for every $t\in[0,1]$. On the
one hand, by Fubini's theorem, we have
\begin{equation}\label{e:proving_reverse_RS_quasi}
\begin{split}
\mu(2K) & = \int_{2K} \phi(x) \,\dlat x
    =\|\phi\|_{\infty}\int_{2K}\int_0^{\frac{\phi(x)}{\|\phi\|_{\infty}}}\,\dlat t \,\dlat x
    =\|\phi\|_{\infty} \int_0^1 \int_{2K} \chi_{_{\c_t(\phi)}}(x) \,\dlat x \,\dlat t\\
   & =\|\phi\|_{\infty} \int_0^1 \vol\bigl(\overline{K}_t\bigr) \,\dlat t
   \leq\|\phi\|_{\infty} \,2^{-n} \int_0^1 \vol\bigl(\overline{K}_t-\overline{K}_t\bigr) \,\dlat t,
\end{split}
\end{equation}
where in the last inequality we have used the Brunn-Minkowski inequality
(cf.~\eqref{e:B-M_ineq}).

On the other hand, since $\phi$ is quasi-concave and even, then
$\c_t(\phi)$ is convex and centrally symmetric (with respect to the
origin), and hence $\overline{K}_t-\overline{K}_t\subset(2K-2K)\cap
2\c_t(\phi)=2\bigl((K-K)\cap\c_t(\phi)\bigr)$. Thus, we get
\begin{equation*}
\begin{split}
\mu(2K) &\leq \|\phi\|_{\infty} \,2^{-n} \int_0^1
\vol\bigl(\overline{K}_t-\overline{K}_t\bigr) \,\dlat t
\leq \|\phi\|_{\infty} \int_0^1 \vol \bigl( (K-K) \cap \c_t(\phi)\bigr) \,\dlat t\\
&=  \|\phi\|_{\infty} \int_0^1 \int_{\R^n} \chi_{_{(K-K) \cap
\c_t(\phi)}}(x) \,\dlat x \,\dlat t = \mu(K-K).
\end{split}
\end{equation*}
For the equality case, we note that the identity $\mu(2K)=\mu(K-K)$
implies that \eqref{e:proving_reverse_RS_quasi} holds with equality, and
thus
$\vol\bigl(\overline{K}_t\bigr)=2^{-n}\vol\bigl(\overline{K}_t-\overline{K}_t\bigr)$
for almost every $t\in[0,1]$. Then, there exists a decreasing sequence
$(t_m)_m\subset[0,1]$ with $t_m\to 0$ and such that
$\vol\bigl(\overline{K}_{t_m}\bigr)=2^{-n}\vol\bigl(\overline{K}_{t_m}-\overline{K}_{t_m}\bigr)$
for all $m\in\N$. Therefore, since the boundary of a convex set has null
(Lebesgue) measure, we get
\begin{equation}\label{e:lower_bound}
\begin{split}
\vol\bigl((2K)\cap\supp\phi\bigr) &
    =\vol\left(\bigcup_{m=1}^\infty\overline{K}_{t_m}\right)
    =\lim_m \vol\bigl(\overline{K}_{t_m}\bigr)
    =\lim_m 2^{-n}\vol\bigl(\overline{K}_{t_m}-\overline{K}_{t_m}\bigr)\\
 & =2^{-n}\vol\left(\bigcup_{m=1}^\infty\bigl(\overline{K}_{t_m}-\overline{K}_{t_m}\bigr)\right)\\
  & =2^{-n}\vol\Bigl(\bigl((2K)\cap\supp\phi\bigr)-\bigl((2K)\cap\supp\phi\bigr)\Bigr).
\end{split}
\end{equation}
Since $\supp\phi$ is an $n$-dimensional convex set containing the origin
then $\mu(2K)=\mu(K-K)>0$, and so $\vol\bigl((2K)\cap\supp\phi\bigr)>0$.
Therefore \eqref{e:lower_bound} implies that $(2K)\cap\supp\phi$ is
centrally symmetric. The sufficient condition is evident.
\end{proof}

If we apply \eqref{e:reverse_RS_quasi} to the set $K'=K+x/2$ then
$\mu(K-K)\geq\sup_{x\in\R^n}\mu(x+2K)$ also holds. We observe, however,
that we cannot expect a general reverse inequality for
\eqref{e:RS_measures_rad_decay_coro} in the non-even case, as the
following example shows.

\begin{example}
Let $\theta>0$ and consider $W_{\theta}=\bigl\{r(\cos t,\sin t):0\leq
t\leq\theta,\,r\geq 0\bigr\}\subset\R^2$. Let $\mu_{\theta}$ be the
measure on $\R^2$ with density $\phi_{\theta}(x)=\chi_{_{W_\theta}}(x)$
(see Figure \ref{f:wedge}).

\begin{figure}[h]
\begin{center}
\includegraphics[width=7.8cm]{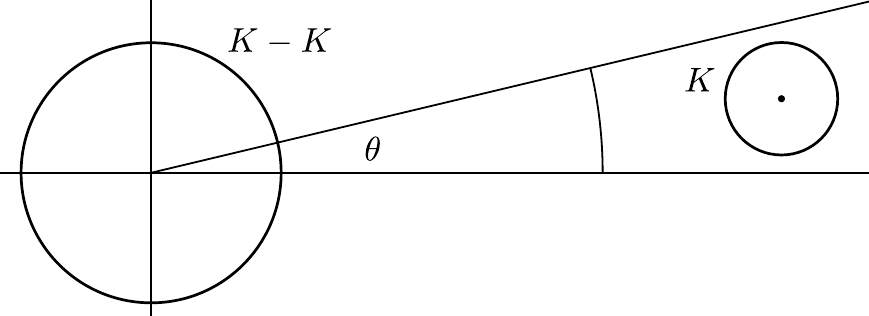}
\end{center}
\caption{A construction for which $\mu(K-K)\to 0$.} \label{f:wedge}
\end{figure}

By letting $\theta\to 0$, we can move a set $K$ far enough, but keeping
the measure of the shifts of $K$ constant, while the measure of $K-K$ will
be arbitrarily small. So the left-hand side of
\eqref{e:RS_measures_rad_decay_coro} tends to zero whereas the right-hand
side is fixed.
\end{example}

A way to strengthen inequality \eqref{e:RS_measures_rad_decay_coro} would
be to replace $\mu(K-K)$ by the quantity $\sup_{\omega\in\R^n} \mu(K-K +\omega)$:

\medskip

\noindent{\bf Question:} Given a measure $\mu$ on $\R^n$, is it true that for every $K\in\K^n$
\begin{equation*}\label{e:question}
\sup_{\omega\in\R^n} \mu(K-K
+\omega)\leq\binom{2n}{n}\min\left\{\sup_{x\in\R^n}\mu(x+K),\sup_{x\in\R^n}\mu(x-K)\right\}?
\end{equation*}

The following result partially solves this question, in the setting of
quasi-concave densities, by exploiting the approach carried out in the
proof of Theorem \ref{t:RS_measures_rad_decreasing}. The idea relies on
the possibility of finding a point, for each translated copy of $K-K$,
from which the density is radially decreasing over the given translation
of $K-K$. The negative counterpart is the apparent necessity of including
a factor jointly with the measure of the shift of $K-K$. Nevertheless, we
observe that the supremum on the right-hand side can be taken over $K$. In
Section \ref{s:quasi_concave}, we will provide a different solution to
this issue (see Theorem~\ref{t:RS_omega_quasiconcave}).

\begin{proposition}\label{t:RS_omega_rad_decreasing}
Let $K\in\K^n$ and let $\mu$ be a measure on $\R^n$ given by
$\dlat\mu(x)=\phi(x)\,\dlat x$, where $\phi:\R^n\longrightarrow[0,\infty)$
is a quasi-concave function whose restriction to its support is
continuous. Then, for every $\omega\in\R^n$,
\begin{equation}\label{e:RS_omega_rad_decreasing}
c(\omega)\mu(K-K+\omega) \leq\binom{2n}{n}\sup_{y\in K}\mu(y+\omega-K),
\end{equation}
where $c(\omega)=\vol\bigl(K\cap(\omega'-\omega+K)\bigr)\vol(K)^{-1}$, and
$\omega'\in K-K+\omega$ is such that $\phi(\omega')=\max_{x\in
K-K+\omega}\phi(x)$. Moreover, equality holds for some $\omega_0\in\R^n$
if and only if $\mu$ is a constant multiple of the Lebesgue measure on
$K-K+\omega_0$, $c(\omega_0)=1$ and $K$ is a simplex.
\end{proposition}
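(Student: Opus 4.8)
The plan is to adapt the proof of Theorem~\ref{t:RS_measures_rad_decreasing}, shifting the centre of the polar-coordinate argument from the origin to the point $\omega'$ where $\phi$ attains its maximum over $K-K+\omega$. First I would record that such an $\omega'$ exists: since $\phi$ is continuous on its (closed) support and $K-K+\omega$ is compact, a maximizing sequence converges to a point of $K-K+\omega$ lying in $\supp\phi$ (the case of maximal value $0$ being trivial), so the maximum is attained.

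Set $f(x)=\vol\bigl(K\cap(x+K)\bigr)$, which is $(1/n)$-concave with $\supp f=K-K$, $f(0)=\vol(K)$, and $f$ vanishing on $\bd(K-K)$, exactly as in the proof of Theorem~\ref{t:RS_measures_rad_decreasing}. The translate $f(\cdot-\omega)$ then has support $K-K+\omega$ and, since $K\cap(x+K)\subsetneq K$ for $x\neq 0$, its unique maximum at $x=\omega$. A Fubini computation identical to \eqref{e:proving_RS_fubini} gives the identity
\[
\int_{K-K+\omega}f(x-\omega)\,\dlat\mu(x)=\int_K\mu(y+\omega-K)\,\dlat y\le\vol(K)\sup_{y\in K}\mu(y+\omega-K),
\]
which already produces the right-hand side of \eqref{e:RS_omega_rad_decreasing}. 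It therefore suffices to bound the left-hand side from below by $\binom{2n}{n}^{-1}f(\omega'-\omega)\,\mu(K-K+\omega)$, because $f(\omega'-\omega)=c(\omega)\vol(K)$ by the very definition of $c(\omega)$.

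For this lower bound I would use polar coordinates centred at $\omega'$, writing $\rho(u)$ for the radial function of $K-K+\omega$ about $\omega'$. Along each ray $r\mapsto\omega'+ru$, $r\in[0,\rho(u)]$, the function $r\mapsto f(\omega'+ru-\omega)^{1/n}$ is concave, equals $f(\omega'-\omega)^{1/n}$ at $r=0$ and $0$ at $r=\rho(u)$, so it dominates the affine interpolant; hence $f(\omega'+ru-\omega)\ge f(\omega'-\omega)\bigl(1-r/\rho(u)\bigr)^n$. Crucially, quasi-concavity together with the maximality of $\phi(\omega')$ forces $r\mapsto\phi(\omega'+ru)$ to be decreasing on $[0,\rho(u)]$ (this is the point from which $\phi$ is radially decreasing over $K-K+\omega$). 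Applying Lemma~\ref{l:F_nonposit} with $m=n$ to this decreasing radial profile for each $u$ and integrating over $\s^{n-1}$ yields exactly $\int_{K-K+\omega}f(x-\omega)\,\dlat\mu(x)\ge\binom{2n}{n}^{-1}f(\omega'-\omega)\,\mu(K-K+\omega)$, and combining the three estimates gives \eqref{e:RS_omega_rad_decreasing}.

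The main obstacle is the equality case. Equality forces equality in Lemma~\ref{l:F_nonposit} for $\sigma$-almost every $u$, so $\phi$ is constant along almost every ray from $\omega'$; continuity of $\phi$ on its support then upgrades this to $\phi$ being constant on $K-K+\omega_0$, i.e.\ $\mu$ is a constant multiple of Lebesgue measure there. It also forces $r\mapsto f(\omega'+ru-\omega_0)^{1/n}$ to be affine along every ray from $\omega'$ (by continuity of $f$). Since $f(\cdot-\omega_0)^{1/n}$ is concave with strict maximum at the interior point $\omega_0$, an affine profile along the ray through $\omega_0$ would have to rise and then fall, which is impossible unless $\omega'=\omega_0$; as $f$ is uniquely maximized at $0$, this is precisely $c(\omega_0)=1$. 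Finally, with $\phi$ constant and $\omega'=\omega_0$, the equality in the radial step reduces, via $\int_{K-K}f=\vol(K)^2$, to $\vol(K-K)=\binom{2n}{n}\vol(K)$, so $K$ is a simplex by Theorem~\ref{t:RS}. Conversely, if $\mu$ is a constant multiple $c_0$ of Lebesgue measure on $K-K+\omega_0$, $c(\omega_0)=1$ and $K$ is a simplex, then $\mu(y+\omega_0-K)=c_0\vol(K)$ for every $y\in K$, and both sides of \eqref{e:RS_omega_rad_decreasing} equal $c_0\binom{2n}{n}\vol(K)$, so equality holds.
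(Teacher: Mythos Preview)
Your argument is correct and matches the paper's proof: both define the $(1/n)$-concave function $f(x)=\vol\bigl(K\cap(x-\omega+K)\bigr)$, combine the Fubini identity with polar coordinates centred at $\omega'$, and invoke Lemma~\ref{l:F_nonposit} along each ray. The only cosmetic difference is in the equality analysis, where the paper deduces $c(\omega_0)=1$ and the simplex conclusion directly from $c(\omega_0)\vol(K-K)=\binom{2n}{n}\vol(K)$ (using $c(\omega_0)\le 1$ together with Theorem~\ref{t:RS}) rather than via your affine-profile argument forcing $\omega'=\omega_0$.
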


\begin{proof}
Let $f:\R^n\longrightarrow[0,\infty)$ be defined as
$f(x)=\vol\bigl(K\cap(x-\omega+K)\bigr)$. As before, we get that $\supp
f=K-K+\omega$ and  $f$ is $(1/n)$-concave (see  \eqref{e:B-M_ineq} and
(\ref{eq:inclus})). On the one hand, by Fubini's theorem, we have
\begin{equation}\label{e:proving_RS_fubini_2}
\int_{K-K+\omega}f(x) \,\dlat\mu(x)=
\int_{\R^n}\int_{\R^n}\chi_{_K}(y)\chi_{_{y+\omega-K}}(x)\,\phi(x)\,\dlat
y\,\dlat x=\int_{K}\mu(y+\omega-K)\,\dlat y.
\end{equation}
On the other hand, from the continuity of $\phi$  on $\supp\phi$, we know
that there exists a point $\omega'\in(K-K+\omega)\cap\supp\phi$, which is
a compact set, such that $\phi(\omega') = \max_{x\in K-K+\omega}\phi(x)$.
This, together with the quasi-concavity of $\phi$,  implies that it
radially decays from $\omega'$ on $K-K+\omega$, i.e.,
$\phi\bigl(\omega'+t(x-\omega')\bigr)\geq\phi(x)$ for any $t\in[0,1]$ and
all $x\in K-K+\omega$.

Now we define the function $g:K-K+\omega\longrightarrow[0,\infty)$ given
by
\[
g(x)=f(\omega')\left[1-\frac{|x-\omega'|}{\rho_{_{\!K-K+\omega-\omega'}}\bigl((x-\omega')/|x-\omega'|\bigr)}
\right]^n, \quad \text{ for every } x\neq\omega',
\]
and $g(\omega')=f(\omega')$. Since $f^{1/n}$ is concave, it follows that
$f^{1/n}\geq g^{1/n}$ on
$\bigl[\omega',\omega'+\rho_{_{\!K-K+\omega-\omega'}}(u)u\bigr]$, and so,
via the polar coordinates $z=x-\omega'=ru$, we get
\begin{equation*}\label{e:proving_RS_polar_2}
\begin{split}
& \int_{K-K+\omega} f(x) \,\dlat\mu(x) =\int_{K-K+\omega-\omega'} f(\omega'+z)\phi(\omega'+z)\,\dlat z\\
 & =\int_{\s^{n-1}}\int_0^{\rho_{_{\!K-K+\omega-\omega'}}(u)}r^{n-1}f(\omega'+ru)\phi(\omega'+ru)\,\dlat r\,\dlat\sigma(u)\\
 & \geq f(\omega')\int_{\s^{n-1}}\int_0^{\rho_{_{\!K-K+\omega-\omega'}}(u)}
    \left[1-\frac{r}{\rho_{_{\!K-K+\omega-\omega'}}(u)}\right]^nr^{n-1}\phi(\omega'+ru)\,\dlat r \,\dlat\sigma(u).
\end{split}
\end{equation*}
Then Lemma \ref{l:F_nonposit} yields
\begin{equation}\label{e:proving_RS_omega_lemma}
\begin{split}
\int_{K-K+\omega} f(x) \,\dlat\mu(x)
 & \geq\frac{f(\omega')}{\binom{2n}{n}}\int_{\s^{n-1}}\int_0^{\rho_{_{\!K-K+\omega-\omega'}}(u)}
    r^{n-1} \phi(\omega'+r u)\,\dlat r\,\dlat\sigma(u)\\
 & =\frac{1}{\binom{2n}{n}}\vol\bigl(K\cap(\omega'-\omega+K)\bigr)\mu(K-K+\omega),
\end{split}
\end{equation}
which, together with \eqref{e:proving_RS_fubini_2}, gives
\[
\begin{split}
\mu(K-K+\omega)\vol\bigl(K\cap(\omega'-\omega+K)\bigr)
& \leq\binom{2n}{n}\int_{K}\mu(y+\omega-K)\,\dlat y\\
& \leq\binom{2n}{n}\vol(K)\sup_{y\in K}\mu(y+\omega-K).
\end{split}
\]
Finally we notice that equality holds in \eqref{e:RS_omega_rad_decreasing}
for some $\omega_0\in\R^n$ only if there is equality in
\eqref{e:proving_RS_omega_lemma}. This implies, by Lemma
\ref{l:F_nonposit}, that $\phi(\omega'+r u)$ is constant on
$\bigl(0,\rho_{_{\!K-K+\omega_0-\omega'}}(u)\bigr)$ for $\sigma$-almost
every $u\in\s^{n-1}$. Since $\phi$ is continuous at $\omega'\in\supp\phi$,
$\mu$ is a constant multiple of the Lebesgue measure on $K-K+\omega_0$
and, by Theorem \ref{t:RS}, $K$ is a simplex (in particular,
$c(\omega_0)=1$). The converse immediately follows from Theorem
\ref{t:RS}.
\end{proof}

\subsection{The functional case}

In this subsection we draw a consequence of Theorem
\ref{t:RS_measures_rad_decreasing} regarding integrals of quasi-concave
functions, which extends two results of Colesanti \cite[Theorems~4.3 and
4.5]{Co} and is collected in Theorem~\ref{t:pquasitheorem}. To this end,
given a quasi-concave function $f:\R^n\longrightarrow[0,\infty)$, we
define the ($-\infty$)-difference of $f$, which remains quasi-concave (cf.
\cite[Proposition~4.2]{Co}), by
\[
\Delta_{-\infty}f(z)=\sup_{z=x-y}\min\bigl\{f(x),f(y)\bigr\}.
\]
Besides $\Delta_{-\infty} f$, we
also consider the (difference) functions $\Delta_{-\infty,\theta} f$ (for
some $\theta\in [0,1]$) and $\widetilde{\Delta}_{-\infty} f$ given by
\begin{equation*}
\begin{split}
\Delta_{-\infty,\theta} f(z) & =\sup_{z=(1-\theta)x-\theta y}\min\bigl\{f(x),f(y)\bigr\},\\
\widetilde{\Delta}_{-\infty} f(z) &
    =\sup_{\substack{z=(1-\theta)x-\theta y\\\theta\in[0,1]}}\min\bigl\{f(x),f(y)\bigr\}.
\end{split}
\end{equation*}
These functions can be regarded as the (quasi-concave) functional
counterparts of $K-K$, $(1-\theta)K-\theta K$ and
$\conv\bigl(K\cup(-K)\bigr)$, respectively, as it is shown via their
(strict) superlevel sets. For the sake of brevity we will write, for a
function $f:\R^n\longrightarrow[0,\infty)$ and $t\in[0,\infty)$,
\[
S_{>t}(f)=\bigl\{x\in\R^n:f(x)>t\bigr\};
\]
analogously, $S_{\geq t}(f)=\bigl\{x\in\R^n:f(x)\geq t\bigr\}$. We observe that if
$f:\R^n\longrightarrow[0,\infty)$ is a quasi-concave function, then
\begin{equation}\label{e:superlevel_quasi}
\begin{split}
(i) & \qquad S_{>t}\bigl(\Delta_{-\infty}f\bigr)=S_{>t}(f)-S_{>t}(f),\\
(ii) & \qquad S_{>t}\bigl(\Delta_{-\infty,\theta}f\bigr)=(1-\theta)S_{>t}(f)-\theta S_{>t}(f),\\
(iii) & \qquad S_{>t}\left(\widetilde{\Delta}_{-\infty}f\right)=\conv\Bigl(S_{>t}(f)\cup\bigl(-S_{>t}(f)\bigr)\Bigr).
\end{split}
\end{equation}
Indeed, (i), (ii) and (iii) are completely analogous. To see (i), let
$z\in S_{>t}\bigl(\Delta_{-\infty}f\bigr)$. Then there exist $x,y$ such
that $z=x-y$ and $\min\bigl\{f(x),f(y)\bigr\}>t$, which shows the
inclusion
\[
S_{>t}\bigl(\Delta_{-\infty}f\bigr)\subset S_{>t}(f)-S_{>t}(f).
\]
For the reverse inclusion, if $z\in S_{>t}(f)-S_{>t}(f)$ then there exist
$x,y\in\R^n$, with $z=x-y$, such that $f(x)>t$ and $f(y)>t$. Since
$\min\bigl\{f(x),f(y)\bigr\}>t$ and $z=x-y$, we get that
$\Delta_{-\infty}f(z)>t$, as desired.

\medskip

Now we collect the above-mentioned consequence of
\eqref{e:RS_measures_rad_decreasing}, which may be seen as its functional
version.

\begin{theorem}\label{t:pquasitheorem}
Let $f:\R^n\longrightarrow[0,\infty)$ be an integrable quasi-concave
function. Let $\mu$ be a measure on $\R^n$
given by $\dlat\mu(x)=\phi(x)\,\dlat x$, where
$\phi:\R^n\longrightarrow[0,\infty)$ is radially decreasing. Then
\begin{equation}\label{e:pquasitheorem}
\int_{\R^n}\Delta_{-\infty}f(x)\,\dlat\mu(x)\leq\binom{2n}{n}\int_0^{\infty}\min\Bigl\{\overline{\mu}\bigl(S_{\geq
t}(f)\bigr),\overline{\mu}\bigl(-S_{\geq t}(f)\bigr)\Bigr\}\,\dlat t.
\end{equation}
In particular, by choosing $\dlat\mu(x)=\dlat x$, the Lebesgue measure, we
get
\[
\int_{\R^n}\Delta_{-\infty}f(x)\,\dlat x \leq
\binom{2n}{n}\int_{\R^n}f(x)\,\dlat x.
\]
\end{theorem}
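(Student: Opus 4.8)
The plan is to combine the layer-cake (Cavalieri) representation of the integral with a termwise application of Theorem~\ref{t:RS_measures_rad_decreasing} to the superlevel sets of $f$. First I would use that $\Delta_{-\infty}f$ is non-negative and measurable (being quasi-concave, its superlevel sets are convex, hence measurable) to write, via Fubini's theorem,
\[
\int_{\R^n}\Delta_{-\infty}f(x)\,\dlat\mu(x)=\int_0^{\infty}\mu\bigl(S_{>t}(\Delta_{-\infty}f)\bigr)\,\dlat t .
\]
By the description of the superlevel sets in the first identity of \eqref{e:superlevel_quasi}, namely $S_{>t}(\Delta_{-\infty}f)=S_{>t}(f)-S_{>t}(f)$, this reduces matters to bounding $\mu\bigl(S_{>t}(f)-S_{>t}(f)\bigr)$ for each $t>0$ and then integrating in $t$.

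Next I would fix $t>0$ and set $K_t=\overline{S_{>t}(f)}$, which is a closed convex set by the quasi-concavity of $f$. This is the step where integrability of $f$ is essential: for $t>0$ the set $S_{\geq t}(f)$ has finite Lebesgue measure, and a full-dimensional unbounded convex set contains a half-infinite cylinder and hence has infinite volume, so whenever $S_{\geq t}(f)$ is full-dimensional it is in fact bounded, i.e.\ $K_t\in\K^n$. For such $t$, Theorem~\ref{t:RS_measures_rad_decreasing} applies and gives
\[
\mu(K_t-K_t)\leq\binom{2n}{n}\min\bigl\{\overline{\mu}(K_t),\overline{\mu}(-K_t)\bigr\}.
\]
Since $\mu$ is absolutely continuous with respect to Lebesgue measure, passing to the closure affects neither $\mu(K_t-K_t)=\mu\bigl(S_{>t}(f)-S_{>t}(f)\bigr)$ nor, because $\overline{\mu}$ is insensitive to null modifications of its argument, the values $\overline{\mu}(\pm K_t)$; moreover $S_{>t}(f)$ and $S_{\geq t}(f)$ coincide up to a null set for all but countably many $t$, so $\overline{\mu}(\pm K_t)=\overline{\mu}\bigl(\pm S_{\geq t}(f)\bigr)$ for almost every $t$. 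Integrating the displayed inequality over the set $T$ of those $t$ for which $S_{\geq t}(f)\in\K^n$, and noting that the remaining $t$ (for which $S_{\geq t}(f)$ is lower-dimensional) contribute nothing to the left-hand integral, while the right-hand integrand is non-negative so that extending the integration back to all of $(0,\infty)$ can only increase it, I would obtain
\[
\int_0^\infty\mu\bigl(S_{>t}(f)-S_{>t}(f)\bigr)\,\dlat t\leq\binom{2n}{n}\int_0^{\infty}\min\Bigl\{\overline{\mu}\bigl(S_{\geq t}(f)\bigr),\overline{\mu}\bigl(-S_{\geq t}(f)\bigr)\Bigr\}\,\dlat t,
\]
which, combined with the Cavalieri identity above, is exactly \eqref{e:pquasitheorem}.

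Finally, for the particular case I would take $\mu$ to be the Lebesgue measure, whose density $\phi\equiv 1$ is trivially radially decreasing; here $\overline{\mu}(K)=\vol(K)=\overline{\mu}(-K)$ for every $K\in\K^n$, so the right-hand side collapses to $\binom{2n}{n}\int_0^\infty\vol\bigl(S_{\geq t}(f)\bigr)\,\dlat t=\binom{2n}{n}\int_{\R^n}f(x)\,\dlat x$ by Cavalieri once more. The step I expect to be the main obstacle is precisely the reduction in the second paragraph: guaranteeing that the superlevel sets fed into Theorem~\ref{t:RS_measures_rad_decreasing} are genuine convex bodies, and verifying that the boundary and lower-dimensional exceptional values of $t$ are harmless for both integrals.
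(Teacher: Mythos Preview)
Your proposal is correct and follows essentially the same route as the paper: both use the layer-cake representation together with the superlevel-set identity \eqref{e:superlevel_quasi}(i), then apply Theorem~\ref{t:RS_measures_rad_decreasing} to the (closures of the) superlevel sets $S_{\geq t}(f)$, noting these are convex bodies for $0<t<\|f\|_\infty$ by quasi-concavity and integrability. You are in fact more careful than the paper about the lower-dimensional and boundary exceptional sets; the paper simply asserts these points in one line.
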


\begin{proof}
The proof follows the general ideas of those of \cite[Theorems~4.3
and~4.5]{Co}. Using Fubini's theorem, together with (i) in \eqref{e:superlevel_quasi}, we may write
\[
\Delta_{-\infty}f(x)=\int_0^{\infty}\chi_{_{S_{>t}(f)-S_{>t}(f)}}(x)
\,\dlat t
\]
and, consequently,
\begin{equation}\label{e:delta-inf}
\begin{split}
\int_{\R^n}\Delta_{-\infty}f(x)\,\dlat\mu(x) &
=\int_{\R^n}\int_0^{\infty}\chi_{_{S_{>t}(f)-S_{>t}(f)}}(x)\,\dlat t\,\dlat\mu(x)\\
& \leq\int_0^{\infty}\mu\bigl(S_{\geq t}(f)-S_{\geq
t}(f)\bigr)\,\dlat t.
\end{split}
\end{equation}
Since $f$ is quasi-concave and integrable, the closure of the superlevel
sets $S_{\geq t}(f)$ are convex bodies for all $0<t<\|f\|_{\infty}$. Thus,
we may apply \eqref{e:RS_measures_rad_decreasing} to $S_{\geq t}(f)$
(since the boundary of a convex set has null measure) which, together with
\eqref{e:delta-inf}, allows us to obtain \eqref{e:pquasitheorem}.

Now we note that, if $\dlat\mu(x)=\,\dlat x$, then we  have
\[
\min\Bigl\{\overline{\vol}\bigl(S_{\geq
t}(f)\bigr),\overline{\vol}\bigl(-S_{\geq
t}(f)\bigr)\Bigr\}=\vol\bigl(S_{\geq t}(f)\bigr),
\]
which completes the proof.
\end{proof}

Given a $p$-concave function $f:\R^n\longrightarrow[0,\infty)$, for $p\in
[-\infty,0)$, one can define the $p$-difference of $f$, which remains
$p$-concave (cf. \cite[Proposition~4.2]{Co}), by
\[
\Delta_pf(z)=\sup_{z=x+y}\bigl(f(x)^p+f(-y)^p\bigr)^{1/p}
=\sup_{z=x-y}\bigl(f(x)^p+f(y)^p\bigr)^{1/p}.
\]
where the case $p=-\infty$ is understood as the minimum between both
values. 

\medskip

Theorem \ref{t:pquasitheorem} can be established for any $p\in(-\infty,0)$. It suffices to
note that if $f$ is $p$-concave then it is also quasi-concave, and then,
we may apply inequality \eqref{e:pquasitheorem} 
for $p=-\infty$ together
with the fact that $(a^p+b^p)^{1/p}\leq \min\{a,b\}$ for each $a,b\geq 0$.

\newpage

\noindent
Hence $\Delta_{p}f\leq \Delta_{-\infty}f$.

\begin{remark}\label{r:functional_NO_RS}
As mentioned before, Theorem \ref{t:pquasitheorem} is an application of
Theorem \ref{t:RS_measures_rad_decreasing}. It is a natural and
interesting question whether \eqref{e:RS_measures_rad_decreasing} could be
directly derived from previous functional versions as \eqref{e:Colesanti}.
Just considering $\chi_{_K}\,\phi$ this is not possible because of item
(i) in \eqref{e:superlevel_quasi}: the integral of $\Delta_{-\infty}f$
does not provide (in general) the measure of $K-K$ with respect to the
density $\phi$.
\end{remark}

\subsection{Rogers-Shephard type inequalities for  $CK$ and $\conv\bigl(K\cup(-K)\bigr)$ and their functional versions}

Now we prove the corresponding Rogers-Shephard type inequalities for $CK$
and $\conv\bigl(K\cup(-K)\bigr)$, as well as their equality cases.
\begin{proof}[Proof of Theorem \ref{t:RS_CK_conv_hull_rad_dec}]
Let $f:\R^n\times[0,1]\longrightarrow[0,\infty)$ be the function given~by
\[
f(x,\theta)=\vol\Bigl(\bigl((1-\theta)K\bigr)\cap(x+\theta K)\Bigr).
\]
Note that $f$ is $(1/n)$-concave by \eqref{e:B-M_ineq}, and $\supp f=CK$.
On the one hand, taking the measure $\mu_{n+1}$ on $\R^{n+1}$ given by
$\dlat\mu_{n+1}(x,\theta)=\phi(x)\,\dlat x \,\dlat \theta$, Fubini's
theorem and the change of variable $z=(1-\theta)y$ yield
\begin{equation}\label{e:proving_RS_CK_fubini}
\begin{split}
\int_{CK}f(x,\theta)\,\dlat\mu_{n+1}(x,\theta) &
    =\int_0^1\int_{\R^n}\vol\Bigl(\bigl((1-\theta)K\bigr)\cap(x+\theta K)\Bigr)\phi(x)\,\dlat x\,\dlat\theta\\
 & =\int_0^1\int_{\R^n}\int_{\R^n}\chi_{_{(1-\theta)K}}(z)\chi_{_{x+\theta K}}(z)\,\phi(x)\,\dlat z\,\dlat x\,\dlat\theta\\
 & =\int_0^1\int_{(1-\theta)K}\int_{\R^n}\chi_{_{z-\theta K}}(x)\,\phi(x)\,\dlat x\,\dlat z\,\dlat\theta\\
 & =\int_0^1(1-\theta)^n\int_{K}\mu\bigl((1-\theta)y-\theta K\bigr)\dlat y\,\dlat\theta\\
 & \leq \vol(K)\int_0^1(1-\theta)^n\theta^n\,\dlat\theta
    \sup_{\substack{y\in K\\\theta\in(0,1]}}\frac{\mu\bigl((1-\theta)y-\theta K\bigr)}{\theta^n}\\
 & =\frac{1}{\binom{2n+1}{n}}\frac{\vol(K)}{n+1}
    \sup_{\substack{y\in K\\\theta\in(0,1]}}\frac{\mu\bigl((1-\theta)y-\theta K\bigr)}{\theta^n}.
\end{split}
\end{equation}
Now we define the function $g:CK\longrightarrow[0,\infty)$ given by
\[
g(x,\theta)=f\left(0,\frac{1}{2}\right)\left[1-\frac{\left|(x,\theta)-\bigl(0,\frac{1}{2}\bigr)\right|}%
{\rho_{_{\!CK-(0,\frac{1}{2})}}\Bigl(\bigl((x,\theta)-(0,\frac{1}{2})\bigr)/\bigl|(x,\theta)-(0,\frac{1}{2})\bigr|\Bigr)}
\right]^n,
\]
for every $(x,\theta)\neq (0,1/2)$ and $g(0,1/2)=f(0,1/2)=\vol(K)/2^n$.
Since $f^{1/n}$ is concave, then $f^{1/n}\geq g^{1/n}$ on
$\left[(0,1/2),(0,1/2)+\rho_{_{\!CK-(0,\frac{1}{2})}}(u)u\right]$, and so,
via the polar coordinates $(x,\theta')=(x,\theta)-(0,1/2)=ru$, we get

\begin{equation*}\label{e:proving_RS_CK_polar}
\begin{split}
\int_{CK}f(x, & \theta)\,\dlat\mu_{n+1}(x,\theta) =\int_{CK-(0,\frac{1}{2})} f\left(x,\theta'+\frac{1}{2}\right) \phi(x)\,\dlat x\,\dlat\theta'\\
 & =\int_{\s^{n}}\int_0^{\rho_{_{\!CK-(0,\frac{1}{2})}}(u)} r^{n}f\left(\Bigl(0,\frac{1}{2}\Bigr)+ru\right)\phi\bigl(r P_Hu\bigr)\,\dlat r\,\dlat\sigma(u)\\
 & \geq f\left(0,\frac{1}{2}\right)\int_{\s^{n}}\int_0^{\rho_{_{\!CK-(0,\frac{1}{2})}}(u)}
    \left(1-\frac{r}{\rho_{_{\!CK-(0,\frac{1}{2})}}(u)}\right)^n r^{n}\phi\bigl(r P_Hu\bigr)\,\dlat r \,\dlat\sigma(u),
\end{split}
\end{equation*}
where $H=\bigl\{(x,\theta)\in\R^{n+1}:\theta=0\bigr\}$. Then, Lemma
\ref{l:F_nonposit} yields
\begin{equation}\label{e:proving_RS_CK_polar_lemma}
\begin{split}
\int_{CK}f(x,\theta)\,\dlat\mu_{n+1}(x,\theta) &
 \geq\frac{f\left(0,\frac{1}{2}\right)}{\binom{2n+1}{n}}\int_{\s^n}\int_0^{\rho_{_{\!CK-(0,\frac{1}{2})}}(u)}r^n
    \phi\bigl(r P_Hu\bigr)\,\dlat r\,\dlat\sigma(u)\\
 & =\frac{1}{\binom{2n+1}{n}}\frac{\vol(K)}{2^n}\mu_{n+1}(CK),
\end{split}
\end{equation}
which, together with \eqref{e:proving_RS_CK_fubini}, gives
\eqref{e:R-S_mu_CK}.

Finally we notice that equality holds in \eqref{e:R-S_mu_CK} only if there
is equality in \eqref{e:proving_RS_CK_polar_lemma}. This implies, by Lemma
\ref{l:F_nonposit}, that $\phi\bigl(r P_Hu\bigr)$ is constant on
$\left(0,\rho_{_{\!CK-(0,\frac{1}{2})}}(u)\right)$ for $\sigma$-almost
every $u\in\s^{n}$. Since $\phi$ is continuous at the origin, $\mu_{n+1}$
is a constant multiple of the Lebesgue measure on $CK$ and hence $\mu$ is
so on $P_H(CK)=\conv\bigl(K\cup(-K)\bigr)$ because $\mu_{n+1}$ is a
product measure. Since $(1-\theta)y-\theta K\subset CK$ for all $y\in K$
and any $\theta\in[0,1]$, there is equality in \eqref{e:RS_CK} and
therefore, by Theorem \ref{t:RS_CK}, $K$ is a simplex. The converse is a
direct consequence of Theorem \ref{t:RS_CK}.

Now we prove \eqref{e:R-S_mu_conv}. Note that
$P_H\Bigl(CK\cap\bigl(\c_t(\phi)\times[0,1]\bigr)\Bigr)=\conv\bigl(K\cup(-K)\bigr)\cap\c_t(\phi)$
and, since $0\in K$, then
$CK\cap\bigl(\c_t(\phi)\times[0,1]\bigr)\cap H^{\bot}=[0,1]$.
Hence, Theorem \ref{t:RS_section_proy} yields
$(n+1)\vol_{n+1}\Bigl(CK\cap\bigl(\c_t(\phi)\times[0,1]\bigr)\Bigr)\geq
\vol\Bigl(\conv\bigl(K\cup(-K)\bigr)\cap\c_t(\phi)\Bigr)$,
which, together with Fubini's theorem, gives
\begin{equation*}
\begin{split}
\mu_{n+1}(CK)
& =\|\phi\|_{\infty}\int_{CK}\int_0^1\chi_{_{\c_t(\phi)}}(x)\,\dlat t\,\dlat x\,\dlat\theta\\
 & =\|\phi\|_{\infty}\int_0^1\int_{CK}\chi_{_{\c_t(\phi)\times[0,1]}}(x,\theta)\,\dlat x \,\dlat\theta\,\dlat t\\
 & =\|\phi\|_{\infty}\int_0^1\vol_{n+1}\Bigl(CK\cap\bigl(\c_t(\phi)\times[0,1]\bigr)\Bigr)\,\dlat t\\
 & \geq\|\phi\|_{\infty}\frac{1}{n+1}\int_0^1\vol\Bigl(\conv\bigl(K\cup(-K)\bigr)\cap\c_t(\phi)\Bigr)\,\dlat t\\
 & =\|\phi\|_{\infty}\frac{1}{n+1}\int_0^1\int_{\conv(K\cup(-K))}\chi_{_{\c_t(\phi)}}(x)\,\dlat x \, \dlat t\\
 & =\|\phi\|_{\infty}\frac{1}{n+1}\int_{\conv(K\cup(-K))}\int_0^\frac{\phi(x)}{\|\phi\|_{\infty}}\dlat t\,\dlat x\\
 & =\frac{1}{n+1}\int_{\conv(K\cup(-K))}\phi(x)\,\dlat x
 =\frac{\mu\Bigl(\conv\bigl(K\cup(-K)\bigr)\Bigr)}{n+1}.
\end{split}
\end{equation*}
This, together with \eqref{e:R-S_mu_CK}, shows \eqref{e:R-S_mu_conv}.
Equality in \eqref{e:R-S_mu_conv} implies, in particular, equality in
\eqref{e:R-S_mu_CK} and thus $\mu$ is a constant multiple of the Lebesgue
measure on $\conv\bigl(K\cup(-K)\bigr)$. The proof is now concluded from
the equality case of \eqref{e:RS_conv}.
\end{proof}

\begin{remark}
Taking the function
$f(x,\theta)=\vol\Bigl(\bigl((1-\theta)K\bigr)\cap\bigl(x+\theta(-L)\bigr)\Bigr)$,
and arguing as in the proof of Theorem \ref{t:RS_CK_conv_hull_rad_dec}, an
analogous result can be obtained for two arbitrary convex bodies instead
of $K$ and $-K$. Thus, if $K,L\in\K^n$ contain the origin and $\mu$ is a
measure on $\R^n$ given by $\dlat\mu(x)=\phi(x)\,\dlat x$, where
$\phi:\R^n\longrightarrow[0,\infty)$ is a radially decreasing function,
then
\begin{equation*}\label{e:R-S_CK_K,L}
\begin{split}
\frac{\mu\bigl(\conv(K\cup
L)\bigr)}{n+1}
&\leq\int_0^1\mu\bigl((1-\theta)K+\theta L\bigr)\,\dlat\theta\\
&\leq\frac{2^n}{n+1}\dfrac{\vol(K)}{\vol\bigl(K\cap(-L)\bigr)}\,\sup_{\substack{y\in
K\theta\in(0,1]}}\frac{\mu\bigl((1-\theta)y+\theta L\bigr)}{\theta^n}.
\end{split}
\end{equation*}
\end{remark}

As a consequence of Theorem \ref{t:RS_CK_conv_hull_rad_dec}, we get in
Theorem \ref{t:functional_CK_conv_hull} below functional versions of both
\eqref{e:R-S_mu_CK} and \eqref{e:R-S_mu_conv}. Regarding another
functional version of \eqref{e:RS_conv}, in the log-concave setting, we
refer the reader to \cite[Theorem~1.1]{Co}. The advantage of the
inequality we present here is that, in contrast to the above-mentioned
result, inequality \eqref{e:RS_conv} may recovered just by taking
$f=\chi_{_K}$. We use here the same notation as for
Theorem~\ref{t:pquasitheorem}.

\begin{theorem}\label{t:functional_CK_conv_hull}
Let $f:\R^n\longrightarrow[0,\infty)$ be an integrable quasi-concave
function. Let $\mu$ be a measure on $\R^n$ given by
$\dlat\mu(x)=\phi(x)\,\dlat x$, where $\phi:\R^n\longrightarrow[0,\infty)$
is radially decreasing. Then
\begin{equation}\label{e:functional_CK}
\int_0^1\int_{\R^n}\Delta_{-\infty,\theta}f(x)\,\dlat\mu(x)\,\dlat\theta
\leq\frac{2^n}{n+1}\int_0^{\infty}\sup_{\substack{y\in S_{\geq
t}(f)\\\theta\in(0,1]}}\frac{\mu\bigl((1-\theta)y-\theta S_{\geq
t}(f)\bigr)}{\theta^n}\,\dlat t
\end{equation}
and
\begin{equation}\label{e:functional_conv}
\int_{\R^n}\widetilde{\Delta}_{-\infty}f(x)\,\dlat\mu(x)\leq
2^n\int_0^{\infty}\sup_{\substack{y\in S_{\geq
t}(f)\\\theta\in(0,1]}}\frac{\mu\bigl((1-\theta)y-\theta S_{\geq
t}(f)\bigr)}{\theta^n}\,\dlat t.
\end{equation}
In particular, by choosing $\dlat\mu(x)=\dlat x$, the Lebesgue measure, we
get
\[
\int_0^1\int_{\R^n}\Delta_{-\infty,\theta}f(x)\,\dlat x\,\dlat\theta
\leq\frac{2^n}{n+1}\int_{\R^n}f(x)\,\dlat x
\]
and
\[
\int_{\R^n} \widetilde{\Delta}_{-\infty} f(x)\,\dlat x\leq 2^n \int_{\R^n}
f(x) \,\dlat x.
\]
\end{theorem}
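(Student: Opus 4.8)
The plan is to deduce both inequalities from their convex-body counterparts in Theorem~\ref{t:RS_CK_conv_hull_rad_dec} by the same layer-cake mechanism that produced Theorem~\ref{t:pquasitheorem} out of Theorem~\ref{t:RS_measures_rad_decreasing}: I would write $\Delta_{-\infty,\theta}f$ and $\widetilde{\Delta}_{-\infty}f$ through the distribution formula, identify their (strict) superlevel sets by means of \eqref{e:superlevel_quasi}, and then integrate the set inequalities \eqref{e:R-S_mu_CK} and \eqref{e:R-S_mu_conv} over the level parameter $t$.

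For \eqref{e:functional_CK} I would start from
\[
\Delta_{-\infty,\theta}f(x)=\int_0^\infty\chi_{_{S_{>t}(\Delta_{-\infty,\theta}f)}}(x)\,\dlat t=\int_0^\infty\chi_{_{(1-\theta)S_{>t}(f)-\theta S_{>t}(f)}}(x)\,\dlat t,
\]
using item (ii) of \eqref{e:superlevel_quasi}. Integrating against $\mu$ and over $\theta\in[0,1]$, applying Fubini's theorem, and enlarging $S_{>t}(f)$ to its closure $S_{\geq t}(f)$ (which changes nothing, since the boundary of a convex set is Lebesgue-null), I obtain
\[
\int_0^1\int_{\R^n}\Delta_{-\infty,\theta}f\,\dlat\mu\,\dlat\theta\leq\int_0^\infty\left(\int_0^1\mu\bigl((1-\theta)S_{\geq t}(f)-\theta S_{\geq t}(f)\bigr)\,\dlat\theta\right)\dlat t.
\]
Because $f$ is integrable and quasi-concave, $S_{\geq t}(f)$ is a convex body for every $t\in(0,\|f\|_\infty)$, so the inner integral is exactly the left-hand side of \eqref{e:R-S_mu_CK} with $K=S_{\geq t}(f)$; applying that estimate and integrating in $t$ gives \eqref{e:functional_CK}.

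For \eqref{e:functional_conv} I would argue identically, now invoking item (iii) of \eqref{e:superlevel_quasi} to get
\[
\int_{\R^n}\widetilde{\Delta}_{-\infty}f\,\dlat\mu=\int_0^\infty\mu\Bigl(\conv\bigl(S_{\geq t}(f)\cup(-S_{\geq t}(f))\bigr)\Bigr)\,\dlat t,
\]
and then bounding the integrand by \eqref{e:R-S_mu_conv} with $K=S_{\geq t}(f)$ before integrating in $t$. The two ``in particular'' statements then follow by taking $\phi\equiv1$: for the Lebesgue measure one has $\mu\bigl((1-\theta)y-\theta K\bigr)=\theta^n\vol(K)$ for every $y$ and every $\theta\in(0,1]$, so each supremum collapses to $\vol\bigl(S_{\geq t}(f)\bigr)$, and the identity $\int_0^\infty\vol\bigl(S_{\geq t}(f)\bigr)\,\dlat t=\int_{\R^n}f(x)\,\dlat x$ produces the claimed bounds.

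The step I expect to be the real obstacle is the slicewise use of \eqref{e:R-S_mu_conv}. That inequality is proved only for convex bodies containing the origin, and this hypothesis is indispensable: already for the Lebesgue measure, \eqref{e:RS_conv} fails for a body placed off the origin. Hence, to run the above argument for the conv-hull inequality one must guarantee that each $S_{\geq t}(f)$ contains the origin, which by quasi-concavity is equivalent to $f(0)=\|f\|_\infty$; note that the symmetric body $\conv\bigl(S_{\geq t}(f)\cup(-S_{\geq t}(f))\bigr)$ does contain the origin automatically, but this alone is not enough to license \eqref{e:R-S_mu_conv}. No such difficulty arises for \eqref{e:functional_CK}, since the proof of \eqref{e:R-S_mu_CK} is centered at $(0,1/2)$ and never uses $0\in K$, and therefore applies to an arbitrarily positioned superlevel set. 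The only remaining routine checks are the measurability required for Fubini's theorem and the harmless passage from $S_{>t}(f)$ to $S_{\geq t}(f)$.
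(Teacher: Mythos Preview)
Your approach is exactly the paper's: layer-cake the left-hand sides via items (ii) and (iii) of \eqref{e:superlevel_quasi}, apply Theorem~\ref{t:RS_CK_conv_hull_rad_dec} to each superlevel set $S_{\geq t}(f)$, and integrate in $t$; the Lebesgue specialisation is handled identically.

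The obstacle you flag --- that \eqref{e:R-S_mu_conv} is stated only for $K\ni 0$ while nothing in the hypotheses forces $0\in S_{\geq t}(f)$ --- is a genuine observation, and the paper's proof does not address it: it simply writes ``we may apply Theorem~\ref{t:RS_CK_conv_hull_rad_dec} to $S_{\geq t}(f)$'' and proceeds. Your diagnosis is also accurate in distinguishing the two inequalities: inspection of the proof of \eqref{e:R-S_mu_CK} shows the argument is run from the point $(0,1/2)\in CK$ (which lies in $CK$ because $0\in K-K$ always) and the hypothesis $0\in K$ is never invoked, whereas the derivation of \eqref{e:R-S_mu_conv} explicitly uses $0\in K$ to identify $CK\cap H^{\bot}$ with $[0,1]$. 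So \eqref{e:functional_CK} goes through as written, while \eqref{e:functional_conv} tacitly needs $f(0)=\|f\|_\infty$ (equivalently $0\in S_{\geq t}(f)$ for all relevant $t$) --- a point the paper leaves implicit.
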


\begin{proof}
Since $f$ is quasi-concave and integrable, the closure of the superlevel
sets $S_{\geq t}(f)$ are convex bodies for all $0<t<\|f\|_\infty$. Thus,
we may apply Theorem \ref{t:RS_CK_conv_hull_rad_dec} to $S_{\geq t}(f)$
(since the boundary of a convex set has null measure) to obtain
\begin{equation*}
\int_0^1\mu\bigl((1-\theta)S_{>t}(f)-\theta S_{>t}(f)\bigr)\,\dlat\theta
\leq \frac{2^n}{n+1}\,\sup_{\substack{y\in S_{\geq
t}(f)\\\theta\in(0,1]}}\frac{\mu\bigl((1-\theta)y-\theta S_{\geq
t}(f)\bigr)}{\theta^n}
\end{equation*}
and
\begin{equation*}
\mu\Bigl(\conv\bigl(S_{>t}(f)\cup(-S_{>t}(f))\bigr)\Bigr)\leq
2^n\,\sup_{\substack{y\in S_{\geq
t}(f)\\\theta\in(0,1]}}\frac{\mu\bigl((1-\theta)y-\theta S_{\geq
t}(f)\bigr)}{\theta^n}.
\end{equation*}
Integrating on $t\in[0,\infty)$, \eqref{e:functional_CK} and
\eqref{e:functional_conv} now follow by applying Fubini's theorem together
with (ii) and (iii) in \eqref{e:superlevel_quasi}, respectively. Finally,
if $\dlat\mu(x)=\,\dlat x$, then we have
\[
\sup_{\substack{y\in S_{\geq
t}(f)\\\theta\in(0,1]}}\frac{\vol\bigl((1-\theta)y-\theta S_{\geq
t}(f)\bigr)}{\theta^n}=\vol\bigl(S_{\geq t}(f)\bigr).
\]
This concludes the proof.
\end{proof}

\section{A projection-section inequality for quasi-concave functions}\label{s:functions}

We start this section by showing a general result for functions that will
be exploited throughout the rest of the paper.

\begin{proposition}\label{l:integral ineqs}
Let $\mu$ be a measure on $\R^n$ given by $\dlat\mu(x)=\phi(x)\,\dlat x$,
where $\phi:\R^n\longrightarrow[0,\infty)$ is quasi-concave and such that
$\|\phi\|_{\infty}=\phi(0)$. Let $f:\R^n\longrightarrow[0,\infty)$ be a
$p$-concave function, $p>0$, with $\|f\|_{\infty}=f(0)$, and let
$g:\R^n\longrightarrow[0,\infty)$ be a measurable function. Then
\begin{equation}\label{e:int(suppf_C_theta)_final}
\int_{\supp f}\int_0^1(1-\theta^{p})^n
    g\bigl((1-\theta^{p})x\bigl)\,\dlat \theta\,\dlat\mu(x)
\leq\dfrac{1}{\|f\|_{\infty}}\int_{\supp f}g(x)f(x)\,\dlat\mu(x).
\end{equation}
Moreover, if $\supp f$ is bounded, $g$ is non-zero on $\supp f$ and $\phi$
is continuous at the origin, equality in
\eqref{e:int(suppf_C_theta)_final} implies that $\mu$ is a constant
multiple of the Lebesgue measure on $\supp f$.
\end{proposition}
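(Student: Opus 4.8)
The plan is to follow the layer-by-layer scaling strategy used in the proof of Theorem \ref{t:RS_measures_rad_decreasing}, but organised around the dilation $x \mapsto (1-\theta^p)x$ that appears explicitly in the statement. First I would apply Tonelli's theorem to the nonnegative integrand on the left-hand side and fix $\theta$, writing $\beta = 1-\theta^p \in [0,1]$. For fixed $\theta$ I would perform the change of variables $y = \beta x$ in the inner spatial integral; the Jacobian $\beta^{-n}$ exactly cancels the factor $(1-\theta^p)^n = \beta^n$, turning the integrand into $g(y)\,\phi(y/\beta)$ integrated over $\beta\,\supp f$. Since $f$ is $p$-concave with $\|f\|_{\infty} = f(0)$, its support is convex and contains the origin, so $\beta\,\supp f \subseteq \supp f$.

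The first key inequality uses that $\phi$ is radially decreasing (it is quasi-concave with $\|\phi\|_{\infty} = \phi(0)$). As $1/\beta \geq 1$, this gives $\phi(y/\beta) \leq \phi(y)$, and since $g \geq 0$ I may bound the $\theta$-slice from above by $\int_{\beta\,\supp f} g(y)\phi(y)\,\dlat y$. Integrating back over $\theta$ and applying Tonelli once more, I arrive at $\int_{\supp f} g(y)\phi(y)\bigl[\int_0^1 \chi_{(1-\theta^p)\supp f}(y)\,\dlat\theta\bigr]\dlat y$. Writing $y = |y|u$ with $u \in \s^{n-1}$ and using the radial function $\rho_{_{\!\supp f}}(u)$, the condition $y \in (1-\theta^p)\supp f$ is equivalent to $\theta \le \bigl(1 - |y|/\rho_{_{\!\supp f}}(u)\bigr)^{1/p}$, so the bracketed integral equals exactly $\bigl(1 - |y|/\rho_{_{\!\supp f}}(u)\bigr)^{1/p}$.

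The second key inequality is the comparison $\bigl(1 - |y|/\rho_{_{\!\supp f}}(u)\bigr)^{1/p} \leq f(y)/\|f\|_{\infty}$. This follows from the $p$-concavity of $f$: along the ray through $u$, the map $r \mapsto f(ru)^p$ is concave on $[0,\rho_{_{\!\supp f}}(u)]$ with value $\|f\|_{\infty}^p$ at $r=0$ and a nonnegative value at $r = \rho_{_{\!\supp f}}(u)$, whence $f(ru)^p \geq (1 - r/\rho_{_{\!\supp f}}(u))\|f\|_{\infty}^p$; the case $\rho_{_{\!\supp f}}(u) = \infty$ is handled by noting that a nonnegative concave function with maximum at the origin on a half-line must be constant, so both sides equal $1$. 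Substituting this bound gives precisely the right-hand side $\|f\|_{\infty}^{-1}\int_{\supp f} g(y)f(y)\,\dlat\mu(y)$.

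For the equality case, since the chain consists of two inequalities, equality in \eqref{e:int(suppf_C_theta)_final} forces equality at the radially-decreasing step: with the integrand nonnegative, $\int_0^1\int_{(1-\theta^p)\supp f} g(y)\bigl(\phi(y) - \phi(y/(1-\theta^p))\bigr)\dlat y\,\dlat\theta = 0$. As $g$ is non-zero on $\supp f$, this yields $\phi(y) = \phi(\lambda y)$ for almost every $y$ and $\lambda \in [1, \rho_{_{\!\supp f}}(y/|y|)/|y|]$, i.e. $\phi$ is a.e. constant along almost every ray inside $\supp f$. Continuity of $\phi$ at the origin, together with the radial monotonicity, then identifies that constant with $\phi(0) = \|\phi\|_{\infty}$, so $\phi$ equals $\|\phi\|_{\infty}$ almost everywhere on $\supp f$ and $\mu$ is a constant multiple of the Lebesgue measure there. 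I expect the main obstacle to be bookkeeping rather than conceptual: justifying the change of variables and both applications of Tonelli cleanly (handling the null set $\theta = 1$, and the possibly unbounded directions of $\supp f$ in the $p$-concavity comparison), and making the almost-everywhere ray argument in the equality case precise enough to invoke continuity at the origin.
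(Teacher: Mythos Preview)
Your argument is correct. The organization differs from the paper's: the paper performs a double layer-cake decomposition, introducing the superlevel sets $\c_t(\phi)$ of $\phi$ alongside $\c_\theta(f)$, and reduces the whole inequality to the single set inclusion
\[
(1-\theta^p)\bigl[(\supp f)\cap\c_t(\phi)\bigr]\subset\c_\theta(f)\cap\c_t(\phi),
\]
over which $g$ is then integrated in $x$, $\theta$ and $t$. You instead decouple the two ingredients: after the change of variables $y=(1-\theta^p)x$ you use the radial decay of $\phi$ pointwise, and the $p$-concavity of $f$ enters through the explicit ray bound $(1-|y|/\rho_{_{\!\supp f}}(u))^{1/p}\le f(y)/\|f\|_\infty$, which is precisely the pointwise form of the paper's inclusion $(1-\theta^p)\supp f\subset\c_\theta(f)$. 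Both routes rest on exactly the same two facts; yours avoids the auxiliary $t$-variable and is arguably more transparent, while the paper's single-inclusion packaging treats $f$ and $\phi$ more symmetrically. For the equality case the paper argues by contradiction, locating a boundary point of $(\supp f)\cap\c_t(\phi)$ that witnesses strict inclusion whenever $\phi$ is not constant on $\supp f$; your direct almost-everywhere ray argument is shorter and works equally well. The bookkeeping issues you flag (Tonelli, the null set $\theta=1$, unbounded rays, boundary points of $\supp f$) are all genuine but routine, affecting only measure-zero sets.
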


\begin{proof}
Since $f$ is $p$-concave, then $\c_{\theta}(f)$ is a convex set for every
$\theta\in[0,1]$. We notice  that
\begin{equation*}\label{e:C_theta1_C_theta2}
\dfrac{\c_{\theta_1}(f)}{1-\theta_1^p}\subset\dfrac{\c_{\theta_2}(f)}{1-\theta_2^p}
\end{equation*}
 for
$0\leq\theta_1\leq\theta_2<1$. In particular, taking $\theta_1=0$, we have
\begin{equation}\label{e:suppf_C_theta}
\supp f\subset\frac{1}{1-\theta^p}\c_{\theta}(f)\quad\text{ for any }\;
\theta\in[0,1),
\end{equation}
and hence
\begin{equation}\label{e:key_inclusion_for_eq}
(\supp
f)\cap\c_t(\phi)\subset\left(\frac{1}{1-\theta^p}\c_{\theta}(f)\right)\cap\c_t(\phi)
\subset\frac{\c_{\theta}(f)\cap\c_t(\phi)}{1-\theta^p}
\end{equation}
for all $\theta\in[0,1)$ and every $t\in[0,1]$. Therefore
\[
\bigl(1-\theta^p\bigr)\bigl[(\supp f)\cap\c_t(\phi)\bigr]
\subset\c_{\theta}(f)\cap\c_t(\phi),
\]
which yields
\begin{equation}\label{e:int(suppf_C_theta)}
\int_0^1\int_0^1\int_{(1-\theta^p)[(\supp f)\cap\c_t(\phi)]}g(x)\,\dlat
x\,\dlat\theta\,\dlat t
\leq\int_0^1\int_0^1\int_{\c_{\theta}(f)\cap\c_t(\phi)}g(x)\,\dlat
x\,\dlat\theta\,\dlat t.
\end{equation}
Now we compute both sides of inequality (\ref{e:int(suppf_C_theta)}). On
the one hand, by Fubini's theorem and the change of variable
$x=\bigl(1-\theta^p\bigr)y$, we get
\[
\begin{split}
\int_0^1\int_0^1 & \int_{(1-\theta^p)[(\supp f)\cap\c_t(\phi)]}g(x)\,\dlat x\,\dlat\theta\,\dlat t \\
& =\int_0^1\int_0^1\int_{(\supp f)\cap\c_t(\phi)}g\bigl((1-\theta^p)y\bigr)
    (1-\theta^p)^n\,\dlat y\,\dlat\theta\,\dlat t\\
 & =\int_{\supp f}\int_0^1(1-\theta^p)^ng\bigl((1-\theta^p)y\bigl)
    \int_0^1\chi_{_{\c_t(\phi)}}(y)\,\dlat t\,\dlat\theta\,\dlat y\\
 & =\int_{\supp f}\int_0^1(1-\theta^p)^ng\bigl((1-\theta^p)y\bigl)
    \frac{\phi(y)}{\|\phi\|_{\infty}}\,\dlat\theta\,\dlat y\\
 & =\dfrac{1}{\|\phi\|_{\infty}}\int_{\supp f}\int_0^1(1-\theta^p)^n
    g\bigl((1-\theta^p)y\bigl)\,\dlat\theta\,\dlat\mu(y).
\end{split}
\]
On the other hand, using again Fubini's theorem,
\[
\begin{split}
\int_0^1\int_0^1\int_{\c_{\theta}(f)\cap\c_t(\phi)}g(x)
    \,\dlat x\,\dlat\theta\,\dlat t
 & =\int_0^1\int_0^1\int_{\R^n}g(x)\,\chi_{_{\c_{\theta}(f)}}(x)\chi_{_{\c_t(\phi)}}(x)\,\dlat x\,\dlat\theta\,\dlat t\\
 & =\int_{\R^n}g(x)\int_0^1\chi_{_{\c_t(\phi)}}(x)\int_0^1\chi_{_{\c_{\theta}(f)}}(x)\,\dlat\theta\,\dlat t\,\dlat x\\
 & =\int_{\supp f}g(x)\dfrac{f(x)}{\|f\|_{\infty}}\dfrac{\phi(x)}{\|\phi\|_{\infty}}\,\dlat x\\
 & =\dfrac{1}{\|f\|_{\infty}\|\phi\|_{\infty}}\int_{\supp f}g(x)\,f(x)\,\dlat\mu(x).
\end{split}
\]
Thus, \eqref{e:int(suppf_C_theta)_final} follows from inequality
(\ref{e:int(suppf_C_theta)}).

Now we deal with the equality case. First we observe that since $\supp f$
is a bounded set and $f$ is $p$-concave, then $\c_{\theta}(f)$ is a
bounded convex set for all $\theta\in[0,1)$.

Without loss of generality we may assume that $\phi$ is upper
semicontinuous. Indeed, otherwise we would work with its upper closure,
which is determined via the closure of the superlevel sets of $\phi$ (see
\cite[page~14 and Theorem~1.6]{RoWe}) and thus defines the same measure
because of Fubini's theorem together with the facts that all the
superlevel sets of $\phi$ are convex (since it is quasi-concave) and the
boundary of a convex set has null (Lebesgue) measure. Then its superlevel
sets $\c_t(\phi)$ are closed (cf. \cite[Theorem~1.6]{RoWe}) for every
$t\in[0,1]$. In the same way, $f$ may be assumed to be upper
semicontinuous (in fact, it is already continuous in the interior of its
support, because of the $p$-concavity). Moreover, since the definitions of
both $\c_\theta(f)$ and $\c_t(\phi)$ involve the essential supremum, these
superlevel sets have positive volume for all $\theta<1$ and $t<1$, and
therefore both $\c_\theta(f)$ and $\c_t(\phi)$ are closed convex sets with
non-empty interior, for any $\theta,t\in[0,1)$. From the continuity of
$\phi$ at the origin, we know that $0\in\inter\c_t(\phi)$ for all $t<1$
and then $0\in\c_\theta(f)\cap\inter\c_t(\phi)$ because
$f(0)=\|f\|_{\infty}$. Hence, and taking into account that $\supp f$ (and
thus $\c_\theta(f)$ for any $\theta\in[0,1]$) is bounded, both
$\c_\theta(f)\cap(1-\theta^{p})\c_t(\phi)$ and
$\c_\theta(f)\cap\c_t(\phi)$ are convex bodies for all $\theta,t\in[0,1)$.

Thus, if equality holds in \eqref{e:int(suppf_C_theta)_final} then, in
particular, there is equality in the right-hand inclusion of
\eqref{e:key_inclusion_for_eq} for almost all $\theta\in[0,1]$ and almost
all $t\in[0,1]$, because $g>0$ on $\supp f$.

Let us assume that there exists $x_0\in\supp f$ such that
$\phi(x_0)<\|\phi\|_{\infty}$. Taking
$t\in\bigl(\phi(x_0)/\|\phi\|_{\infty},1\bigr]$, since
$x_0\not\in\c_t(\phi)$ then we have that
\[
(\supp f)\cap\c_t(\phi)\subsetneq\supp f.
\]
Let $x_t\in\bd\bigl((\supp f)\cap\c_t(\phi)\bigr)\backslash\bd(\supp f)$.
Since both sets are convex bodies, we can always take $x_t\neq 0$. Then
for all $t\in\bigl(\phi(x_0)/\|\phi\|_{\infty},1\bigr]$, the continuity of
$f$ on $\inter(\supp f)$ yields the existence of $\theta_t\in(0,1)$ such
that
\[
x_t\in\c_{\theta}(f)\cap\c_t(\phi)\quad\text{ for all }\;
\theta\in[0,\theta_t).
\]
However, since $x_t\in\bd\c_t(\phi)$ and $0\in\inter\c_t(\phi)$,
\[
x_t\not\in\c_{\theta}(f)\cap\bigl(1-\theta^p\bigr)\c_t(\phi).
\]
This contradicts the equality in the right-hand inclusion of
\eqref{e:key_inclusion_for_eq} for almost every $\theta\in[0,1]$ and
$t\in[0,1]$.

Therefore we may conclude that $\phi(x)\geq\|\phi\|_{\infty}$ for all
$x\in\supp f$ and thus $\phi\equiv\|\phi\|_{\infty}$ almost everywhere on
$\supp f$. This implies that $\mu$ is a constant multiple of the Lebesgue
measure on $\supp f$.
\end{proof}

It is an interesting question whether Proposition \ref{l:integral ineqs}
can be adapted to log-concave functions, i.e., when $p=0$. We notice that
the above approach cannot be followed in this case. Indeed, considering
e.g. the function $f:\R\longrightarrow[0,\infty)$ given by
$f(x)=e^{-x^2}$, we have that $\supp f=\R$ whereas $\c_{\theta}(f)$ is a
convex body for all $t\in(0,1]$. Hence, there is no chance to get an
inclusion of the type \eqref{e:suppf_C_theta}, i.e., $\lambda(\theta)\supp
f\subset\c_{\theta}(f)$ for any $\theta\in[0,1]$ and some
$\lambda(\theta)>0$.

\smallskip

In what follows we use Proposition \ref{l:integral ineqs} to prove several
results, including Theorem \ref{t:functional_RS}. Let us first introduce a
helpful family of constants and notice a few facts. We denote by
\[
\alpha^n_{p,q}=\int_0^1(1-\theta^p)^n\,\theta^{p\,q}\,\dlat\theta
    =\dfrac{\Gamma\left(\frac{1}{p}+q\right)\Gamma(1+n)}{p\,\Gamma\left(1+n+\frac{1}{p}+q\right)},
\]
for each $p,q>0$. Let us assume that $g$ is concave. Then
\[
g\bigl((1-\theta^p)x\bigr)\geq\theta^pg(0)+(1-\theta^p)g(x),
\]
and so, we get from \eqref{e:int(suppf_C_theta)_final} that
\begin{equation}\label{e:int(suppf_C_theta)_g_concave}
\alpha^n_{p,1}\,g(0)\,\mu(\supp f)+\alpha^{n+1}_{p,0}\int_{\supp
f}g(x)\,\dlat\mu(x)\leq\dfrac{1}{\|f\|_{\infty}}\int_{\supp
f}g(x)\,f(x)\,\dlat\mu(x).
\end{equation}
Another possibility is assuming that $g$ is radially decreasing. Then,
from \eqref{e:int(suppf_C_theta)_final}, we get
\begin{equation}\label{e:int(suppf_C_theta)_g_rad_dec}
\alpha^n_{p,0}\int_{\supp f}g(x)\,\dlat\mu(x)
\leq\dfrac{1}{\|f\|_{\infty}}\int_{\supp f}g(x)\,f(x)\,\dlat\mu(x).
\end{equation}
We point out that $\alpha^n_{p,0}=\alpha^n_{p,1}+\alpha^{n+1}_{p,0}$,
which shows that the expression on the left-hand side of
\eqref{e:int(suppf_C_theta)_g_concave} and that of
\eqref{e:int(suppf_C_theta)_g_rad_dec} are in a sense ``similar'', as
shown by considering the constant function $g(x)=1$. Indeed, when $g\equiv
1$, \eqref{e:int(suppf_C_theta)_g_rad_dec} reads
\begin{equation}\label{e:int(suppf_C_theta)_g_uno}
\alpha^n_{p,0}\,\mu(\supp f)\leq\dfrac{1}{\|f\|_{\infty}}\int_{\supp
f}f(x)\,\dlat\mu(x).
\end{equation}
Moreover, it can be proved that \eqref{e:int(suppf_C_theta)_g_uno} remains
true even in the more general case when $\|f\|_{\infty}=f(x_0)$ for an
arbitrary $x_0\in\R^n$, and without the maximality assumption for $\phi$.

\begin{corollary}\label{c:|f|_x0}
Let $f:\R^n\longrightarrow[0,\infty)$ be a $p$-concave function, $p>0$,
with $\|f\|_{\infty}=f(x_0)$ for some $x_0\in\R^n$, and let $\mu$ be a
measure on $\R^n$ given by $\dlat\mu(x)=\phi(x)\,\dlat x$, where
$\phi:\R^n\longrightarrow[0,\infty)$ is a bounded quasi-concave function.
Then
\begin{equation}\label{e:mu(suppf)_x0}
\alpha^n_{p,0}\frac{\phi(x_0)}{\|\phi\|_{\infty}}\,\mu(\supp
f)\leq\dfrac{1}{\|f\|_{\infty}}\int_{\supp f}f(x)\,\dlat\mu(x).
\end{equation}
Moreover, if $\supp f$ is bounded and $\phi$ is continuous at $x_0$,
equality in \eqref{e:mu(suppf)_x0} implies that $\mu$ is a constant
multiple of the Lebesgue measure on $\supp f$.
\end{corollary}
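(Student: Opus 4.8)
The plan is to reduce to the situation already settled by \eqref{e:int(suppf_C_theta)_g_uno}, which is the $g\equiv 1$ instance of Proposition \ref{l:integral ineqs} and requires the maximum of \emph{both} the density and of $f$ to be located at the origin. First I would translate. Replacing $f$ and $\phi$ by $f(\,\cdot\,+x_0)$ and $\phi(\,\cdot\,+x_0)$ leaves both sides of \eqref{e:mu(suppf)_x0} unchanged (the Jacobian is $1$, $\supp f$ merely shifts, and $\|f\|_{\infty},\|\phi\|_{\infty}$ are translation invariant), so I may assume $x_0=0$; after this $\|f\|_{\infty}=f(0)$, whereas $\phi$ need not attain its essential supremum at $0$.

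The device to restore the missing hypothesis on the density is to cap $\phi$ at its value at the origin: set $\psi=\min\{\phi,\phi(0)\}$. As the pointwise minimum of the quasi-concave $\phi$ with a constant, $\psi$ is again quasi-concave, and one checks $\|\psi\|_{\infty}=\phi(0)=\psi(0)$ (if $\phi(0)<\|\phi\|_{\infty}$ then $\{\phi\geq\phi(0)\}$ has positive measure, while if $\phi(0)=\|\phi\|_{\infty}$ then $\psi=\phi$ a.e.). Hence $\psi$ is an admissible density with maximum at $0$, so I may apply \eqref{e:int(suppf_C_theta)_g_uno} to the measure $\nu$ given by $\dlat\nu=\psi\,\dlat x$ and to $f$, obtaining $\alpha^n_{p,0}\,\nu(\supp f)\leq\|f\|_{\infty}^{-1}\int_{\supp f}f\,\dlat\nu$. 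I would then close the argument by a sandwich using the elementary a.e.\ bounds $\tfrac{\phi(0)}{\|\phi\|_{\infty}}\phi\leq\psi\leq\phi$ (the first inequality holds since $\tfrac{\phi(0)}{\|\phi\|_{\infty}}\phi\leq\phi$ and $\tfrac{\phi(0)}{\|\phi\|_{\infty}}\phi\leq\phi(0)$). Indeed, these give $\tfrac{\phi(0)}{\|\phi\|_{\infty}}\mu(\supp f)\leq\nu(\supp f)$ on the left and $\int_{\supp f}f\,\dlat\nu\leq\int_{\supp f}f\,\dlat\mu$ on the right, which, chained with the displayed inequality, yields \eqref{e:mu(suppf)_x0}.

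For the equality case I would track the three inequalities just used, all of which must become equalities. Equality in the middle one is governed by the equality case of Proposition \ref{l:integral ineqs} (with $g\equiv1$, which is nonzero on $\supp f$); the continuity of $\psi$ at the origin there required is inherited from the continuity of $\phi$ at $x_0$, so this step gives that $\nu$ is a constant multiple of Lebesgue measure on $\supp f$, i.e.\ $\psi\equiv\phi(0)$ a.e.\ on $\supp f$. Equality in the left-hand pointwise estimate then forces $\tfrac{\phi(0)}{\|\phi\|_{\infty}}\phi=\psi=\phi(0)$ a.e.\ on $\supp f$ (here $\phi(x_0)>0$), whence $\phi\equiv\|\phi\|_{\infty}$ a.e.\ on $\supp f$; that is, $\mu$ restricted to $\supp f$ is a constant multiple of Lebesgue measure.

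The individual steps are routine; the point demanding genuine care is the passage through Proposition \ref{l:integral ineqs}. The main obstacle will be verifying that the capped density $\psi$ satisfies \emph{every} hypothesis needed to invoke that proposition and its equality case: that its essential supremum equals exactly $\phi(0)$ — a value that is invisible at the single point $0$ and must instead be read off the positive-measure superlevel set $\{\phi\geq\phi(0)\}$ — and that capping preserves both quasi-concavity and continuity at the origin. Once this is secured, the sandwich inequalities and the propagation of the equality conditions follow directly.
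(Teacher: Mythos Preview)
Your argument is correct. At its core it rests on the same estimate the paper uses, namely
\[
\frac{\phi(x_0)}{\|\phi\|_{\infty}}\,\phi(x)\;\leq\;\min\bigl\{\phi(x),\phi(x_0)\bigr\}\;\leq\;\phi(x)\quad\text{a.e.},
\]
which in the paper appears as the step
$\tfrac{\phi(x_0)}{\|\phi\|_{\infty}}\tfrac{\phi(x)}{\|\phi\|_{\infty}}\leq\min\bigl\{\tfrac{\phi(x)}{\|\phi\|_{\infty}},\tfrac{\phi(x_0)}{\|\phi\|_{\infty}}\bigr\}$ together with the restriction of the layer--cake integration to $t\in\bigl[0,\phi(x_0)/\|\phi\|_{\infty}\bigr]$. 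The difference is in packaging: the paper translates only $f$ and then \emph{re-derives} the inclusion argument of Proposition~\ref{l:integral ineqs} with $x_0$ built in, whereas you translate both $f$ and $\phi$, cap the density to force $\|\psi\|_{\infty}=\psi(0)$, and invoke Proposition~\ref{l:integral ineqs} as a black box. Your route is more modular and avoids repeating the superlevel-set inclusions; the paper's route makes the geometry of those inclusions (and hence the equality analysis) slightly more transparent. For the equality case the paper first deduces $\phi(x_0)=\|\phi\|_{\infty}$ and then appeals to the equality case of Proposition~\ref{l:integral ineqs}; your argument reaches the same conclusion through the sandwich, though you should note separately the trivial case $\phi(x_0)=0$, where equality in \eqref{e:mu(suppf)_x0} forces $\int_{\supp f}f\,\dlat\mu=0$ and hence $\phi=0$ a.e.\ on $\supp f$.
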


\begin{proof}
The proof follows similar steps as those of Proposition \ref{l:integral
ineqs}, but with some key variations. We will highlight these differences.

We consider the function $\psi:\R^n\longrightarrow[0,\infty)$ given by
$\psi(x)=f(x+x_0)$, which satisfies $\|\psi\|_{\infty}=\|f\|_{\infty}$ and
$\supp\psi=(\supp f)-x_0$. Then (cf. \eqref{e:suppf_C_theta})
\begin{equation}\label{e:suppf_C_theta2}
\supp\psi\subset\frac{1}{1-\theta^p}\c_{\theta}(\psi)\quad\text{ for all
}\; \theta\in[0,1).
\end{equation}
We observe that $y\in\c_{\theta}(\psi)$ if and only if $f(y+x_0)\geq
\theta\|f\|_{\infty}$, or equivalently, when $y+x_0\in\c_{\theta}(f)$. Hence,
$\c_{\theta}(\psi)+x_0=\c_{\theta}(f)$, and thus \eqref{e:suppf_C_theta2}
turns into
\[
(\supp
f)-x_0\subset\frac{1}{1-\theta^p}\bigl(\c_{\theta}(f)-x_0\bigr)\quad\text{
for all }\; \theta\in[0,1).
\]
Therefore
\begin{equation*}\label{e:key_inclusion_for_eq2}
\begin{split}
\bigl((\supp f)-x_0\bigr)\cap\bigl(\c_t(\phi)-x_0\bigr) &
\subset\left(\frac{1}{1-\theta^p}\bigl(\c_{\theta}(f)-x_0\bigr)\right)\cap\bigl(\c_t(\phi)-x_0\bigr)\\
 & \subset\frac{1}{1-\theta^p}\Bigl(\bigl[\c_{\theta}(f)\cap\c_t(\phi)\bigr]-x_0\Bigr)
\end{split}
\end{equation*}
for all $\theta\in[0,1)$ and every
$t\in\bigl[0,\phi(x_0)/\|\phi\|_{\infty}\bigr]$, where in the last
inclusion we have used that $x_0\in\c_t(\phi)$. Consequently, we obtain
\begin{equation}\label{e:incl_I}
(1-\theta^p)\Bigl(\bigl[(\supp
f)\cap\c_t(\phi)\bigr]-x_0\Bigr)\subset\bigl(\c_{\theta}(f)\cap\c_t(\phi)\bigr)-x_0.
\end{equation}
Next, integrating over $x\in\R^n$ the constant function $1$, using
(\ref{e:incl_I}) and the change of variable $x=(1-\theta^p)y$, we
get
\[
(1-\theta^p)^n\int_{[(\supp f)\cap\c_t(\phi)]-x_0}\dlat y
\leq\int_{[\c_{\theta}(f)\cap\c_t(\phi)]-x_0}\dlat y,
\]
which yields
\begin{equation}\label{e:int_supp_Ct<int_Ctheta_Ct}
(1-\theta^p)^n\int_{(\supp f)\cap\c_t(\phi)}\dlat x
\leq\int_{\c_{\theta}(f)\cap\c_t(\phi)}\dlat x.
\end{equation}
Now, computing the left-hand side in \eqref{e:mu(suppf)_x0}, we get
\[
\begin{split}
\alpha^n_{p,0}\frac{\phi(x_0)}{\|\phi\|_{\infty}}\,\mu(\supp f)
 & =\alpha^n_{p,0}\|\phi\|_{\infty}\int_{\supp f}\frac{\phi(x_0)}{\|\phi\|_{\infty}}\,\frac{\phi(x)}{\|\phi\|_{\infty}}\,\dlat x\\
 & \leq\|\phi\|_{\infty}\int_0^1(1-\theta^p)^n\,\dlat\theta
    \int_{\supp f}\min\left\{\frac{\phi(x)}{\|\phi\|_{\infty}},\frac{\phi(x_0)}{\|\phi\|_{\infty}}\right\}\,\dlat x\\
 & =\|\phi\|_{\infty}\int_0^1\int_0^{\frac{\phi(x_0)}{\|\phi\|_{\infty}}}
    (1-\theta^p)^n\int_{(\supp f)\cap\c_t(\phi)} \dlat x\,\dlat t\,\dlat\theta.
\end{split}
\]
Applying \eqref{e:int_supp_Ct<int_Ctheta_Ct} we obtain the desired
inequality. Indeed from the above computation we get
\begin{equation*}
\begin{split}
\alpha^n_{p,0}\frac{\phi(x_0)}{\|\phi\|_{\infty}}\,\mu(\supp f) &
    \leq\|\phi\|_{\infty}\int_0^1\int_0^{\frac{\phi(x_0)}{\|\phi\|_{\infty}}}\int_{\c_{\theta}(f)\cap\c_t(\phi)}\dlat x\,\dlat t\,\dlat \theta\\
 & =\frac{\|\phi\|_{\infty}}{\|f\|_{\infty}}\int_{\supp f}f(x)\int_0^{\frac{\phi(x_0)}{\|\phi\|_{\infty}}}\chi_{_{\c_t(\phi)}}(x)\,\dlat t\,\dlat x\\
 & \leq\frac{\|\phi\|_{\infty}}{\|f\|_{\infty}}\int_{\supp f}f(x)\int_0^1\chi_{_{\c_t(\phi)}}(x)\,\dlat t\,\dlat x\\
 &=\frac{1}{\|f\|_{\infty}}\int_{\supp f}f(x)\,\dlat \mu(x).
\end{split}
\end{equation*}
For the proof of the equality case we observe, on the one hand, that if
equality holds in \eqref{e:mu(suppf)_x0} then, in particular,
\[
\int_{\supp
f}f(x)\int_{\frac{\phi(x_0)}{\|\phi\|_{\infty}}}^1\chi_{_{\c_t(\phi)}}(x)\,\dlat
t\,\dlat x=0,
\]
which yields $\phi(x_0)=\esssup_{x\in \supp f}\phi(x)$.

On the other hand, we may replace $\|\phi\|_{\infty}$ by $\esssup_{x\in
\supp f}\phi(x)$ in the above argument to get also
\begin{equation*}\label{e:remark_lemma}
\alpha_{p,0}^n\frac{\phi(x_0)}{\esssup_{x\in \supp f}\phi(x)}\,\mu(\supp
f) \leq\frac{1}{\|f\|_{\infty}}\int_{\supp f}f(x)\,\dlat \mu(x),
\end{equation*}
and since
\[
\alpha^n_{p,0}\frac{\phi(x_0)}{\|\phi\|_{\infty}}\,\mu(\supp
f)\leq\alpha_{p,0}^n\,\frac{\phi(x_0)}{\esssup_{x\in \supp
f}\phi(x)}\,\mu(\supp f)=\alpha_{p,0}^n\,\mu(\supp f),
\]
equality in \eqref{e:mu(suppf)_x0} implies that
$\phi(x_0)=\|\phi\|_{\infty}$.

Finally, due to the fact that $\phi(x_0)=\|\phi\|_{\infty}$, the rest of
the proof of the equality case is entirely analogous to the one in
Proposition \ref{l:integral ineqs}, and we do not repeat it here.
\end{proof}

As an application of Proposition \ref{l:integral ineqs}, and the
above-mentioned consequences of it, we show Theorem~\ref{t:functional_RS}.

\begin{proof}[Proof of Theorem~\ref{t:functional_RS}]
For all $t\in[0,1]$, the function
$\varphi_t:P_H\c_t(f)\longrightarrow[0,\infty)$ given by
\[
\varphi_t(x)=\vol_k\bigl(\c_t(f)\cap(x+H^{\bot})\bigr)
\]
is ($1/k$)-concave, because of the Brunn-Minkowski inequality
\eqref{e:B-M_ineq}, and $\supp\varphi_t=P_H\c_t(f)$. By hypothesis we have
$\|\varphi_t\|_{\infty}=\varphi_t(0)$. Then, by applying
\eqref{e:int(suppf_C_theta)_g_rad_dec}  to $\varphi_t$, we get
\begin{equation}\label{e:gf_t}
\alpha_{1/k,0}^{n-k}\int_{P_H\c_t(f)}g(x)\,\dlat x
\leq\dfrac{1}{\|\varphi_t\|_{\infty}}\int_{H}g(x)\,\varphi_t(x)\,\dlat x
\end{equation}
and hence, integrating each side of inequality  (\ref{e:gf_t}) over
$t\in[0,1]$ and noticing that $\alpha_{1/k,0}^{n-k}=\binom{n}{k}^{-1}$, it
follows that
\begin{equation}\label{e:gf_ineq_para_phi_t_2}
\int_0^1\int_{P_H\c_t(f)}g(x)\,\dlat x
\int_{H^{\bot}}\chi_{_{\c_t(f)}}(y)\dlat y\,\dlat t
\leq\binom{n}{k}\int_0^1\int_{H}g(x)\int_{x+H^{\bot}}\chi_{_{\c_t(f)}}(y)\dlat
y\,\dlat x\,\dlat t.
\end{equation}
On the one hand, by Fubini's theorem and noticing that
\[
P_H\c_t(f)\supset P_H\Bigl(\bigl\{x\in\R^n: f(x)>
t\|f\|_{\infty}\bigr\}\Bigr)=\bigl\{x\in H: P_Hf(x)>
t\|f\|_{\infty}\bigr\},
\]
we obtain
\begin{equation}\label{e:left}
\begin{split}
\int_0^1\int_{H}g(x)\chi_{_{P_H\c_t(f)}}(x)\,\dlat x & \int_{H^{\bot}}
    \chi_{_{\c_t(f)}}(y)\,\dlat y\,\dlat t\\
 & =\int_{H}\int_{H^{\bot}}g(x)\int_0^1\chi_{_{P_H\c_t(f)}}(x)\chi_{_{\c_t(f)}}(y)\,\dlat t\,\dlat y\,\dlat x\\
 & \geq\int_{H}\int_{H^{\bot}}g(x)\min\left\{\frac{P_Hf(x)}{\|f\|_{\infty}},\frac{f(y)}{\|f\|_{\infty}}\right\}\,\dlat y\,\dlat x\\
 & \geq\int_{H}\int_{H^{\bot}}g(x)\frac{P_Hf(x)}{\|f\|_{\infty}}\frac{f(y)}{\|f\|_{\infty}}\,\dlat y\,\dlat x\\
 & =\int_{H}g(x)\frac{P_Hf(x)}{\|f\|_{\infty}}\,\dlat x\int_{H^{\bot}}\frac{f(y)}{\|f\|_{\infty}}\,\dlat y.
\end{split}
\end{equation}
On the other hand, Fubini's theorem yields
\begin{equation}\label{e:right}
\begin{split}
\int_0^1\int_{H}g(x)\int_{x+H^{\bot}}\chi_{_{\c_t(f)}}(y)\dlat y\,\dlat
    x\,\dlat t & =\int_{H}g(x)\int_{x+H^{\bot}}\int_0^1\chi_{_{\c_t(f)}}(y)\dlat t\,\dlat y\,\dlat x\\
 & =\int_{H}\int_{x+H^{\bot}}g(x)\frac{f(y)}{\|f\|_{\infty}}\,\dlat y\,\dlat x\\
 & =\int_{\R^n}g(P_Hz)\frac{f(z)}{\|f\|_{\infty}}\,\dlat z.
\end{split}
\end{equation}
Therefore, from \eqref{e:gf_ineq_para_phi_t_2}, \eqref{e:left} and
\eqref{e:right} we obtain
\begin{equation*}
\int_{H}g(x)P_Hf(x)\,\dlat x\int_{H^{\bot}}f(y)\,\dlat y
\leq\binom{n}{k}\|f\|_{\infty}\int_{\R^n}g(P_Hx)f(x)\,\dlat x.
\end{equation*}
This concludes the proof.
\end{proof}

With the above approach, but using \eqref{e:int(suppf_C_theta)_g_uno}
instead of \eqref{e:int(suppf_C_theta)_g_rad_dec}, we notice that the
maximality assumption at the origin can be relaxed to get the following
result, which has been recently obtained in the setting of a log-concave
integrable function in \cite[Theorem~1.1]{AAGJV}.
\begin{corollary}\label{c:RS_sect_proj}
Let $k\in\{1,\dots,n-1\}$ and $H\in\G(n,n-k)$. Let
$f:\R^n\longrightarrow[0,\infty)$ be a quasi-concave function such that
\[
\sup_{x\in H}\vol_k\bigl(\c_t(f)\cap\bigl(x+H^{\bot}\bigr)\bigr)
\]
is attained for all $t\in(0,1)$. Then
\begin{equation}\label{e:proy_sect_f}
\int_{H}P_Hf(x)\,\dlat x\max_{x_0\in H}\int_{x_0+H^{\bot}}f(y)\,\dlat y
\leq\binom{n}{k}\|f\|_{\infty}\int_{\R^n}f(x)\,\dlat x.
\end{equation}
\end{corollary}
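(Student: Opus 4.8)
The plan is to run the argument of Theorem~\ref{t:functional_RS} at the level of superlevel sets, but with the constant weight $g\equiv1$ in place of the radially decreasing one, so that \eqref{e:int(suppf_C_theta)_g_uno} applies. The point is that for $g\equiv1$ and Lebesgue measure, the inequality \eqref{e:int(suppf_C_theta)_g_uno} holds without any maximality-at-the-origin assumption: it is exactly Corollary~\ref{c:|f|_x0} with $\phi\equiv1$ (so that $\phi(x_0)/\|\phi\|_{\infty}=1$), which only requires the $p$-concave function involved to attain its maximum \emph{somewhere}. Fix $t\in(0,1)$ and set $\varphi_t(x)=\vol_k\bigl(\c_t(f)\cap(x+H^{\bot})\bigr)$ for $x\in H$. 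By the Brunn--Minkowski inequality \eqref{e:B-M_ineq} the function $\varphi_t$ is $(1/k)$-concave, its support is $P_H\c_t(f)$, and by hypothesis it attains its maximum at some point of $H$. Applying the Lebesgue-measure, $g\equiv1$ case of Proposition~\ref{l:integral ineqs} (in the form of Corollary~\ref{c:|f|_x0}) to $\varphi_t$, and recalling both that $\alpha^{n-k}_{1/k,0}=\binom{n}{k}^{-1}$ and that $\int_{P_H\c_t(f)}\varphi_t(x)\,\dlat x=\vol_n\bigl(\c_t(f)\bigr)$ by Fubini, one obtains the pointwise section--projection bound
\[
a(t)\,m(t)\leq\binom{n}{k}\,c(t),
\]
where $a(t)=\vol_{n-k}\bigl(P_H\c_t(f)\bigr)$, $m(t)=\max_{x\in H}\vol_k\bigl(\c_t(f)\cap(x+H^{\bot})\bigr)=\|\varphi_t\|_{\infty}$ and $c(t)=\vol_n\bigl(\c_t(f)\bigr)$.

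Next I would integrate in $t$. The difficulty, and the main obstacle of the argument, is that the left-hand side of \eqref{e:proy_sect_f} is a \emph{product} of two integrals, whereas integrating the displayed pointwise bound only controls $\int_0^1 a(t)m(t)\,\dlat t$; in general the product of the integrals need not be bounded by the integral of the product. These two quantities are reconciled by monotonicity: since $\c_{s}(f)\supset\c_{t}(f)$ whenever $s\leq t$, taking projections and sections shows that both $a$ and $m$ are non-increasing on $(0,1)$. Being similarly ordered, they satisfy Chebyshev's integral inequality, which gives
\[
\Bigl(\int_0^1 a(t)\,\dlat t\Bigr)\Bigl(\int_0^1 m(t)\,\dlat t\Bigr)\leq\int_0^1 a(t)\,m(t)\,\dlat t\leq\binom{n}{k}\int_0^1 c(t)\,\dlat t.
\]
This Chebyshev step is the crux and the only genuinely new ingredient compared with Theorem~\ref{t:functional_RS}, where the maximality-at-the-origin hypothesis kept the relevant section at a single fixed point and thus avoided the decoupling problem.

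Finally I would translate the three integrals back by the layer-cake formula (substituting $s=t\|f\|_{\infty}$). Since $\{P_Hf>t\|f\|_{\infty}\}\subset P_H\c_t(f)\subset\{P_Hf\geq t\|f\|_{\infty}\}$, the two outer sets differ by a null set for a.e.\ $t$, whence $\int_H P_Hf(x)\,\dlat x=\|f\|_{\infty}\int_0^1 a(t)\,\dlat t$; likewise $\int_{\R^n}f(x)\,\dlat x=\|f\|_{\infty}\int_0^1 c(t)\,\dlat t$. For the maximal section, for each fixed $x_0\in H$ one has $\int_{x_0+H^{\bot}}f(y)\,\dlat y=\|f\|_{\infty}\int_0^1\vol_k\bigl(\c_t(f)\cap(x_0+H^{\bot})\bigr)\,\dlat t\leq\|f\|_{\infty}\int_0^1 m(t)\,\dlat t$, and since this uniform bound is independent of $x_0$, taking the maximum preserves it: $\max_{x_0\in H}\int_{x_0+H^{\bot}}f(y)\,\dlat y\leq\|f\|_{\infty}\int_0^1 m(t)\,\dlat t$. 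Multiplying the bounds for the two factors and inserting the Chebyshev inequality then yields
\[
\int_H P_Hf(x)\,\dlat x\;\max_{x_0\in H}\int_{x_0+H^{\bot}}f(y)\,\dlat y\leq\|f\|_{\infty}^2\binom{n}{k}\int_0^1 c(t)\,\dlat t=\binom{n}{k}\|f\|_{\infty}\int_{\R^n}f(x)\,\dlat x,
\]
which is precisely \eqref{e:proy_sect_f}.
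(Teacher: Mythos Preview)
Your proof is correct, and it follows the paper's approach in its first half: apply the $g\equiv 1$ case of Proposition~\ref{l:integral ineqs} (equivalently, Corollary~\ref{c:|f|_x0} with Lebesgue measure) to the $(1/k)$-concave section function $\varphi_t$ to obtain $a(t)m(t)\le\binom{n}{k}c(t)$ for each $t$. This is exactly what the paper means by ``using \eqref{e:int(suppf_C_theta)_g_uno} instead of \eqref{e:int(suppf_C_theta)_g_rad_dec}''.

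Where you diverge from the paper is in the decoupling step. The paper reuses the computation \eqref{e:left} from Theorem~\ref{t:functional_RS}: fix an arbitrary $x_0\in H$, bound $\vol_k\bigl(\c_t(f)\cap(x_0+H^{\bot})\bigr)\le m(t)$, and then unfold $\int_0^1 a(t)\,\vol_k\bigl(\c_t(f)\cap(x_0+H^{\bot})\bigr)\,\dlat t$ via Fubini and the pointwise inequality $\min(\alpha,\beta)\ge\alpha\beta$ for $\alpha,\beta\in[0,1]$ to obtain the product $\bigl(\int_H P_Hf\bigr)\bigl(\int_{x_0+H^{\bot}}f\bigr)/\|f\|_{\infty}^2$ directly; the maximum over $x_0$ is taken last. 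You instead observe that $a$ and $m$ are both non-increasing in $t$ and invoke Chebyshev's integral inequality to get $\bigl(\int_0^1 a\bigr)\bigl(\int_0^1 m\bigr)\le\int_0^1 am$, then bound the maximal section by $\|f\|_{\infty}\int_0^1 m$. Both routes are elementary and of comparable length; the Chebyshev argument is a clean alternative that makes the monotonicity of the superlevel-set data do the work, while the paper's $\min\ge$ product trick avoids appealing to an auxiliary inequality and stays entirely within the Fubini framework already set up in the proof of Theorem~\ref{t:functional_RS}.
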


We point out that, in the case of an integrable function $f$ whose
restriction to its support is continuous, the above assumption on the
volume of the sections of $\c_t(f)$ trivially holds, since $\c_t(f)$ is
compact for every $t\in(0,1)$. Notice also that, when dealing with certain
classes of functions with a more restrictive concavity (such as
log-concave ones), continuity on the interior of their support is already
guaranteed.

\section{Rogers-Shephard type inequalities for measures with quasi-concave densities}\label{s:quasi_concave}

As a direct application of Corollary \ref{c:RS_sect_proj} we obtain the
following result.
\begin{theorem}\label{t:RS_seccion_proy_quasi}
Let $k\in\{1,\dots,n-1\}$ and $H\in\G(n,n-k)$. Let
$\phi_i:\R^i\longrightarrow[0,\infty)$, $i=n-k,k$, be functions with
$\|\phi_i\|_{\infty}=\phi_i(0)$, and such that the function
$\phi:\R^n\longrightarrow[0,\infty)$ given by
$\phi(x,y)=\phi_{n-k}(x)\phi_k(y)$, $x\in\R^{n-k}$, $y\in\R^k$, is
quasi-concave. Let $\mu_n=\mu_{n-k}\times\mu_{k}$ be the product measure
on $\R^n$ given by $\dlat\mu_{n-k}(x)=\phi_{n-k}(x)\,\dlat x$ and
$\dlat\mu_{k}(y)=\phi_k(y)\,\dlat y$. Let $K\in\K^n$ with $P_HK\subset K$
and so that $\vol\bigl(\c_t(\phi)\cap K\cap(x+H^{\bot})\bigr)$ attains its
maximum for all $t\in(0,1)$. Then
\begin{equation}\label{e:RS_seccion_proy_quasi}
\mu_{n-k}\bigl(P_HK\bigr)\max_{x_0\in
H}\left[\frac{\phi_{n-k}(x_0)}{\|\phi_{n-k}\|_{\infty}}\mu_k\bigl(K\cap(x_0+H^{\bot})\bigr)\right]\leq\binom{n}{k}\mu_n(K).
\end{equation}
\end{theorem}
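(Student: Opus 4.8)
The plan is to apply Corollary \ref{c:RS_sect_proj} to the function $f=\chi_{_K}\,\phi$, writing points of $\R^n$ as $(x,y)$ with $x\in H$ and $y\in H^{\bot}$, so that $\phi(x,y)=\phi_{n-k}(x)\phi_k(y)$. First I would check that $f$ meets the hypotheses of that corollary. For $s>0$ we have $S_{\geq s}(f)=K\cap S_{\geq s}(\phi)$, which is convex because $K$ is convex and $\phi$ is quasi-concave; hence $f$ is quasi-concave, and it is bounded since $f\leq\|\phi\|_{\infty}\chi_{_K}$. Moreover, for each $t\in(0,1)$ one has $\c_t(f)=K\cap\c_{s}(\phi)$ with $s=t\|f\|_{\infty}/\|\phi\|_{\infty}\in(0,1)$, so that $\c_t(f)\cap(x+H^{\bot})=\c_s(\phi)\cap K\cap(x+H^{\bot})$. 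Thus the standing assumption that $\vol_k\bigl(\c_s(\phi)\cap K\cap(x+H^{\bot})\bigr)$ attains its maximum over $x\in H$ for every $s\in(0,1)$ is exactly the maximal-section hypothesis of Corollary \ref{c:RS_sect_proj}. This is why the corollary, rather than Theorem \ref{t:functional_RS}, is the appropriate tool: the maximal section of $f$ need not be located at the origin.

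Next I would evaluate the three integrals in \eqref{e:proy_sect_f}. For the projection term, $P_Hf(x)=0$ when $x\notin P_HK$, while for $x\in P_HK$ the hypothesis $P_HK\subset K$ gives $(x,0)\in K$; since $\|\phi_k\|_{\infty}=\phi_k(0)$, this forces $\sup_{y:(x,y)\in K}\phi_k(y)=\|\phi_k\|_{\infty}$, whence $P_Hf(x)=\phi_{n-k}(x)\|\phi_k\|_{\infty}$ on $P_HK$ and
\[
\int_{H}P_Hf(x)\,\dlat x=\|\phi_k\|_{\infty}\int_{P_HK}\phi_{n-k}(x)\,\dlat x=\|\phi_k\|_{\infty}\,\mu_{n-k}(P_HK).
\]
For the section term, $\int_{x_0+H^{\bot}}f=\phi_{n-k}(x_0)\,\mu_k\bigl(K\cap(x_0+H^{\bot})\bigr)$, so that
\[
\max_{x_0\in H}\int_{x_0+H^{\bot}}f=\|\phi_{n-k}\|_{\infty}\max_{x_0\in H}\frac{\phi_{n-k}(x_0)}{\|\phi_{n-k}\|_{\infty}}\,\mu_k\bigl(K\cap(x_0+H^{\bot})\bigr),
\]
while $\int_{\R^n}f=\mu_n(K)$.

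Finally I would feed these into Corollary \ref{c:RS_sect_proj} and divide both sides by $\|\phi_{n-k}\|_{\infty}\|\phi_k\|_{\infty}=\|\phi\|_{\infty}$, obtaining
\[
\mu_{n-k}(P_HK)\max_{x_0\in H}\frac{\phi_{n-k}(x_0)}{\|\phi_{n-k}\|_{\infty}}\,\mu_k\bigl(K\cap(x_0+H^{\bot})\bigr)\leq\binom{n}{k}\frac{\|f\|_{\infty}}{\|\phi\|_{\infty}}\,\mu_n(K).
\]
Since $f=\chi_{_K}\phi\leq\phi$ yields $\|f\|_{\infty}\leq\|\phi\|_{\infty}$, the factor $\|f\|_{\infty}/\|\phi\|_{\infty}$ is at most $1$ and \eqref{e:RS_seccion_proy_quasi} follows (indeed with the slightly sharper constant $\binom{n}{k}\|f\|_{\infty}/\|\phi\|_{\infty}$). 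The step I expect to demand the most care is the evaluation of $\int_H P_Hf$: it is precisely there that both structural hypotheses, $P_HK\subset K$ and $\|\phi_k\|_{\infty}=\phi_k(0)$, are used together to collapse $\sup_{y:(x,y)\in K}\phi_k(y)$ to the constant $\|\phi_k\|_{\infty}$, so I would verify measurability of $P_Hf$ and confirm that the resulting identity for the projection integral is exact.
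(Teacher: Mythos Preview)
Your proof is correct and follows exactly the paper's approach: apply Corollary~\ref{c:RS_sect_proj} to $f=\chi_{_K}\phi$, using $P_HK\subset K$ and $\|\phi_k\|_{\infty}=\phi_k(0)$ to compute $P_Hf$. Your treatment is in fact slightly more careful than the paper's, which asserts $\|f\|_{\infty}=\phi_{n-k}(0)\phi_k(0)$ without noting that this requires $0\in K$; your observation that $\|f\|_{\infty}\leq\|\phi\|_{\infty}$ suffices (and yields a marginally sharper constant) is the right way to handle it.
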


\begin{proof}
It is a straightforward consequence of \eqref{e:proy_sect_f} applied to
the function $f:\R^n\longrightarrow[0,\infty)$ given by
$f(x,y)=\phi_{n-k}(x)\phi_k(y)\chi_{_K}(x,y)$. Indeed, since $P_HK\subset
K$ then
\[
P_Hf(x)=\sup_{y\in
H^{\bot}}\phi_{n-k}(x)\phi_k(y)\chi_{_K}(x,y)=\phi_{n-k}(x)\phi_{k}(0)\chi_{_{P_HK}}(x)
\]
and $\|f\|_{\infty}=\phi_{n-k}(0)\phi_{k}(0)$.
\end{proof}

We point out that the assumption $P_HK\subset K$ is needed in order to
conclude the above Rogers-Shephard type inequality (as well as Theorem
\ref{t:RS_secc_proy_K(0)}):

\begin{example}\label{r:hip_P_HK}
Let $\mu_1$ be the measure on $\R$ given by $\dlat\mu_1(x)=e^{-x^2}\,\dlat
x$ and let $\mu_2=\mu_{1}\times\mu_{1}$, i.e.,
$\dlat\mu_2(x)=e^{-|x|^2}\,\dlat x$. Let
$H=\bigl\{(x,y)\in\R^2:y=0\bigr\}$ and, for a given $0<\alpha<\pi/2$, let
$K_{\alpha}$ be the centrally symmetric parallelogram
$K_{\alpha}=\conv\bigl\{(1,\tan\alpha\pm 1),(-1,-\tan\alpha\pm 1)\bigr\}$.

On the one hand, $K_{\alpha}(0)=\bigl[(0,1),(0,-1)\bigr]$ is the `maximal'
section of $K_{\alpha}$ (with respect to $\mu_1$) and
$P_HK_{\alpha}=\bigl[(-1,0),(1,0)\bigr]$. On the other hand, since
$K_{\alpha}$ is contained in the infinite strip $S_{\alpha}$ determined by
the straight lines $y=(\tan\alpha) x\pm 1$, and $\mu_2$ is rotationally
invariant, we have that
\[
\mu_2(K_{\alpha})\leq\mu_2(S_{\alpha})=\sqrt{2\pi}\,\mu_1(I_{\alpha}),
\]
where $I_{\alpha}$ denotes the line segment centered at the origin and
with length the width of $S_{\alpha}$.

Hence, $\mu_1(I_{\alpha})$, and so $\mu_2(K_{\alpha})$, can be made
arbitrarily small when $\alpha\rightarrow\pi/2$. However, the term
$\mu_1\bigl(P_HK_{\alpha}\bigr)\mu_1\bigl(K_{\alpha}(0)\bigr)=\mu_1\bigl([(-1,0),(1,0)]\bigr)^2$
is a fixed positive constant. This shows the necessity of assuming
$P_HK\subset K$ in order to derive both \eqref{e:RS_seccion_proy_quasi}
and \eqref{e:RS_secc_proy_K(0)}.
\end{example}

In order to avoid the assumption $P_HK\subset K$, one may exchange the
orthogonal projection by the corresponding maximal section. To this end,
first we fix some notation: given a measure $\mu$ in $\R^n$ with density
$\phi$, we will denote by $\mu_i$, $i=1,\dots,n-1$, the {\it marginal} of
$\mu$ in the corresponding $i$-dimensional affine subspace, i.e., for
given $M\subset z+H$ with $H\in\G(n,i)$ and $z\in H^{\bot}$,
\[
\mu_i(M)=\int_H\chi_{_{M}}(x,z)\phi(x,z)\,\dlat x.
\]
Taking the function $f:\R^n\longrightarrow[0,\infty)$ given by
$f(x,y)=\phi(x,y)\chi_{_K}(x,y)$, $x\in H$, $y\in H^{\bot}$, since
\[
P_Hf(x)=\sup_{y\in
H^{\bot}}\phi(x,y)\chi_{_K}(x,y)\geq\phi(x,y)\chi_{_K}(x,y)=f(x,y),
\]
we get the following result, as direct consequence of
\eqref{e:proy_sect_f}.
\begin{corollary}\label{c:coro_RS_sect_sect}
Let $k\in\{1,\dots,n-1\}$ and $H\in\G(n,n-k)$. Let $\mu$ be a measure on
$\R^n$ given by $\dlat\mu(x)=\phi(x)\,\dlat x$, where
$\phi:\R^n\longrightarrow[0,\infty)$ is a quasi-concave function with
$\|\phi\|_{\infty}=\phi(0)$. Let $K\in\K^n$ be such that there exists the
maximum of $\vol\bigl(\c_t(\phi)\cap K\cap(x+H^{\bot})\bigr)$ for all
$t\in(0,1)$. Then
\begin{equation}\label{e:coro_RS_sect_sect}
\max_{y\in H}\mu_{n-k}\bigl(K\cap (y+H)\bigr)\max_{x_0\in
H}\mu_k\bigl(K\cap(x_0+H^{\bot})\bigr)\leq\binom{n}{k}\|\phi\|_{\infty}\mu(K).
\end{equation}
\end{corollary}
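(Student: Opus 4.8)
The plan is to deduce the statement directly from the functional inequality \eqref{e:proy_sect_f} in Corollary \ref{c:RS_sect_proj}, applied to the function $f:\R^n\longrightarrow[0,\infty)$ given by $f(x,y)=\phi(x,y)\chi_{_K}(x,y)$, where I write points of $\R^n$ as $(x,y)$ with $x\in H$ and $y\in H^{\bot}$. First I would verify that $f$ satisfies the hypotheses of that corollary. Since $\phi$ is bounded (as $\|\phi\|_{\infty}=\phi(0)$) and $K$ is compact, $f$ is bounded with compact support, hence integrable. For $t\in(0,1)$ the superlevel set $\c_t(f)$ equals $K\cap\c_s(\phi)$ with $s=t\|f\|_{\infty}/\|\phi\|_{\infty}$, an intersection of two convex sets, so $f$ is quasi-concave; moreover the requirement that $\sup_{x\in H}\vol_k\bigl(\c_t(f)\cap(x+H^{\bot})\bigr)$ be attained is exactly the assumption made on $K$, because $\c_t(f)\cap(x+H^{\bot})=\c_s(\phi)\cap K\cap(x+H^{\bot})$.

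Next I would identify the three quantities appearing in \eqref{e:proy_sect_f}. On the right-hand side, $\int_{\R^n}f(x)\,\dlat x=\mu(K)$ while $\|f\|_{\infty}\leq\|\phi\|_{\infty}$, so that side is at most $\binom{n}{k}\|\phi\|_{\infty}\mu(K)$. For the section factor, the definition of the marginal $\mu_k$ gives, for each $x_0\in H$,
\[
\int_{x_0+H^{\bot}}f(y)\,\dlat y=\mu_k\bigl(K\cap(x_0+H^{\bot})\bigr),
\]
whence $\max_{x_0\in H}\int_{x_0+H^{\bot}}f$ is precisely $\max_{x_0\in H}\mu_k\bigl(K\cap(x_0+H^{\bot})\bigr)$.

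The only genuinely substantive step is the treatment of the projection factor $\int_{H}P_Hf(x)\,\dlat x$, and this is where the section-replacement idea enters. For every fixed $y_0\in H^{\bot}$ one has $P_Hf(x)=\sup_{y\in H^{\bot}}\phi(x,y)\chi_{_K}(x,y)\geq\phi(x,y_0)\chi_{_K}(x,y_0)=f(x,y_0)$, so integrating over $x\in H$ and recognizing the marginal $\mu_{n-k}$ yields
\[
\int_{H}P_Hf(x)\,\dlat x\geq\int_{H}f(x,y_0)\,\dlat x=\mu_{n-k}\bigl(K\cap(y_0+H)\bigr).
\]
Maximizing over the translates $y_0$ replaces the orthogonal projection appearing in the functional inequality by a genuine maximal section, which is precisely what allows one to dispense with the hypothesis $P_HK\subset K$ required in Theorem \ref{t:RS_seccion_proy_quasi}.

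Finally I would assemble the three estimates. Since the section factor is nonnegative, the lower bound on $\int_{H}P_Hf$ combines with \eqref{e:proy_sect_f} and the bound $\|f\|_{\infty}\leq\|\phi\|_{\infty}$ to yield \eqref{e:coro_RS_sect_sect}. I do not anticipate a deep obstacle here: the work lies entirely in checking that quasi-concavity and the section-maximum condition transfer to $f$, and in tracking that the two lossy one-sided estimates point in the favorable direction so that they chain correctly.
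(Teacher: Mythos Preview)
Your proposal is correct and follows exactly the paper's approach: the corollary is stated there as a direct consequence of \eqref{e:proy_sect_f} applied to $f(x,y)=\phi(x,y)\chi_{_K}(x,y)$, using the pointwise bound $P_Hf(x)\geq f(x,y_0)$ for every $y_0\in H^{\bot}$ to replace the projection factor by a maximal section. Your write-up simply makes explicit the hypothesis checks and the identification $\c_t(f)=K\cap\c_s(\phi)$ that the paper leaves implicit.
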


We notice that, from \eqref{e:RS_seccion_proy_quasi},
\begin{equation}\label{e:RS_seccion_proy_quasi:2}
\mu_{n-k}\bigl(P_HK\bigr)\mu_k\bigl(K\cap
H^{\bot}\bigr)\leq\binom{n}{k}\mu_n(K)
\end{equation}
holds provided that the density of $\mu_n$,
$\phi(x,y)=\phi_{n-k}(x)\phi_k(y)$, is quasi-concave. Although the latter
implies that both $\phi_{n-k}, \phi_k$ are quasi-concave, the converse is,
in general, not true. In the following we exploit the approach followed in
the previous section in order to derive \eqref{e:RS_seccion_proy_quasi:2}
for the more general case of measures $\mu_{n-k}, \mu_{k}$, with radially
decreasing and quasi-concave densities, respectively, and their product
$\mu_n=\mu_{n-k}\times\mu_{k}$, provided that the maximality assumption
\[
\max_{x\in P_HK}\vol_k\bigl(\c_t(\phi_k)\cap K(x)\bigr)
=\vol_k\bigl(\c_t(\phi_k)\cap K(0)\bigr)
\]
holds. Again, we need to assume the condition $P_HK\subset K$.

\begin{proof}[Proof of Theorem~\ref{t:RS_secc_proy_K(0)}]
By an appropriate choice of the coordinate axes, we may assume that
$H=\{x_{n-k+1}=\dots=x_n=0\}$. For every $t\in[0,1]$, and $x\in P_H K$, we
consider the set
\[
\c_{x,t}=\Bigl(\{0\}\times\c_t(\phi_k)\Bigr)\cap K(x)
\]
and the function $\varphi_t:P_HK\longrightarrow[0,\infty)$ given by
\[
\varphi_t(x)=\vol_k\bigl(\c_{x,t}\bigr).
\]
Since $P_HK\subset K$ and $\phi_k$ is continuous at the origin (which
implies that $0\in\inter\c_t(\phi_k)$ for all $t<1$), we may assure that,
for every $t<1$, $\varphi_t(x)>0$ for any $x$ in the (relative) interior
of $P_HK$ and hence $\supp\varphi_t=P_HK$. Moreover, $\varphi_t$ is
$(1/k)$-concave by \eqref{e:B-M_ineq} and, by hypothesis, we have
$\|\varphi_t\|_{\infty}=\varphi_t(0)$.

Then, applying \eqref{e:int(suppf_C_theta)_g_rad_dec}, with $p=1/k$, to
the function $g:P_HK\longrightarrow[0,\infty)$ given by
$g(x,0)=\phi_{n-k}(x)$, $x\in\R^{n-k}$, we get
\begin{equation}\label{e:secc1}
\int_{P_HK}\phi_{n-k}(x)\,\dlat x
\leq\binom{n}{k}\dfrac{1}{\|\varphi_t\|_{\infty}}\int_{P_HK}\phi_{n-k}(x)\,\varphi_t(x)\,\dlat
x,
\end{equation}
and hence, integrating (\ref{e:secc1}) over $t\in[0,1]$, we obtain
\begin{equation*}\label{e:gf_ineq_para_phi_t_proy_secc_2}
\int_0^1\int_{P_HK}\phi_{n-k}(x)\,\dlat x\int_{\R^k}
\chi_{_{\c_{0,t}}}(y)\,\dlat y\,\dlat
t\leq\binom{n}{k}\int_0^1\int_{P_HK}\phi_{n-k}(x)\int_{\R^k}\chi_{_{\c_{x,t}}}(y)
\,\dlat y\,\dlat x\,\dlat t.
\end{equation*}
Therefore, by Fubini's theorem we have
\begin{equation*}\label{e:left_proy_secc}
\begin{split}
\mu_{n-k}\bigl(P_HK\bigr)\mu_k\bigl(K\cap H^{\bot}\bigr) &
    =\|\phi_k\|_{\infty}\int_{P_HK}\phi_{n-k}(x)\,\dlat x\int_{K(0)}\int_0^1 \chi_{_{\c_t(\phi_k)}}(y)\,\dlat t\,\dlat y\\
 & =\|\phi_k\|_{\infty}\int_0^1\int_{P_HK}\phi_{n-k}(x)\,\dlat x\int_{\R^k} \chi_{_{\c_{0,t}}}(y)\,\dlat y\,\dlat t\\
 & \leq\binom{n}{k}\|\phi_k\|_{\infty}\int_0^1\int_{P_HK}\phi_{n-k}(x)\int_{\R^k}\chi_{_{\c_{x,t}}}(y)\,\dlat y\,\dlat x\,\dlat t\\
 & =\binom{n}{k}\|\phi_k\|_{\infty}\int_{P_HK}\phi_{n-k}(x)\int_{K(x)}\int_0^1\chi_{_{\c_t(\phi_k)}}(y)\,\dlat t\,\dlat y\,\dlat x\\
 & =\binom{n}{k}\int_{P_HK}\phi_{n-k}(x)\mu_k\bigl(K(x)\bigr)\dlat x=\binom{n}{k}\mu_n(K).
\end{split}
\end{equation*}
This concludes the proof.
\end{proof}

Next we show an extension of the above Rogers-Shephard type inequalities
involving maximal sections of convex bodies (cf.
\eqref{e:coro_RS_sect_sect}) in the spirit of \cite[Lemma~4.1]{AAGJV}.
\begin{corollary}\label{c:RS_E_H}
Let $i,j\in\{2,\dots,n-1\}$, $i+j\geq n+1$, and let $E\in\G(n,i)$,
$H\in\G(n,j)$ be such that $E^{\bot}\subset H$. Let
$\phi:\R^n\longrightarrow[0,\infty)$ be a $(-1/n)$-concave function and
let $\mu$ be the measure on $\R^n$ given by $\dlat\mu(x)=\phi(x)\,\dlat
x$. Then, for every  $K\in\K^n$,
\begin{equation}\label{e:RS_E_H}
\sup_{x\in E^{\bot}}\mu_i\bigl(K\cap (x+E)\bigr)\sup_{y\in
H^{\bot}}\mu_j\bigl(K\cap (y+H)\bigr)
\leq\binom{n-k}{n-i}\sup_{x\in\R^n}\mu_k\bigl(K\cap (x+F)\bigr)\mu(K),
\end{equation}
where $F=E\cap H$.
\end{corollary}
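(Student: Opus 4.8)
The plan is to integrate out the directions along $F=E\cap H$ so as to reduce \eqref{e:RS_E_H} to the functional section--projection inequality \eqref{e:proy_sect_f} of Corollary~\ref{c:RS_sect_proj} on a space of dimension $n-k$. First I would fix the geometry. From $E^{\bot}\subset H$ one gets $H^{\bot}\subset E$ and $E^{\bot}\perp H^{\bot}$, so, writing $k=\dim F=i+j-n$, there is an orthogonal decomposition $\R^n=F\oplus E^{\bot}\oplus H^{\bot}$ with $\dim F=k$, $\dim E^{\bot}=n-i$, $\dim H^{\bot}=n-j$ and $(n-i)+(n-j)=n-k$. Writing points as $(a,b,c)$ with $a\in F$, $b\in E^{\bot}$, $c\in H^{\bot}$, an affine copy of $E$ is obtained by fixing $b$, an affine copy of $H$ by fixing $c$, and an affine copy of $F$ by fixing $(b,c)$. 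Put $\psi=\phi\,\chi_{_K}$; this is again $(-1/n)$-concave, since its support $K\cap\supp\phi$ is convex and there $\psi^{-1/n}=\phi^{-1/n}$ is convex. With this notation the three quantities in \eqref{e:RS_E_H} are integrals of $\psi$ over the corresponding sections.

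Next I would pass to the $F$-marginal. Define $g\colon E^{\bot}\times H^{\bot}\to[0,\infty)$ by
\begin{equation*}
g(b,c)=\int_F\psi(a,b,c)\,\dlat a=\mu_k\bigl(K\cap((b,c)+F)\bigr).
\end{equation*}
Integrating out the $k$ variables of $F$ from the $(-1/n)$-concave function $\psi$, the Borell--Brascamp--Lieb theorem \cite{Borell,BL} shows $g$ is $\bigl(-1/(n-k)\bigr)$-concave on $\R^{n-k}=E^{\bot}\oplus H^{\bot}$, the exponent being $\tfrac{-1/n}{1+k(-1/n)}=-\tfrac{1}{\,n-k\,}$; in particular $g$ is quasi-concave and (assuming, as we may, $\mu(K)<\infty$) integrable, with $\int_{\R^{n-k}}g=\mu(K)$. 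By Fubini the suprema in \eqref{e:RS_E_H} become
\begin{equation*}
\sup_{x\in E^{\bot}}\mu_i\bigl(K\cap(x+E)\bigr)=\sup_{b}\int_{H^{\bot}}g(b,c)\,\dlat c,\qquad
\sup_{y\in H^{\bot}}\mu_j\bigl(K\cap(y+H)\bigr)=\sup_{c}\int_{E^{\bot}}g(b,c)\,\dlat b,
\end{equation*}
and $\sup_{x\in\R^n}\mu_k\bigl(K\cap(x+F)\bigr)=\|g\|_{\infty}$.

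Finally I would apply Corollary~\ref{c:RS_sect_proj} to $g$ on $\R^{n-k}$, taking the $(n-i)$-dimensional subspace $E^{\bot}$ in the role of $H$ there, so that the complementary direction is $H^{\bot}$ and the constant is $\binom{n-k}{n-j}=\binom{n-k}{n-i}$. Because $g$ is integrable and quasi-concave, each superlevel set $\c_t(g)$ is a convex set of finite measure, hence bounded, so the sections $\vol_{n-j}\bigl(\c_t(g)\cap(b+H^{\bot})\bigr)$ attain their maximum in $b$ and \eqref{e:proy_sect_f} applies, giving
\begin{equation*}
\int_{E^{\bot}}P_{E^{\bot}}g(b)\,\dlat b\ \sup_{b}\int_{H^{\bot}}g(b,c)\,\dlat c
\leq\binom{n-k}{n-i}\,\|g\|_{\infty}\int_{\R^{n-k}}g.
\end{equation*}
The remaining point is that \eqref{e:proy_sect_f} produces the projection function $P_{E^{\bot}}g(b)=\sup_{c}g(b,c)$ rather than the genuine marginal we need; but $P_{E^{\bot}}g(b)\ge g(b,c_0)$ for every fixed $c_0$, so integrating in $b$ and taking the supremum over $c_0$ yields $\int_{E^{\bot}}P_{E^{\bot}}g\,\dlat b\ge\sup_{c}\int_{E^{\bot}}g(b,c)\,\dlat b$. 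Substituting this lower bound and reading the result back through the three identities above gives exactly \eqref{e:RS_E_H}.

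I expect the main obstacle to be the bookkeeping of the reduction rather than any single hard estimate: one must check that the $F$-marginal lands precisely on the borderline exponent $-1/(n-k)$ (which is what keeps $g$ quasi-concave and delivers the right binomial), and that the regularity hypothesis on the sections of $g$ in Corollary~\ref{c:RS_sect_proj} is met. The only genuinely non-formal step is reconciling the projection function forced by \eqref{e:proy_sect_f} with the marginal demanded by the left-hand side of \eqref{e:RS_E_H}, which luckily is handled by the trivial pointwise bound $P_{E^{\bot}}g\ge g(\cdot,c_0)$ pointing in the favorable direction.
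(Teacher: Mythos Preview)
Your proposal is correct and follows essentially the same route as the paper: define the $F$-marginal $g=f$ of $\phi\chi_{_K}$ on $F^{\bot}=E^{\bot}\oplus H^{\bot}$, invoke Borell--Brascamp--Lieb to get quasi-concavity, apply Corollary~\ref{c:RS_sect_proj} on $\R^{n-k}$ with $E^{\bot}$ playing the role of the projection subspace, and then replace the projection function $P_{E^{\bot}}g$ by the smaller quantity $g(\cdot,c_0)$ for each fixed $c_0\in H^{\bot}$ before taking the supremum. Your write-up is in fact slightly more explicit than the paper's about the exponent bookkeeping and the verification that the section-maximum hypothesis of Corollary~\ref{c:RS_sect_proj} is met.
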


\begin{proof}
Let $f:F^{\bot}\longrightarrow[0,\infty)$ be the function given by
\[
f(x,y)=\int_{\R^k}\phi(x,y,z)\chi_{_{K}}(x,y,z) \,\dlat z.
\]
The Borell-Brascamp-Lieb inequality (see e.g. \cite[Theorem 10.1]{G})
implies that $f$ is quasi-concave and, in particular, $\c_t(f)$ is a convex
body. Then, we may apply Corollary \ref{c:RS_sect_proj} to obtain
\begin{equation*}
\begin{split}
& \int_{E^{\bot}}\sup_{y\in H^{\bot}}\int_{\R^k}
    \phi(x,y,z) \chi_{_{K}}(x,y,z) \,\dlat z\,\dlat x
    \sup_{x\in E^{\bot}}\int_{H^{\bot}}\int_{\R^k}\phi(x,y,z)\chi_{_{K}}(x,y,z) \,\dlat z\,\dlat y\\
 & \leq \binom{n-k}{n-i}\sup_{(x,y)\in F^{\bot}}\int_{\R^k}\phi(x,y,z)\chi_{_{K}}(x,y,z) \,\dlat z
    \int_{F^\bot}\int_{\R^k}\phi(x,y,z)\chi_{_{K}}(x,y,z) \,\dlat z\,\dlat x\,\dlat y
\end{split}
\end{equation*}
and thus, in particular, for every $y_0\in H^{\bot}$ we have
\begin{equation*}
\begin{split}
\int_{E^{\bot}}\int_{\R^k}\phi(x,y_0,z)\chi_{_{K}}(x,y_0,&z)
    \,\dlat z\,\dlat x
    \sup_{x\in E^{\bot}}\int_{H^{\bot}}\int_{\R^k}\phi(x,y,z)\chi_{_{K}}(x,y,z) \,\dlat z\,\dlat y\\
 & \leq \binom{n-k}{n-i}\sup_{(x,y)\in F^{\bot}}\mu_k\Bigl(K\cap\bigl((x,y)+F\bigr)\Bigr)\mu(K).
\end{split}
\end{equation*}
Hence, for every $y_0\in H^{\bot}$, we get
\begin{equation*}
\mu_j\bigl(K\cap (y_0+ H)\bigr)\sup_{x\in E^{\bot}}\mu_i\bigl(K\cap
(x+E)\bigr)\leq \binom{n-k}{n-i}\sup_{x\in\R^n}\mu_k\bigl(K\cap
(x+F)\bigr)\mu(K),
\end{equation*}
which  implies \eqref{e:RS_E_H}.
\end{proof}

Next we show how one may exploit the approach we are following in this
section to obtain an analogous result to Proposition
\ref{t:RS_omega_rad_decreasing}, in the setting of quasi-concave densities
which are not necessarily continuous. Notice that whereas the right-hand
side in \eqref{e:RS_omega_quasiconcave} is smaller than the right-hand
side in \eqref{e:RS_omega_rad_decreasing}, the constants $c(\omega)$ and
$\phi(\omega)/\|\phi\|_{\infty}$ are not comparable in general.

\begin{theorem}\label{t:RS_omega_quasiconcave}
Let $K\in\K^n$ and let $\mu$ be a measure on $\R^n$ given by
$\dlat\mu(x)=\phi(x)\,\dlat x$, where $\phi:\R^n\longrightarrow[0,\infty)$
is a bounded quasi-concave function. Then, for every $\omega\in\R^n$,
\begin{equation}\label{e:RS_omega_quasiconcave}
\frac{\phi(\omega)}{\|\phi\|_{\infty}}\mu(K-K+\omega)
\leq\binom{2n}{n}\min\left\{\sup_{y\in K}\mu(y+\omega-K),
    \sup_{y\in K}\mu(-y+\omega+K)\right\}.
\end{equation}
Moreover, if $\phi$ is continuous at $\omega_0$, for some
$\omega_0\in\R^n$, then equality holds in \eqref{e:RS_omega_quasiconcave}
(for such $\omega_0$) if and only if $\mu$ is a constant multiple of the
Lebesgue measure on $K-K+\omega_0$, $\phi(\omega_0)=\|\phi\|_{\infty}$ and
$K$ is a simplex.
\end{theorem}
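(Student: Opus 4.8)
The plan is to recognise \eqref{e:RS_omega_quasiconcave} as a direct application of Corollary~\ref{c:|f|_x0} with $p=1/n$, the point being that the constant $\alpha^n_{1/n,0}$ equals $\binom{2n}{n}^{-1}$. First I would introduce the function $f:\R^n\longrightarrow[0,\infty)$ given by
\[
f(x)=\vol\bigl(K\cap(x-\omega+K)\bigr),
\]
exactly as in the proof of Proposition~\ref{t:RS_omega_rad_decreasing}; by \eqref{e:B-M_ineq} and \eqref{eq:inclus} it is $(1/n)$-concave, its support is $\supp f=K-K+\omega$, and its maximum is attained at $x=\omega$ with $\|f\|_{\infty}=f(\omega)=\vol(K)$. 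Applying \eqref{e:mu(suppf)_x0} to $f$ with $x_0=\omega$ then yields
\[
\binom{2n}{n}^{-1}\frac{\phi(\omega)}{\|\phi\|_{\infty}}\,\mu(K-K+\omega)
\leq\frac{1}{\vol(K)}\int_{K-K+\omega}f(x)\,\dlat\mu(x).
\]

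Next I would evaluate the right-hand side. The Fubini computation is already recorded in \eqref{e:proving_RS_fubini_2}, which gives
\[
\int_{K-K+\omega}f(x)\,\dlat\mu(x)=\int_K\mu(y+\omega-K)\,\dlat y
\leq\vol(K)\,\sup_{y\in K}\mu(y+\omega-K),
\]
so that $\tfrac{\phi(\omega)}{\|\phi\|_{\infty}}\mu(K-K+\omega)\leq\binom{2n}{n}\sup_{y\in K}\mu(y+\omega-K)$. To obtain the second term in the minimum I would simply repeat the argument with $K$ replaced by $-K$: the set $K-K+\omega$ and the value $\|\phi\|_{\infty}$ are unchanged, while after the substitution $y\mapsto-y$ the bound becomes $\binom{2n}{n}\sup_{y\in K}\mu(-y+\omega+K)$. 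Taking the minimum of the two bounds proves \eqref{e:RS_omega_quasiconcave}.

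For the equality case at a point $\omega_0$ where $\phi$ is continuous, I would note that $\supp f=K-K+\omega_0$ is bounded, so the equality clause of Corollary~\ref{c:|f|_x0} applies. Assuming, without loss of generality by the $K\mapsto-K$ symmetry, that the minimum is attained by the first term, equality in \eqref{e:RS_omega_quasiconcave} forces equality both in \eqref{e:mu(suppf)_x0} and in the passage to the supremum; the former yields that $\mu$ is a constant multiple of the Lebesgue measure on $K-K+\omega_0$ and that $\phi(\omega_0)=\|\phi\|_{\infty}$. The main point to observe is that once $\mu$ equals $\|\phi\|_{\infty}$ times Lebesgue measure on $K-K+\omega_0$, every translate $y+\omega_0-K$ with $y\in K$ lies inside $K-K+\omega_0$ and thus has $\mu$-measure $\|\phi\|_{\infty}\vol(K)$, so \eqref{e:RS_omega_quasiconcave} collapses to the numerical identity $\vol(K-K)=\binom{2n}{n}\vol(K)$; by Theorem~\ref{t:RS} this holds precisely when $K$ is a simplex. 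The converse is a direct computation, since under these three conditions both sides of \eqref{e:RS_omega_quasiconcave} equal $\binom{2n}{n}\|\phi\|_{\infty}\vol(K)$. The only delicate step is exactly this reduction of the equality analysis to the classical Rogers--Shephard equality case, which hinges on the observation that constancy of $\phi$ on $K-K+\omega_0$ automatically forces equality in the supremum step.
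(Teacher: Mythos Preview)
Your proof is correct and follows essentially the same route as the paper: both apply Corollary~\ref{c:|f|_x0} with $p=1/n$ and $x_0=\omega$ to the function $f(x)=\vol\bigl(K\cap(x-\omega+K)\bigr)$, then use the Fubini identity \eqref{e:proving_RS_fubini_2} and the $K\leftrightarrow -K$ symmetry. Your treatment of the equality case is in fact slightly more explicit than the paper's, which simply invokes the equality clause of Corollary~\ref{c:|f|_x0} and then Theorem~\ref{t:RS} without spelling out the intermediate reduction.
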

\begin{proof}
Let $\omega\in\R^n$ and consider the function
$f_\omega:K-K+\omega\longrightarrow[0,\infty)$ given by
\[
f_\omega(x)=\vol\bigl(K\cap(x-\omega+K)\bigr).
\]
Notice that, $f_\omega$ is $(1/n)$-concave by \eqref{e:B-M_ineq}, $\supp
f_\omega=K-K+\omega$ and, moreover, that
$\|f_\omega\|_{\infty}=f_\omega(\omega)=\vol(K)$. Then, using
\eqref{e:mu(suppf)_x0}, we get
\begin{equation*}
\begin{split}
\frac{\phi(\omega)}{\|\phi\|_{\infty}}\mu(K-K+\omega) &
    \leq\binom{2n}{n}\frac{1}{\vol(K)}\int_{\R^n}\vol\bigl(K\cap(x-\omega+K)\bigr)\,\dlat \mu(x)\\
 & =\binom{2n}{n}\frac{1}{\vol(K)}\int_{\R^n}\phi(x)\int_{\R^n}\chi_{_K}(y)\chi_{_{y+\omega-K}}(x)\,\dlat y\,\dlat x\\
 & =\binom{2n}{n}\frac{1}{\vol(K)}\int_{K}\mu(y+\omega-K)\,\dlat y\leq\binom{2n}{n}\sup_{y\in K}\mu(y+\omega-K).
\end{split}
\end{equation*}
Therefore, exchanging the roles of $K$ and $-K$,
\eqref{e:RS_omega_quasiconcave} infers.

Finally, if equality holds in \eqref{e:RS_omega_quasiconcave} for some
$\omega_0\in\R^n$ then, by Corollary~\ref{c:|f|_x0}, $\mu$ is a constant
multiple of the Lebesgue measure on $K-K+\omega_0$ and
$\phi(\omega_0)=\|\phi\|_{\infty}$. Now, from the equality case of Theorem
\ref{t:RS}, $K$ must be a simplex. The converse is immediate from Theorem
\ref{t:RS}.
\end{proof}

We conclude this section by noticing that, from the proof of the previous
result, one may also obtain \eqref{e:RS_measures_rad_decreasing} in the
slightly less general setting of quasi-concave densities with maximum at
the origin. We include it here for the sake of completeness.
\begin{corollary}\label{c:RS_measures}
Let $K\in\K^n$ and let $\mu$ be the measure on $\R^n$ given by
$\dlat\mu(x)=\phi(x)\,\dlat x$, where $\phi:\R^n\longrightarrow[0,\infty)$
is a quasi-concave function with $\|\phi\|_{\infty}=\phi(0)$. Then
\begin{equation*}\label{e:RS_measures}
\mu(K-K)\leq
\binom{2n}{n}\min\bigl\{\overline{\mu}(K),\overline{\mu}(-K)\bigr\}.
\end{equation*}
Moreover, if $\phi$ is continuous at the origin then equality holds if and
only if $\mu$ is a constant multiple of the Lebesgue measure on $K-K$ and
$K$ is a simplex.
\end{corollary}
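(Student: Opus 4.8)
The plan is to specialize the argument that proves Theorem~\ref{t:RS_omega_quasiconcave} to the case $\omega=0$, where the hypothesis $\|\phi\|_{\infty}=\phi(0)$ is exactly what makes the correction factor disappear. First I would introduce the function $f:\R^n\longrightarrow[0,\infty)$ given by $f(x)=\vol\bigl(K\cap(x+K)\bigr)$; as in the proof of Theorem~\ref{t:RS_measures_rad_decreasing}, the inclusion \eqref{eq:inclus} together with the Brunn-Minkowski inequality \eqref{e:B-M_ineq} shows that $f$ is $(1/n)$-concave, while clearly $\supp f=K-K$ and $\|f\|_{\infty}=f(0)=\vol(K)$.

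The heart of the matter is then to invoke Corollary~\ref{c:|f|_x0} with $x_0=0$ and $p=1/n$. Two bookkeeping checks are needed: that the relevant constant collapses to the Rogers-Shephard binomial, i.e.\ $\alpha^n_{1/n,0}=\binom{2n}{n}^{-1}$ (a direct evaluation of the Gamma-function formula defining $\alpha^n_{p,q}$), and that the prefactor $\phi(x_0)/\|\phi\|_{\infty}$ equals $1$ precisely by the assumption $\|\phi\|_{\infty}=\phi(0)$. With these in hand, \eqref{e:mu(suppf)_x0} reads
\[
\binom{2n}{n}^{-1}\mu(K-K)\leq\dfrac{1}{\vol(K)}\int_{K-K}f(x)\,\dlat\mu(x).
\]

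Next I would evaluate the right-hand integral by Fubini's theorem exactly as in \eqref{e:proving_RS_fubini}, writing $f(x)=\int_{\R^n}\chi_{_K}(y)\chi_{_{y-K}}(x)\,\dlat y$ and integrating in $x$ first, which yields $\int_K\mu(y-K)\,\dlat y=\vol(K)\,\overline{\mu}(-K)$. This gives $\mu(K-K)\leq\binom{2n}{n}\overline{\mu}(-K)$, and replacing $K$ by $-K$ (which leaves both $K-K$ and the hypotheses on $\phi$ unchanged) produces the companion bound with $\overline{\mu}(K)$; taking the minimum of the two yields the asserted inequality.

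For the equality case, since $\supp f=K-K$ is bounded and $\phi$ is assumed continuous at the origin, the equality clause of Corollary~\ref{c:|f|_x0} forces $\mu$ to be a constant multiple of the Lebesgue measure on $K-K$; once $\mu$ is Lebesgue there, the equality characterization of the classical Rogers-Shephard inequality (Theorem~\ref{t:RS}) identifies $K$ as a simplex, and the converse is immediate from Theorem~\ref{t:RS}. I do not expect any genuine obstacle here: essentially all the difficulty has already been absorbed into Corollary~\ref{c:|f|_x0}, so the only real care points are the constant identity $\alpha^n_{1/n,0}=\binom{2n}{n}^{-1}$ and keeping track of the orientation of $K$ versus $-K$ in the Fubini step.
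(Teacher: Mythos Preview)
Your proposal is correct and is precisely the argument the paper intends: the corollary is stated as an immediate specialization of the proof of Theorem~\ref{t:RS_omega_quasiconcave} to $\omega=0$, using Corollary~\ref{c:|f|_x0} with $x_0=0$, $p=1/n$, and then the Fubini identity \eqref{e:proving_RS_fubini} to obtain $\overline{\mu}(-K)$ (rather than passing to the supremum as in \eqref{e:RS_omega_quasiconcave}). The constant check $\alpha^n_{1/n,0}=\binom{2n}{n}^{-1}$ and the equality discussion via Corollary~\ref{c:|f|_x0} and Theorem~\ref{t:RS} match the paper's treatment.
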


\section{A remark for measures with $p$-concave densities, $p>0$}\label{s:remark}

As we have shown in Example \ref{r:hip_P_HK}, the assumption $P_HK\subset
K$ on Theorems~\ref{t:RS_secc_proy_K(0)} and \ref{t:RS_seccion_proy_quasi}
is necessary. However, when dealing with measures associated to
$p$-concave densities, $p>0$, an inequality in the spirit of
\eqref{e:RS_section_proy} can be obtained for an arbitrary $K\in\K^n$, by
setting a binomial coefficient according to the concavity nature of the
density. This is the content of the following result.

\begin{theorem}\label{t:RS_seccion_proy_p_conc}
Let $k\in\{1,\dots,n-1\}$, $r\in\N$ and $H\in\G(n,n-k)$. Given a
$(1/r)$-concave function $\phi_k:\R^k\longrightarrow[0,\infty)$, and a
radially decreasing function
$\phi_{n-k}:\R^{n-k}\longrightarrow[0,\infty)$, let
$\mu_n=\mu_{n-k}\times\mu_{k}$ be the product measure on $\R^n$ given by
$\dlat\mu_{n-k}(x)=\phi_{n-k}(x)\,\dlat x$ and
$\dlat\mu_{k}(y)=\phi_k(y)\,\dlat y$. Let $K\in\K^n$ be such that
$\max_{x\in H}
\mu_k\left(K\cap\bigl(x+H^{\bot}\bigr)\right)=\mu_k\left(K\cap
H^{\bot}\right)$. Then
\begin{equation*}
\mu_{n-k}\bigl(P_HK\bigr)\mu_k\bigl(K\cap
H^{\bot}\bigr)\leq\binom{n+r}{n-k}\mu_n(K).
\end{equation*}
\end{theorem}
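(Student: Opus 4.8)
The plan is to reduce the statement to the one--variable inequality \eqref{e:int(suppf_C_theta)_g_rad_dec} by integrating out the $H^{\bot}$--variable and tracking how the concavity of $\phi_k$ improves under this marginalisation. First I would fix coordinates so that $H=\R^{n-k}$ (variable $x$) and $H^{\bot}=\R^{k}$ (variable $y$), and define $f:H\longrightarrow[0,\infty)$ by
\[
f(x)=\int_{\R^{k}}\phi_k(y)\,\chi_{_K}(x,y)\,\dlat y=\mu_k\bigl(K\cap(x+H^{\bot})\bigr).
\]
Since $K$ is a convex body, $\supp f=P_HK$, and by the maximality hypothesis $\|f\|_{\infty}=f(0)=\mu_k\bigl(K\cap H^{\bot}\bigr)$. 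Moreover, Fubini's theorem gives $\int_{P_HK}\phi_{n-k}(x)f(x)\,\dlat x=\mu_n(K)$ and $\int_{P_HK}\phi_{n-k}(x)\,\dlat x=\mu_{n-k}(P_HK)$, so the target inequality is exactly
\[
\mu_{n-k}(P_HK)\leq\binom{n+r}{n-k}\,\frac{\mu_n(K)}{\|f\|_{\infty}}.
\]

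The key step --- and the one carrying the improved constant --- is to determine the concavity of $f$. The function $(x,y)\mapsto\phi_k(y)\chi_{_K}(x,y)$ is $(1/r)$--concave on $\R^{n}$: on $K$ it coincides with $\phi_k(y)$, which is $(1/r)$--concave, while outside $K$ it vanishes, and convexity of $K$ guarantees that a convex combination of two points of $K$ again lies in $K$ (so the $p$--mean inequality is never violated). Integrating out the $k$ coordinates of $H^{\bot}$ and invoking the Borell--Brascamp--Lieb inequality (see e.g. \cite[Theorem~10.1]{G}), the marginal $f$ is $q$--concave with
\[
q=\frac{1/r}{1+k(1/r)}=\frac{1}{r+k}.
\]
Since $r\in\N$ and $k\geq 1$, this is $(1/(r+k))$--concavity with $r+k$ a positive integer, precisely the regime required by Proposition~\ref{l:integral ineqs}.

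Finally I would apply \eqref{e:int(suppf_C_theta)_g_rad_dec} on $\R^{n-k}$ with Lebesgue measure (whose density is the constant $1$, trivially quasi-concave with maximum at the origin), the radially decreasing function $g=\phi_{n-k}$, and the $(1/(r+k))$--concave function $f$ satisfying $\|f\|_{\infty}=f(0)$. This yields
\[
\alpha^{n-k}_{1/(r+k),0}\int_{P_HK}\phi_{n-k}(x)\,\dlat x
\leq\frac{1}{\|f\|_{\infty}}\int_{P_HK}\phi_{n-k}(x)\,f(x)\,\dlat x.
\]
Using the closed form of the constant, a direct Gamma--function computation gives
\[
\alpha^{n-k}_{1/(r+k),0}=\frac{\Gamma(r+k+1)\,\Gamma(n-k+1)}{\Gamma(n+r+1)}=\binom{n+r}{n-k}^{-1}.
\]
Substituting the three identifications $\int_{P_HK}\phi_{n-k}\,\dlat x=\mu_{n-k}(P_HK)$, $\int_{P_HK}\phi_{n-k}f\,\dlat x=\mu_n(K)$ and $\|f\|_{\infty}=\mu_k(K\cap H^{\bot})$, and then multiplying through by $\binom{n+r}{n-k}\,\mu_k(K\cap H^{\bot})$, gives the claimed inequality. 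I expect the main obstacle to be the concavity bookkeeping of the middle paragraph: one must verify that $\phi_k\chi_{_K}$ is genuinely $(1/r)$--concave jointly in $(x,y)$ (rather than merely concave in each block separately) so that Borell--Brascamp--Lieb applies with exponent $1/r$, and that the resulting exponent $1/(r+k)$ is exactly the one that matches the binomial $\binom{n+r}{n-k}$ through the Gamma--function identity; note also that the formal value $r=0$ recovers the constant $\binom{n}{k}$ and the $(1/k)$--concavity appearing in Theorem~\ref{t:RS_secc_proy_K(0)}, which is a useful consistency check.
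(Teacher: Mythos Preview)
Your proof is correct and follows essentially the same route as the paper's own argument: define $f(x)=\mu_k\bigl(K\cap(x+H^{\bot})\bigr)$ on $H$, use Borell--Brascamp--Lieb to deduce that $f$ is $\bigl(1/(k+r)\bigr)$-concave with $\|f\|_{\infty}=f(0)$, and then apply \eqref{e:int(suppf_C_theta)_g_rad_dec} with $g=\phi_{n-k}$ and the Lebesgue measure. The only cosmetic difference is that the paper phrases the concavity step as ``$\mu_k$ is $\bigl(1/(k+r)\bigr)$-concave, hence so is $x\mapsto\mu_k(K(x))$ by convexity of $K$'', whereas you argue that $(x,y)\mapsto\phi_k(y)\chi_{_K}(x,y)$ is $(1/r)$-concave and marginalise---these are the same use of Borell--Brascamp--Lieb.
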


\begin{proof}
Consider the function $f:H\longrightarrow\R$ given by
\[
f(x)=\mu_k\left(K\cap\bigl(x+H^{\bot}\bigr)\right),
\]
which satisfies $\supp f=P_HK$.

Now, the Borell-Brascamp-Lieb inequality (see \cite[Theorem 10.1]{G})
implies that $\mu_k$ is ($1/(k+r)$)-concave which, together with the
convexity of $K$, yields that $f$ is ($1/(k+r)$)-concave. Furthermore, by
assumption we have that $\|f\|_{\infty}=f(0)$. Thus, using
\eqref{e:int(suppf_C_theta)_g_rad_dec} for $g=\phi_{n-k}$, we obtain
\[
\alpha^{n-k}_{1/(k+r),0}\int_{P_HK}\phi_{n-k}(x)\,\dlat
x\leq\dfrac{1}{\mu_k\bigl(K\cap H^{\bot}\bigr)} \int_{P_HK}
\mu_k\left(K\cap\bigl(x+H^{\bot}\bigr)\right)\,\phi_{n-k}(x)\dlat x
\]
and hence
\begin{equation*}
\mu_{n-k}\bigl(P_HK\bigr)\mu_k\bigl(K\cap
H^{\bot}\bigr)\leq\binom{n+r}{n-k}\mu_n(K),
\end{equation*}
as desired.
\end{proof}

The latter result can be stated for any positive real number $r$, just
replacing $\binom{n+r}{n-k}$ by the suitable constant.

We notice that the above inequality includes \eqref{e:RS_section_proy} as
a special case, since the constant density (of the Lebesgue measure) is
$\infty$-concave, and thus $r=0$.

\medskip

\noindent {\it Acknowledgements.} We thank the referees for many valuable
suggestions and remarks which have allowed us to considerably improve the
manuscript.

%


\begin{thebibliography}{99}

\bibitem{AAGJV} Alonso-Guti\'errez, D., Artstein-Avidan, S., Gonz\'alez,
B., Jim\'enez, C. H. and Villa, R. ``Rogers-Shephard and local
Loomis-Whitney type inequalities.'' {\it Submitted},
\href{https://arxiv.org/abs/1706.01499v2}{arXiv:1706.01499v2}.

\bibitem{AlGMJV} Alonso-Guti\'errez, D., Gonz\'alez, B., Jim\'enez,
C. H. and Villa, R. ``Rogers-Shephard inequality for log-concave
functions.'' {\it J. Func. Anal.} 271, no. 11 (2016): 3269--3299.

\bibitem{AGM} Artstein-Avidan, S., Giannopoulos, A. and  Milman, V. D.
{\it Asymptotic geometric analysis. Part I}. Mathematical Surveys and
Monographs, 202. Providence, RI: American Mathematical Society, 2015.

\bibitem{AKM} Artstein-Avidan, S., Klartag, B. and Milman, V. ``The
Santal\'o point of a function, and a functional form of the Santal\'o
inequality.'' {\it Mathematika} 51 (2004): 33--48.

\bibitem{Ba1} Ball, K. {\it Isometric problems in $\ell_p$ and sections of
convex sets}. PhD dissertation. Cambridge: 1986.

\bibitem{Ba2} Ball, K. ``Logarithmically concave functions and sections of
convex sets in $\R^n$.'' {\it Studia Math.} 88, no. 1 (1988): 69--84.

\bibitem{Borell1} Borell, C. ``Convex measures on locally convex spaces.''
{\it Ark. Mat.} 12 (1974): 239--252.

\bibitem{Borell} Borell, C. ``Convex set functions in $d$-space.'' {\it Period.
Math. Hungar.} 6 (1975): 111--136.

\bibitem{B2} Borell, C. ``The Brunn-Minkowski inequality in Gauss space.''
{\it Invent. Math.}  30, no. 2 (1975): 207--216.

\bibitem{B3} Borell, C. ``The Ehrhard inequality.''  {\it C. R. Math. Acad.
Sci. Paris} 337, no. 10 (2003): 663--666.

\bibitem{BL} Brascamp, H. J. and Lieb, E. H. ``On extensions of the
Brunn-Minkowski and Pr\'ekopa-Leindler theorems, including inequalities
for log concave functions and with an application to the diffusion
equation.'' {\it  J. Func. Anal.} 22, no. 4 (1976): 366--389.

\bibitem{Co} Colesanti, A. ``Functional inequalities related to the
Rogers-Shephard inequality.'' {\it Mathematika} 53 (2006): 81--101.

\bibitem{E1} Ehrhard, E. ``Sym\'etrisation dans l'espace de Gauss.'' {\it
Math. Scand.} 53 (1983): 281--301.

\bibitem{E2} Ehrhard, E. ``\'Elements extr\'emaux pours les in\'egalit\'es
de Brunn-Minkowski gaussienes.'' {\it Ann. Inst. H. Poincar\'e Probab.
Statist.} 22 (1986): 149--168.

\bibitem{FM} Fradelizi, M. and Meyer, M. ``Some functional forms of
Blaschke-Santal\'o inequality.'' {\it Math. Z.} 256 (2007):
379--395.

\bibitem{G} Gardner, R. J. ``The Brunn-Minkowski inequality.'' {\it Bull. Amer.
Math. Soc.} 39, no. 3 (2002): 355--405.

\bibitem{Ga} Gardner, R. J. {\it  Geometric tomography}, 2nd ed.
Encyclopedia of Mathematics and its Applications, 58. Cambridge: Cambridge
University Press, 2006.

\bibitem{GaZv} Gardner, R. J. and Zvavitch, A. ``Gaussian Brunn-Minkowski
inequalities.'' {\it Trans. Amer. Math. Soc.} 362, no. 10 (2010):
5333--5353.

\bibitem{KK} Klartag, B. and Koldobsky, K. ``An example related to the
slicing inequality for general measures.'' {\it J. Funct. Analysis} 274,
no. 7 (2018): 2089--2112.

\bibitem{KLi} Klartag, B. and Livshyts, G. ``The lower bound for Koldobsky's
slicing inequality via random rounding.'' {\it Submitted},
\href{https://arxiv.org/abs/1810.06189}{arXiv:1810.06189}.

\bibitem{KM} Klartag, B. and Milman, V. ``Geometry of log-concave functions
and measures.'' {\it Geom. Dedicata} 112 (2005): 169--182.

\bibitem{Kol} Koldobsky, A. ``Slicing inequalities for measures of convex bodies.''
{\it Adv. Math.} 283 (2015): 473--488.

\bibitem{KoZ} Koldobsky, A. and  Zvavitch, A. ``An isomorphic version of
the Busemann-Petty problem for arbitrary measures.'' {\it Geom. Dedicata}
174 (2015): 261--277.

\bibitem{Leindler} Leindler, L. ``On certain converse of H\"older's
inequality II.'' {\it Acta Sci. Math. (Szeged)} 33 (1972): 217--223.

\bibitem{Liv} Livshyts, G. ``An extension of Minkowski's theorem and its
applications to questions about projections for measures.'' {\it To appear
in Adv. Math.}

\bibitem{LiMaNaZv} Livshyts, G., Marsiglietti, A., Nayar, P. and Zvavitch,
A. ``On the Brunn-Minkowski inequality for general measures with
applications to new isoperimetric-type inequalities.'' {\it Trans. Amer.
Math. Soc.} 369, no. 12 (2017): 8725--8742.

\bibitem{Mar} Marsiglietti, A. ``On the improvement of concavity of convex
measures.'' {\it Proc. Amer. Math. Soc.} 144, no. 2 (2016): 775--786.

\bibitem{MNRY} Meyer, M., Nazarov, F., Ryabogin, D. and Yaskin, V.
``Gr\"unbaum-type inequality for log-concave functions.'' {\it Bull. Lond.
Math. Soc.} 50 (2018): 745--752.

\bibitem{MSZ} Myroshnychenko, S., Stephen, M. and Zhang, N.
``Gr\"unbaum's inequality for sections.'' {\it J. Funct. Anal.} 275
(2018): 2516--2537.

\bibitem{NaTk} Nayar, P. and Tkocz, T. ``A note on a Brunn-Minkowski inequality
for the Gaussian measure.'' {\it Proc. Amer. Math. Soc.} 141 (2013):
4027--4030.

\bibitem{Prekopa} Pr\'ekopa, A. ``Logarithmic concave measures with
application to stochastic programming.'' {\it Acta Sci. Math. (Szeged)} 32
(1971): 301--315.

\bibitem{RYN} Ritor\'e, M. and Yepes Nicol\'as, J. ``Brunn-Minkowski inequalities
in product metric measure spaces.'' {\it Adv. Math.} 325 (2018): 824--863.

\bibitem{RoWe} Rockafellar, R. T. and Wets, R. J.-B. {\it Variational
analysis}. Grundlehren der Mathematischen Wissenschaften [Fundamental
Principles of Mathematical Sciences], 317. Berlin: Springer-Verlag, 1998.

\bibitem{RS1} Rogers, C. A. and Shephard, G. C. ``The difference body of a convex
body.'' {\it Arch. Math.} 8 (1957): 220--233.

\bibitem{RS2} Rogers, C. A. and Shephard, G. C. ``Convex bodies associated with
a given convex body.'' {\it J. Lond. Math. Soc.} 1, no. 3 (1958):
270--281.

\bibitem{Sch} Schneider, R. {\it Convex bodies: The Brunn-Minkowski
theory}, 2nd expanded ed. Encyclopedia of Mathematics and its
Applications, 151. Cambridge: Cambridge University Press, 2014.

\bibitem{ST} Sudakov, V. N. and Cirel'son, B. S. ``Extremal properties of
half-spaces for spherically invariant measures.'' Problems in the theory
of probability distributions, II. {\it Zap. Nau\v{c}n. Sem. Leningrad.
Otdel. Mat. Inst. Steklov. (LOMI)} 41 (1974): 14--24, 165.

\bibitem{Z} Zvavitch, A. ``The Busemann-Petty problem for arbitrary
measures.'' {\it Math. Ann.} 331, no. 4 (2005): 867--887.

\end{thebibliography}
\end{document}